%%%%%%%%%%%%%%%%%%%%%%%%%%%%%%%%%%%%%%%%%%%%%%%%%%
%% Diagonal complexes and the integral homology %%
%%        of the automorphism group of          %%
%%                a free product                %%
%%                                              %%
%%                James Griffin                 %%
%%%%%%%%%%%%%%%%%%%%%%%%%%%%%%%%%%%%%%%%%%%%%%%%%%
\documentclass[oneside]{amsart}
\usepackage{amssymb, amsmath, amsthm}
\usepackage{graphicx}
\usepackage[all]{xy}
\xyoption{arc}
\CompileMatrices

% generic commands
\newcommand{\comment}[1]{}
\newcommand{\smallcaps}[1]{\textrm{\textsc{#1}}}
\newcommand{\brac}[1]{\left(#1\right)}
\newcommand{\bbrac}[1]{\bigl(#1\bigr)} 
\newcommand{\Brac}[1]{\Bigl(#1\Bigr)}

\newcommand{\sbrac}[1]{\left[#1\right]}

\newcommand{\cbrac}[1]{{\left\{#1\right\}}}
\newcommand{\cbbrac}[1]{{\bigl\{#1\bigr\}}}
\newcommand{\mbrac}[1]{\mid\!\!#1\!\!\mid}

\newcommand{\NN}{\mathbb{N}}

\newcommand{\ZZ}{\mathbb{Z}}

\newcommand{\ttt}{[\![t]\!]}

\newcommand{\sym}{\mathfrak{S}}
\newcommand{\symn}{\sym_n}
\newcommand{\symG}{\sym_\bdG}
\newcommand{\dt}{\bullet}
\newcommand{\clc}{,\ldots,}
\newcommand{\tlt}{\times\ldots\times}
\newcommand{\ala}{\ast\ldots\ast}

\newcommand{\xytree}[1]{\vcenter{\xymatrix@C=1pc@R=1pc{#1}}}
\newcommand{\smtree}[1]{\vcenter{\xymatrix@C=0.6pc@R=0.6pc{#1}}}

% paper specific commands
\newcommand{\ed}[1]{\overrightarrow{#1}}
\newcommand{\Forn}{{\mathcal{F}_n}}
\newcommand{\gf}{\gamma_{\Forn}}
\newcommand{\Gf}{{\Gamma_{\!\Forn}}}
\newcommand{\FR}{\smallcaps{FR}}
\newcommand{\SFR}{\Sigma\FR}
\newcommand{\Wh}{\smallcaps{Wh}}
\newcommand{\SWh}{\Sigma\Wh}
\newcommand{\SAut}{\Sigma\smallcaps{Aut}}
\newcommand{\PAut}{\smallcaps{PAut}}
\newcommand{\Pf}{P_f}
\newcommand{\Gam}[1]{(\Gamma_{#1},\gamma_{#1})}
\newcommand{\Gaml}[1]{(\Gamma_{#1},\gamma_{#1},l)}
\newcommand{\bG}{\mathbf{\Gamma}}
\newcommand{\alphaij}{\alpha_i^{g_j}}
\newcommand{\bdG}{\mathbf{G}}
\newcommand{\bdY}{\mathbf{Y}}
\newcommand{\bdH}{\mathbf{H}}
\newcommand{\YGam}[1]{\bdY\Gam{#1}}
\newcommand{\YpGam}[1]{\bdY'\Gam{#1}}
\newcommand{\GGam}[1]{\bdG\Gam{#1}}

\newcommand{\YGf}{\mathbf{Y}\Gf}
\newcommand{\rC}{\widehat{C}_\ast}
\newcommand{\rcC}{\widehat{C}^\ast}

\newcommand{\FZ}{\smallcaps{Forests}_Z}
\newcommand{\out}[1]{{\smallcaps{out}(#1)}}

\newcommand{\cat}[1]{\mathcal{#1}}
\newcommand{\edij}{\ed{ij}}
\newcommand{\edik}{\ed{ik}}

\newcommand{\cUi}{{\cbrac{U_i}}}
\newcommand{\cVj}{{\cbrac{V_j}}}
\newcommand{\cWk}{{\cbrac{W_k}}}
\newcommand{\bAut}{\textbf{Aut}}
\newcommand{\bInn}{\textbf{Inn}}
\newcommand{\MY}{\mathcal{M}\bdY}

\newcommand{\HH}{\mathcal{H}}
\newcommand{\hh}{\mathfrak{H}}
\newcommand{\UU}{\mathfrak{U}}
\newcommand{\Xs}{X^{\text{simp}}}
\newcommand{\XsG}[1]{X^{\text{simp}}(#1)\uparrow^G}

\DeclareMathOperator{\Aut}{Aut}

\DeclareMathOperator{\im}{Im}
\DeclareMathOperator{\Inn}{Inn}

\DeclareMathOperator{\Tor}{Tor}
\DeclareMathOperator{\colim}{colim}

\theoremstyle{plain}
\newtheorem{theorem}{Theorem}[section]
\newtheorem{lemma}[theorem]{Lemma}
\newtheorem{proposition}[theorem]{Proposition}
\newtheorem{corollary}[theorem]{Corollary}
\newtheorem{mainthm}{Theorem}

\newtheorem{question}{Question}

\theoremstyle{definition}
\newtheorem{definition}{Definition}
\newtheorem{example}{Example}[section]

\theoremstyle{remark}
\newtheorem{remark}{Remark}[section]

\numberwithin{equation}{subsection}

\title[Diagonal complexes and $H_\ast(\Aut(G_1\ala G_n), R)$]{Diagonal complexes and the integral homology of the automorphism group of a free product}
\author{James Griffin}

%\classno{20E36 (primary) 20E06, 20J06, 57M07 (secondary)}

\begin{document}

\begin{abstract}
The main goal of this paper is a calculation of the integral\linebreak (co)homology of the group of symmetric automorphisms of a free product.
We proceed by giving a geometric interpretation of symmetric automorphisms via a moduli space of certain diagrams, which we name cactus products.
To describe this moduli space a theory of diagonal complexes is introduced.
This offers a generalisation of the theory of right-angled Artin groups in that each diagonal complex defines what we call a diagonal right-angled Artin group (DRAAG).
\end{abstract}

\maketitle

\section*{Introduction}
This paper studies the group $\SAut(G)$ of symmetric automorphisms of a free product of groups $G=G_1\ala G_n$.  
At the heart of the paper is a certain moduli space which offers an intuitive and geometric interpretation of the symmetric automorphisms.
The combinatorial backbone is provided by the poset of `forest posets'; it will parametrize the moduli spaces, present the automorphism groups and index their homology, the structure of which is our main result.
The combinatorial arguments consist of only short lemmas concerning this poset.

In many cases, for example if each $G_i$ is finite, every automorphism is symmetric and so our results apply to the whole automorphism group.
However symmetric automorphisms are also of independent interest.
For instance the group $\SAut(\ZZ^{\ast n})$ is both a proper subgroup of the automorphism group of a free group and also the fundamental group of the configuration space of $n$ unknotted, unlinked loops embedded into 3-space, see for instance~\cite{Brendle2010}.
Under this latter realisation it has been called the \emph{loop braid group} and has found applications in the field of mathematical physics~\cite{Baez2007}.
Future occurrences of symmetric automorphism groups in the study of configuration spaces are to be expected.

Let $\bdG=(G_1\clc G_n)$ be an $n$-tuple of groups and let $G=G_1\ala G_n$ be their free product.
Examples of automorphisms of this group are given by
\begin{itemize}
\item[(a)] a permutation of isomorphic factors, 
\item[(b)] an inner automorphism of an individual factor,
\item[(c)] more generally any automorphism of an individual factor and 
\item[(d)] a partial conjugation, where for distinct $i,j$ and some $g_j\in G_j$, elements $g\in G_i$ are taken to $g^{g_j}$, whilst factors $G_k$ are fixed for $k\neq i$.
\end{itemize}
Choosing different combinations of these automorphisms generates different groups.
\begin{itemize}
\item The group of automorphisms generated by (a), (b), (c) and (d) is called the \emph{symmetric automorphism group} $\SAut(G)$.
When each $G_i$ is freely indecomposable and is not isomorphic to $\ZZ$ then $\SAut(G)$ is the whole automorphism group.
\item The permutations (a) generate $\symG$, a subgroup of the symmetric group~$\symn$.
\item The Fouxe-Rabinovitch group $\FR(G)$ is the subgroup generated by automorphisms of type~(d).
\item The Whitehead automorphism group $\Wh(G)$ is the subgroup generated by automorphisms of types (b) and (d).
\item The pure automorphism group $\PAut(G)$ is the subgroup generated by automorphisms of types (c) and (d).
\end{itemize}
A presentation for the full automorphism group including the case with infinite cyclic factors was given by Fouxe-Rabinovitch~\cite{Fouxe1940}, see also~\cite{Gilbert1987} for a more concise set of defining relations.
It was shown that $\FR(F_n)=\Wh(F_n)$ was of cohomological dimension $n-1$ in~\cite{Collins1989} using a symmetric analogue of Outer space.
In~\cite{Jensen2006} the integral cohomology ring of $\FR(F_n)$ was computed, confirming a conjecture made by Brownstein and Lee~\cite{Brownstein1993}.
They used an action of the group $\Wh(F_n)/\Inn(F_n)$ on the McCullough-Miller complex introduced in~\cite{McCullough1996} to define a spectral sequence calculating the cohomology, which they then show to collapse.
Using this same complex attempts have recently been made to calculate the Euler characteristic~\cite{Jensen2007euler} and the cohomology over a field~\cite{Berkove2010} of $\Wh(G)$.
We discuss their results in Remark~\ref{rem_discrepancy}.

Our approach proceeds by describing a space $\MY$ built functorially out of an $n$-tuple of pointed spaces $\bdY=(Y_1\clc Y_n)$.
We offer two constructions of the space $\MY$, one is an intuitive description as the moduli space of certain cactus products and the action of its fundamental group on $G$ is easily visualised.
The other description is purely combinatorial and this involves the development of a theory generalising graph products of groups.
The graphs are replaced by objects called `diagonal complexes' and the corresponding product is referred to as a `diagonal complex product'.
The diagonal complex $\Gf$ for the space $\MY$ is given by the poset of `forest posets'.

Using the combinatorial description we are able to prove that the fundamental group $\pi_1(\MY)$ is isomorphic to the Fouxe-Rabinovitch group with $G_i=\pi_1(Y_i)$.
Recall that a pointed space $X$ is aspherical if the homotopy groups $\pi_j(X)$ disappear for $j>1$.
We prove our first main theorem by interpreting a based version of McCullough-Miller space described in~\cite{Chen2005} as a coset complex.
\begin{mainthm}
Let $\bdY=(Y_1\clc Y_n)$ be aspherical pointed spaces.
Then the space $\MY$ is also aspherical.
\end{mainthm}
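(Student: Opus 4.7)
The plan is to build a classifying space for $\FR(G)=\pi_1(\MY)$ by letting $\FR(G)$ act on the based McCullough-Miller complex of~\cite{Chen2005}, interpreted as a coset complex, and then to identify the result with $\MY$ itself. The idea is that $\widetilde{\MY}$ admits a stratification by aspherical pieces indexed by $\Gf$, and the combinatorial nerve of this stratification will turn out to be (a based version of) McCullough-Miller space, which will then be seen to be contractible.

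First I would use the diagonal complex product description of $\MY$ to stratify it: each cell of the diagonal complex $\Gf$ labels a stratum built from the pointed spaces $Y_i$ via the products and wedges intrinsic to the cactus-product construction, and hence is aspherical whenever the $Y_i$ are. Each stratum stabiliser in $\FR(G)$ will be a free product of some of the $G_i$; these subgroups will play the role of coset subgroups. Second, I would recognise the based McCullough-Miller complex from~\cite{Chen2005} as precisely the coset complex of $\FR(G)$ against this system of stratum stabilisers; contractibility then follows via the standard poset/nerve argument for coset complexes indexed by $\Gf$.

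Third, I would lift the stratification to the universal cover $\widetilde{\MY}$ and collapse each (aspherical) lifted stratum to a point. Provided the stratification is cellular and its intersection pattern reflects the face relations of $\Gf$, this collapse realises a $\FR(G)$-equivariant homotopy equivalence from $\widetilde{\MY}$ onto the coset complex, by a standard homotopy-colimit/nerve-theorem argument. Contractibility of the target then yields contractibility of $\widetilde{\MY}$, and hence asphericity of $\MY$.

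The main obstacle I expect is the bookkeeping in this final step: one must verify that the stratification of $\MY$ is sufficiently well-behaved (good open thickenings, cofibrant inclusions, correct intersection pattern) for the nerve argument to apply, and match the combinatorics of forest posets in $\Gf$ on the nose with Chen's description of based McCullough-Miller space. A secondary but more routine check is that individual strata really are aspherical --- not merely possess the expected $\pi_1$ --- which should follow from writing each stratum itself as a homotopy colimit of products of the $Y_i$.
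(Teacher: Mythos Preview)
Your overall strategy matches the paper's: both reduce the asphericity of $\MY\cong\bdY\Gf$ to the contractibility of Chen's based McCullough--Miller space $L(G)$, reinterpreted as the coset complex for a family of subgroups of $\FR(G)$ indexed by the category $\bGf$. The paper packages your ``lift to the universal cover and collapse contractible strata'' step as a general result (Theorem~\ref{thm_cosetcomplex}, following Abels--Holz~\cite{Abels1993}): a colimit $\colim_{H\in\HH}B(H)$ is a $K(G,1)$ if and only if the coset complex $N(\hh,\subseteq)$ is contractible. Your nerve/homotopy-colimit argument is essentially an alternative proof of that same statement and should go through once the pieces in the universal cover are identified with contractible spaces indexed by cosets.

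There is, however, one concrete error. You say the strata are built from ``products and wedges'' and that the stratum stabilisers are \emph{free} products of some $G_i$. In fact each stratum of the diagonal complex product is simply a direct product $\prod_{U'\in\gamma(U)}Y_{l(U')}$ (no wedges appear), and the corresponding subgroup of $\FR(G)$ is the \emph{direct} product $\prod_{U'\in\gamma(U)}G_{l(U')}$. This matters in two places. First, asphericity of each stratum is then immediate (a finite product of aspherical spaces is aspherical), so your ``secondary check'' is trivial rather than requiring a homotopy-colimit argument. Second, it is precisely the cosets of these direct-product subgroups that form Chen's poset of marked based bipartite forests; with free-product subgroups the identification with $L(G)$ would simply be false, and the argument would collapse.

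You are right that the bookkeeping in matching the coset complex to Chen's description is the main work. The paper carries this out by closing $\gf(\Gf)$ under meets inside the partial-partition lattice to obtain the poset of \emph{bipartite} planted forests with the folding order (Section~5.2), and then observes that Chen's marked bipartite planted forests are, by definition, exactly the cosets of the associated direct-product subgroups.
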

Calculating the homology of diagonal complex products is quite simple.  Using this we are able to prove our second main theorem.
\begin{mainthm}
Let $\bdG$ be an $n$-tuple of groups and $G$ be the $n$-fold free product of these groups.
Then 
\begin{equation*}H_\ast(\FR(G),R) \cong H_\ast(G^{n-1},R),\end{equation*}
where $G^{n-1}$ is the $(n-1)$-fold direct product of $G$.
This also holds for the cohomology.
\end{mainthm}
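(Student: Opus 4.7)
The strategy has two parts: turn the left-hand side into a topological homology calculation, then compute and match.

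First, I would choose pointed aspherical models $Y_i = K(G_i, 1)$ for each $i$. By Theorem~A the resulting space $\MY$ is aspherical, and together with the already-announced identification $\pi_1(\MY) \cong \FR(G)$, this makes $\MY$ a classifying space for $\FR(G)$. Hence $H_\ast(\FR(G), R) \cong H_\ast(\MY, R)$ for any coefficient ring $R$, reducing the theorem to computing $H_\ast(\MY, R)$.

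Second, I would exploit the combinatorial description of $\MY$ as the diagonal complex product built over the forest poset $\Gf$. The paper explicitly promises that the homology of a diagonal complex product is simple to compute, so I would expect a Künneth-type splitting indexed by the cells of $\Gf$. The anticipated shape of the answer is a direct sum over decorated forest posets of tensor products of reduced factor homologies,
\[
H_\ast(\MY, R) \;\cong\; \bigoplus_{F,\ell} \; \bigotimes_{v\in F} \tilde H_\ast(G_{\ell(v)}, R),
\]
where $\ell$ records which factor $G_i$ is attached at each vertex of the underlying labeled forest $F$. In parallel, I would expand $H_\ast(G^{n-1}, R)$ via the Künneth formula together with the free-product splitting $H_\ast(G, R) \cong R \oplus \bigoplus_i \tilde H_\ast(G_i, R)$ (in positive degrees), producing a sum indexed by pairs consisting of a subset $S \subseteq \{1,\ldots,n-1\}$ and a labeling $S \to \{1,\ldots,n\}$ of analogous tensor products.

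The proof then reduces to a combinatorial and algebraic identification of these two indexing sets: produce a natural bijection between labeled forest posets on $n$ vertices and labeled subsets of $\{1,\ldots,n-1\}$ that matches the grading and the tensor-product structure. A Prüfer-type tree encoding is the natural candidate. The cohomology statement should follow by dualising the argument or via universal coefficients.

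The main obstacle is this matching step, and crucially making it work at chain level so the isomorphism holds for arbitrary $R$. Naively comparing both sides via Künneth over a general ring introduces $\Tor$ terms which must be organised to mirror each other on both sides; the cleanest resolution is likely to exhibit a chain equivalence between the cellular chain complex of $\MY$ and a tensor-product chain complex computing $H_\ast(G^{n-1}, R)$, after which all coefficient subtleties disappear.
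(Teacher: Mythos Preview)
Your strategy is exactly the paper's: reduce via Theorem~A to $H_\ast(\bdY\Gf)$, apply the diagonal-complex homology decomposition (Theorem~\ref{thm_Hdc}), then match with $G^{n-1}$ via Pr\"ufer codes. One correction to your indexing: the sum is over planted forests on the \emph{fixed} vertex set $\sbrac{n}$ with no auxiliary labelling $\ell$, and the tensor runs over the \emph{edges} of the forest, the edge $\ed{ij}$ contributing a factor $\rC(Y_i)$; the resulting monomial $\prod_i x_i^{\out{i}}$ is precisely what the Pr\"ufer bijection matches to functions $S\to\sbrac{n}$ on subsets $S\subseteq\sbrac{n-1}$. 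Your $\Tor$ concern is legitimate but the paper sidesteps any bespoke chain-level comparison via Corollary~\ref{cor_hilb}, which records the decomposition as a Hilbert--Poincar\'e series in a ring whose multiplication is $\Tor^R$; once the polynomial identity $h_{\Forn}=(1+x_1+\cdots+x_n)^{n-1}$ is established, the module isomorphism over any $R$ follows immediately.
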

Our final main theorem uses a symmetric group action on $\Gf$ and a careful analysis of the action of $\Aut(G_i)$ on $\MY$ to give a calculation of the integral homology of $\Aut(G)$.
\begin{mainthm}
Let $G=G_1\ala G_n$ where $G_i$ is neither freely decomposable nor infinite cyclic.  Then the homology of the automorphism group of $G$ is given by
\[
H_\ast\bigl(\Aut(G),R\bigr) \cong H_\ast\bigl(\bAut(\bdG)\!\rtimes\!\symG,R\bigr)\oplus\!\!\!
\bigoplus_{f\in\smallcaps{Forests}_Z}\!\!\! H_\ast\brac{\bAut(\bdG)\!\rtimes\!\Aut(f), \rC(f)},
\]
where $\FZ$ is the set of $Z$-coloured planted forests and $\rC(f)$ is given by
\[ \bigotimes_{i\in\sbrac{n}} \rC(Y_{l(i)},R)^{\otimes \out{i}},\]
where $Y_i$ is a classifying space for $G_i$ for each $i=1\clc n$.
\end{mainthm}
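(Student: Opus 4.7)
The plan is to promote $\MY$ from a classifying space for $\FR(G)$ to an equivariant model for $B\Aut(G)$, and then read off the homology from the orbits of cells in the diagonal complex $\Gf$.

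Under the standing hypotheses that each $G_i$ is freely indecomposable and not infinite cyclic, every automorphism of $G$ is symmetric and $\FR(G)$ is normal in $\Aut(G)$ with a natural complement, so that
\[\Aut(G) \;\cong\; \FR(G) \rtimes \bbrac{\bAut(\bdG) \rtimes \symG},\]
where $\bAut(\bdG)=\Aut(G_1)\times\cdots\times\Aut(G_n)$. Writing $H=\bAut(\bdG)\rtimes\symG$ and choosing each $Y_i$ to be a classifying space for $G_i$, the functorial construction of $\MY$ lets $H$ act on it: the factors $\Aut(G_i)$ act via functoriality in $Y_i$, while $\symG$ permutes the tuple $\bdY$. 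Since $\MY$ is a $K(\FR(G),1)$ by Theorem A, the Borel construction $EH\times_H\MY$ is a $K(\Aut(G),1)$, and hence $H_\ast(\Aut(G),R)$ is the equivariant hyperhomology $H_\ast(H, C_\ast(\MY,R))$.

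The diagonal complex product construction equips $\MY$ with an $H$-equivariant cellular decomposition whose cells are indexed by the cells of $\Gf$, that is, by forest posets. The cell corresponding to a forest $f$ has reduced chain complex $\rC(f)=\bigotimes_i\rC(Y_{l(i)})^{\otimes\out{i}}$ by definition of the diagonal complex product; the symmetric group $\symG$ permutes these cells with orbits indexed precisely by $\FZ$ together with the empty forest, and the stabilizer of $f$ in $\symG$ is exactly $\Aut(f)$. The other factor $\bAut(\bdG)$ stabilizes every cell, acting on $\rC(f)$ by the tensor product of the factorwise actions of the $\Aut(G_i)$ on $\rC(Y_i,R)$. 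The unique zero-cell, corresponding to the empty forest, has stabilizer all of $H$ and chain module $R$.

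Combining these observations, the cellular chain complex decomposes as an $H$-module as
\[C_\ast(\MY,R) \;\cong\; R \;\oplus\; \bigoplus_{f\in\FZ} R\sbrac{H \bigm/ \bbrac{\bAut(\bdG)\rtimes\Aut(f)}} \otimes \rC(f),\]
and a summand-by-summand application of Shapiro's lemma to $H_\ast(H, C_\ast(\MY,R))$ yields the stated identification. The principal obstacle is the bookkeeping in the third step: one must verify that the cells produced by the diagonal complex product really do carry the advertised $H$-action, that $\symG$ permutes them with exactly the claimed orbit structure and stabilizers, and that $\bAut(\bdG)$ acts on each $\rC(f)$ by the tensor-product formula. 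This should reduce to combinatorial assertions about the $\symG$-action on the poset of forest posets together with the functoriality of the diagonal complex product in the tuple $\bdY$, both of which are expected to follow from the general machinery developed earlier in the paper.
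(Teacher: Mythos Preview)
Your proposal is correct and follows essentially the same route as the paper: the paper packages your steps of passing to the Borel construction, decomposing the chain complex over $\Gamma_{\mathcal{F}_n}$, and applying Shapiro's lemma into a general result (Theorem~\ref{thm_homotopyquotient}) about homotopy quotients of diagonal complex products, and then invokes it together with Lemma~\ref{lemma_symorbits} for the orbit and stabiliser identification. One small point of precision: the decomposition of $C_\ast(\MY,R)$ you write is a quasi-isomorphism of $H$-chain complexes rather than an isomorphism, but this is exactly what is needed for the hyperhomology computation.
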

\noindent The definition of a $Z$-coloured planted forest and of the complexes above are given in Section~\ref{section_fullautos}.
The theorem also has an analogue for the integral cohomology.

The paper is ordered as follows.  The first section covers some preliminary material; it describes various subgroups of $\Aut(G)$, in particular giving a presentation of $\FR(G)$, and then discusses some topological considerations.

In the second section the moduli space of cactus products is introduced, we describe how its fundamental group acts intuitively on the free product and then discuss the connection with the Outer space construction when $G\cong F_n$, the free group on $n$ letters.

The third section describes the general theory of diagonal complexes, gives some simple examples and introduces the diagonal complex product.  Presentations for the diagonal complex product of groups are given and the homology of the diagonal complex product of pointed spaces is calculated.

For the fourth section we recap material from~\cite{Abels1993} whilst applying it to particular subgroups of diagonal complex products of groups.
This is the tool we need to calculate the homotopy type of the diagonal complexes which follow.

In the fifth section we study the diagonal complex of forest posets $\Gf$ and prove that the diagonal complex products associated to it give classifying spaces for the groups $\FR(G)$.

Finally in Section~\ref{section_HAutFP} the homology groups of $\FR(G)$, $\PAut(G)$ and $\SAut(G)$ are calculated.

\section*{Acknowledgements}
This work was carried out while the author was funded by the Richard Metheringham Scholarship, awarded by the Worshipful Company of Cutlers.
I am grateful to Vladimir Dotsenko for introducing me to this family of groups, to Aur\'{e}lien Djament and Ga\"{e}l Collinet for feedback on an earlier version of this paper and to Christopher Brookes for his recommendations and guidance.

%%%%%%%%%%%%%%%%%%%%%%%%%%%%%%%%%%%%%%%%%%%%%%%%%%%
\section{Preliminaries}
The groups to be studied in this paper are introduced, then we address some topological considerations.
\subsection{A diagram of subgroups}
Let $\bdG=\brac{G_1\clc G_n}$ be an $n$-tuple of groups and $G$ their free product.  We refer to the $G_i$ as \emph{the factors of $G$}.
To begin we describe a diagram of subgroups of $\SAut(G)$, this is recalling material from~\cite{McCullough1996}.

A number of groups were introduced in the introduction and they are generated by various combinations of the automorphisms:
\begin{itemize}
\item[(a)] permutations of isomorphic factors, 
\item[(b)] inner automorphisms of an individual factor,
\item[(c)] more generally, automorphisms of an individual factor and 
\item[(d)] partial conjugations
\begin{equation}\alphaij(g) = \begin{cases}
g & \text{ if $g\in G_k$ and $k\neq i$}\\
g^{g_j} & \text{ if $g\in G_i$.}
\end{cases}\end{equation}
for distinct $i,j$ and some $g_j\in G_j$.
\end{itemize}
It will be convenient to identify isomorphic factors $G_i\cong G_j$, in particular this gives a canonical choice of subgroup of factor permutations which we denote $\symG$.
The subgroups generated by inner automorphisms and by automorphisms of the individual factors are denoted $\bInn(\bdG)\cong\prod_i\Inn(G_i)$ and $\bAut(\bdG)\cong\prod_i\Aut(G_i)$ respectively.

Recall that $\FR(G)$ is the group generated by partial conjugations and that $\Wh(G)$ is giving by adding in the inner factor automorphisms.
By adding on the symmetric group generators $\symG$ we get symmetric versions $\SFR(G)$ and $\SWh(G)$.
These groups fit into the diagram
\begin{equation}\xymatrix{
\SFR(G)\ar[r] &\ar[r] \SWh(G) & \SAut(G)\\
\FR(G)\ar[r]\ar[u] & \Wh(G)\ar[r]\ar[u] & \PAut(G).\ar[u]
}\end{equation}
Every arrow is a normal embedding as are their composites.  
Factoring out by the normal subgroup $\FR(G)$ gives the corresponding diagram.
\begin{equation}\xymatrix{
\symG\ar[r] &\ar[r] \bInn(\bdG)\rtimes\symG & \bAut(\bdG)\rtimes\symG\\
I\ar[r]\ar[u] & \bInn(\bdG)\ar[r]\ar[u] & \bAut(\bdG).\ar[u]
}\end{equation}
Most importantly we have that 
\begin{equation}\label{eq_semidirect}
\SAut(G) \cong \FR(G) \rtimes \bAut(\bdG) \rtimes \symG.
\end{equation}
We have omitted the bracketing deliberately, either semi-direct product, $\rtimes$ may be evaluated first.

The group $\FR(G)$ is presented as follows.
\begin{proposition}\label{prop_FRpres}
Let $\bdG=(G_1\clc G_n)$ and let $G=G_1\ala G_n$.  Then $\FR(G)$ is generated by partial conjugations
\begin{equation}\alpha_i^{g_j}\end{equation}
for $i=1\clc n$, with $j\neq i$ and $g_j\in G_j$.  These are subject to defining relations
\begin{align}
\alpha^{h_j}_i\alphaij &= \alpha_i^{g_jh_j}, && \label{eq_FRrel1}\\ 
\alpha^{e}_i &= e, &&\label{eq_FRrel2}\\
\sbrac{\alphaij, \alpha_k^{h_l}}&=e &&\text{ for $i\neq k$ and}\label{eq_FRrel3}\\
\sbrac{\alphaij\alpha_k^{g_j}, \alpha_i^{g_k}} &= e && \text{ for distinct $i,j$ and $k$.}\label{eq_FRrel4}
\end{align}
\end{proposition}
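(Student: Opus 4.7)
The plan is to establish an isomorphism between the group $\tilde{F}$ presented by the generators $\alphaij$ and the four relation families \eqref{eq_FRrel1}--\eqref{eq_FRrel4}, and the subgroup $\FR(G)\le\Aut(G)$.

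First I would verify that each listed relation holds in $\Aut(G)$, by evaluating both sides on an arbitrary element $g\in G_m$ and comparing. Relations \eqref{eq_FRrel1} and \eqref{eq_FRrel2} together express that for fixed $i\neq j$ the partial conjugations $\alphaij$, $g_j\in G_j$, assemble into a group homomorphism $G_j\to\Aut(G)$ (up to the ordering convention for composition), which is immediate from the defining formula of a partial conjugation. Relation \eqref{eq_FRrel3} reduces to a short case analysis on $m$: when $i\neq k$, the automorphisms $\alphaij$ and $\alpha_k^{h_l}$ act nontrivially only on the disjoint factors $G_i$ and $G_k$, and the conjugating elements are drawn from factors that the other automorphism fixes pointwise. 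Relation \eqref{eq_FRrel4} is the subtle one; its content is that $\alpha_k^{g_j}$ sends $g_k\in G_k$ to the conjugate $g_j^{-1}g_kg_j$, so that the identity records precisely the compensation needed to push $\alpha_i^{g_k}$ past the pair $\alphaij\alpha_k^{g_j}$ in a consistent manner. These verifications together yield a surjection $\phi:\tilde{F}\twoheadrightarrow\FR(G)$.

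For injectivity I would appeal to the presentation of $\SAut(G)$ due to Fouxe-Rabinovitch~\cite{Fouxe1940}, as refined by Gilbert~\cite{Gilbert1987}. Their presentation uses generators consisting of the partial conjugations $\alphaij$ together with generators for the complement $\bAut(\bdG)\rtimes\symG$, and its defining relations split into three classes: relations internal to the complement, relations internal to the partial conjugations (which will match our four families), and \emph{action} relations encoding how the complement conjugates the partial conjugations. In view of the semidirect decomposition \eqref{eq_semidirect}, a standard Reidemeister--Schreier (or Tietze) argument identifies $\FR(G)$ with the group presented by the second class of generators using only the second class of relations, which is exactly $\tilde{F}$.

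The main obstacle is verifying that no hidden relations among the $\alphaij$ are concealed inside the action relations: on specialising every complement generator to the identity, each action relation must collapse to a consequence of the four listed families rather than impose a new identity. This is a direct check, using that conjugation by a complement generator carries each $\alphaij$ to another partial conjugation of the same form (with indices permuted by $\symG$ and conjugating element transported by $\bAut(\bdG)$). Once this check is complete, $\phi$ is injective and the presentation follows.
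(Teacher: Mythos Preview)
Your proposal is sound, and in spirit it is the same route the paper takes: both rely on Gilbert's presentation of the full automorphism group and identify the relations \eqref{eq_FRrel1}--\eqref{eq_FRrel4} with specific relations appearing there. The paper, however, is far more terse than your write-up: it does not verify the relations in $\Aut(G)$ or set up a Reidemeister--Schreier argument, but simply cites~\cite{Gilbert1987} (whose proof proceeds by peak reduction) and records the dictionary --- \eqref{eq_FRrel1} and \eqref{eq_FRrel2} correspond to Gilbert's relations (1J) and (5)(iii), \eqref{eq_FRrel3} to (5)(i), (5)(ii) and (2J), and \eqref{eq_FRrel4} to (8J).

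One minor comment: your extraction of $\FR(G)$ from the presentation of $\SAut(G)$ via Reidemeister--Schreier is heavier machinery than needed. Because $\FR(G)$ is a \emph{retract} of $\SAut(G)$ by~\eqref{eq_semidirect}, you can simply kill the complement generators in Gilbert's presentation; the retraction guarantees automatically that no extra relations among the $\alphaij$ are hidden in the action relations, so the ``main obstacle'' you flag is in fact vacuous once the semidirect product structure is invoked.
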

\noindent This is proved in~\cite{Gilbert1987} using peak reduction.
Our relations~\eqref{eq_FRrel1} and~\eqref{eq_FRrel2} are equivalent to the relations (1J) and (5)(iii).
Relation~\eqref{eq_FRrel3} is equivalent to the relations (5)(i), (5)(ii) and (2J).
Relation~\eqref{eq_FRrel4} is equivalent to (8J). 

In~\eqref{eq_semidirect} the symmetric automorphism group was given as a semi-direct product of $\FR(G)$ and $\bAut(\bdG)\rtimes\symG$.
Let $\sigma\in\symG$, then in $\SAut(G)$ we have
\begin{equation}
\sigma\alpha_i^{g_j}\sigma^{-1} = \alpha_{\sigma(i)}^{g_{\sigma(j)}}.
\end{equation}
Let $\tau\in\Aut(G_k)$ then
\begin{equation}
\tau\alpha_i^{g_j}\tau^{-1} = \begin{cases} \alpha_i^{\tau(g_j)} &\text{ if $j=k$},\\
\alpha_i^{g_j} & \text{ otherwise.} \end{cases}
\end{equation}
These suffice to give the action of $\bAut(\bdG)\rtimes\symG$ on $\FR(G)$.

\subsection{Some topological considerations}\label{section_topologicalconsiderations}
When we refer to a space $Y$, we will take this to mean that $Y$ is a CW~complex.
Any topological space is weakly homotopy equivalent to a CW~complex so for our purposes this is no restriction.
The direct product $Y_1\times Y_2$ of spaces $Y_1$ and $Y_2$ is taken to have the CW~product topology.  This differs only slightly from the usual product topology; both products have the same weak homotopy type.
For a discussion of these matters and further references see the Appendix of~\cite{Hatcher2002}.

The reason for restricting to CW~complexes is that a CW~complex may be reconstructed from a covering of CW~subcomplexes in the following sense.  
Let $\mathcal{U}=\cbrac{X_i}_{i\in I}$ be a cover of a CW~complex $X$ by CW~subcomplexes $X_i\hookrightarrow X$.
We may form a diagram $X_{\mathcal{U}}$ consisting of 
\begin{itemize}
\item the intersections $\bigcap_{i\in A} X_i$ for non-empty finite subsets $A\subseteq I$, and
\item the inclusions $\bigcap_{i\in A}X_i\hookrightarrow \bigcap_{j\in B} X_j$ for $B\subseteq A$.
\end{itemize}
The amalgamation $\text{am}(X_{\mathcal{U}})$ is the colimit of this diagram and has a natural map to $X$.
\begin{lemma}\label{lemma_amalgamations}
The natural map $\text{am}(X_{\mathcal{U}})\rightarrow X$ is an isomorphism of CW~complexes.
\end{lemma}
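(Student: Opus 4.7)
The plan is to establish that the natural map from $\text{am}(X_{\mathcal{U}})$ to $X$ is both a bijection on points and a bijection on cells, with compatible topologies, so that it is forced to be an isomorphism of CW~complexes.

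The first step is the crucial cellular observation: because each $X_i$ is a CW~\emph{sub}complex, its intersection with any open cell $e^\circ$ of $X$ is either empty or all of $e^\circ$. Combined with the fact that $\mathcal{U}$ is a cover, this means that for every cell $e$ of $X$ the set $A(e)=\{i\in I\mid e\subseteq X_i\}$ is non-empty. Hence every open cell of $X$ is contained in $\bigcap_{i\in A}X_i$ for any non-empty finite $A\subseteq A(e)$, and so appears as an open cell of some term in the diagram $X_\mathcal{U}$.

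Next I would verify bijectivity at the level of points. Surjectivity is immediate: any $x\in X$ lies in some $X_i$, so it has a preimage in the term $X_i$ of the diagram. For injectivity, suppose $x\in\bigcap_{i\in A}X_i$ and $y\in\bigcap_{j\in B}X_j$ are identified in $X$. Since $x=y$ as points of $X$, they both lie in $\bigcap_{k\in A\cup B}X_k$, and the two canonical maps from $\bigcap_{i\in A}X_i$ and $\bigcap_{j\in B}X_j$ into this common term identify them; hence they agree in the colimit. The same argument, combined with the cellular observation, shows that the cells of $\text{am}(X_\mathcal{U})$ are in bijection with those of $X$, with attaching maps matching.

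Finally I would check that the topologies agree. The amalgamation carries the colimit (weak) topology with respect to the inclusions $\bigcap_{i\in A}X_i\hookrightarrow\text{am}(X_\mathcal{U})$, while $X$ carries the weak topology with respect to its cells. Using the cellular observation above, every cell of $X$ factors through some finite intersection, so any subset of $X$ whose preimage in each finite intersection is closed has closed preimage in each cell of $X$, and is therefore closed in $X$. The reverse implication is automatic from continuity of each inclusion. The main obstacle I anticipate is this last topological comparison, since the index set $I$ and the poset of finite intersections may be large; the point is that the argument reduces, via the subcomplex condition, to checking closedness on the (locally finite) cell structure of $X$, which is exactly the content of the first step.
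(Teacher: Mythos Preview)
Your argument is correct and is precisely the unpacking of what the paper states in one line (``This follows from the definition of a CW complex. The map is always an isomorphism of sets\ldots''); the paper gives no further details, so your approach is the same. One minor remark: the parenthetical ``(locally finite)'' in your final paragraph is neither needed nor assumed---CW complexes need not be locally finite---but the weak-topology argument you actually give does not use this, so the slip is harmless.
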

This follows from the definition of a CW~complex.
The map is always an isomorphism of sets but for topological spaces in general the topologies may differ, in fact even the weak homotopy types may be different.

A pointed space $(Y,\ast)$ consists of a space $Y$ with a chosen point $\ast\in Y$.  We will assume that each pointed space is path connected.

%%%%%%%%%%%%%%%%%%%%%%%%%%%%%%%%%%%%%%%%%%%%%%%%%%%
\section{The moduli space of cactus products}\label{section_moduli}
In the category of pointed spaces the analogue of the $n$-fold free product of groups is the $n$-fold wedge sum; this takes $n$ pointed spaces and identifies their basepoints.
\begin{equation*}
\begin{array}{l}\includegraphics[height=13mm]{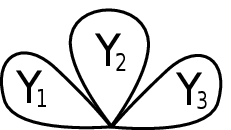}\end{array}
\end{equation*}
The fundamental group of a wedge sum is the free product of the fundamental groups of the summands.
The assumption that the spaces have the homotopy type of CW~complexes means that changing the basepoint does not alter the homotopy type, so forming the wedge sum with a different choice of basepoints gives the same homotopy type.
The moduli space of cactus products defined below describes a space of possible ways to `wedge' $n$ spaces together.

\subsection{Cactus products}
Let $t$ be a tree with vertex set $\sbrac{n}=\cbrac{1,\ldots,n}$.  A tree $t$ with a chosen vertex $i$ is called a \emph{rooted tree}.
The edges of a rooted tree may be oriented by pointing them towards the root, for an edge $e$ the source vertex is denoted $s(e)$ and the target vertex $t(e)$. An example of a rooted tree with root $4$:
\begin{equation}\label{eq_tree}
\xytree{ & 2 \ar[dr] && 1\ar[dl] \\
5\ar[dr]&&3\ar[dl]&\\
&4&.&}
\end{equation}
Now let $(Y_i,\ast_i)$ be pointed spaces for $i=1\clc n$.
A \emph{cactus diagram} $T$ over rooted tree $t$ consists of an edge labelling of $t$: to each edge $e$ there is a label $y_e\in Y_{t(e)}$.
A cactus diagram $T$ gives a \emph{cactus product space} $Y_T$:
\begin{equation}
\frac{Y_1\amalg\ldots\amalg Y_n}{\ast_{s(e)}\sim y_e \mid e\in t}.
\end{equation}
An example corresponding to the rooted tree~\eqref{eq_tree} above:

\begin{equation}
\begin{array}{l}\includegraphics[height=25mm]{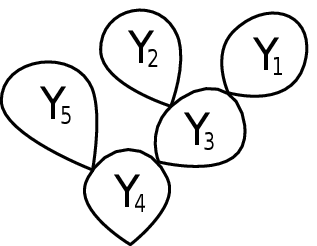}\end{array}
\end{equation}

\noindent If each pointed space is homotopy equivalent to a CW~complex then the cactus product space is homotopic to the wedge product $Y_1\vee\ldots\vee Y_n$.
A cactus product space has a canonical choice of basepoint given by the basepoint of the space corresponding to the root of the tree.
For each $i$ there is a natural inclusion $Y_i\hookrightarrow Y_T$, but note that this inclusion does not respect the basepoint.  The \emph{cactus product} consists of the cactus product space in the context of the diagram
\begin{equation}\xytree{ & Y_T &\\
Y_1\ar[ur] &\cdots\cdots& Y_n.\ar[ul]}\end{equation}

\subsection{The moduli space of cactus products}
A \emph{congruence} of cactus products is a map $Y_T\rightarrow Y_{T'}$ making the diagram
\begin{equation}\xytree{Y_T \ar[rr] & & Y_{T'} \\
 & Y_i \ar[ur]\ar[ul] &} \end{equation}
commute for each $i$.
For example the two decorated trees
\begin{equation}\xytree{ 2\ar[dr]_y & & 3\ar[dl]^y\\ & 1 & } \quad\text{ and }\quad\xytree{3\ar[d]^\ast\\ 2\ar[d]^y\\ 1}\end{equation}
give the same space:

\begin{equation}
\begin{array}{l}\includegraphics[height=22mm]{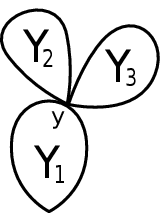}\end{array}
\end{equation}

\begin{lemma}
For a fixed tree $t$, a cactus product $Y_T$ determines the labelling $T$.
And so the set of cactus products over $t$ is naturally identified with 
\begin{equation}\prod_{e} Y_{t(e)}. \label{eq_prodtop}\end{equation}
\end{lemma}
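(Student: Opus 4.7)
The strategy is to recover the labels directly from the cactus product and thereby invert the assignment $T \mapsto Y_T$. Fix an edge $e$ of $t$; the defining relation of $Y_T$ gives
\[ \iota_{s(e)}(\ast_{s(e)}) = \iota_{t(e)}(y_e), \]
where $\iota_i : Y_i \to Y_T$ denotes the natural map. If each $\iota_i$ is injective, then $y_e$ is the unique preimage of $\iota_{s(e)}(\ast_{s(e)})$ under $\iota_{t(e)}$, so the labelling is determined by the cactus product.

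The key step is therefore the injectivity of each $\iota_i$. My plan is to describe $Y_T$ as an iterated one-point pushout along $t$: order the non-root vertices $i_1,\ldots,i_{n-1}$ so that the target (parent) of the unique outgoing edge $e_{i_k}$ of $i_k$ lies in $\{r,i_1,\ldots,i_{k-1}\}$, start with $X_0 = Y_r$ for the root $r$, and form $X_k$ from $X_{k-1}$ by adjoining $Y_{i_k}$ along the one-point map $\{\ast_{i_k}\} \to X_{k-1}$ sending $\ast_{i_k}$ to the image of $y_{e_{i_k}}$ inside $X_{k-1}$. A pushout identifying a single point on each side embeds both summands, so by induction every factor $Y_{i_j}$ with $j\le k$ sits injectively in $X_k$. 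The equivalence relation assembled from the successive one-step identifications coincides with the single relation defining $Y_T$, and Lemma~\ref{lemma_amalgamations} identifies the colimit of this gluing diagram with $Y_T$ as a CW~complex; hence each $\iota_i$ is an injection.

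With injectivity in hand the identification with $\prod_e Y_{t(e)}$ is immediate: the map from labellings to cactus products is injective by the recovery formula and surjective by construction, and the same formula shows that any congruence $f : Y_T \to Y_{T'}$ forces the chain of equalities $\iota_{t(e)}^{T'}(y_e) = f(\iota_{t(e)}^T(y_e)) = f(\iota_{s(e)}^T(\ast_{s(e)})) = \iota_{s(e)}^{T'}(\ast_{s(e)}) = \iota_{t(e)}^{T'}(y_e')$, so $y_e = y_e'$ edge by edge. I anticipate that the main obstacle is the verification that the iterated one-point pushout agrees with $Y_T$ topologically and not merely set-theoretically; this is where the acyclicity of $t$ is essential, since it is exactly what permits the ordered construction in which every attaching point already lies within an embedded copy of its parent factor.
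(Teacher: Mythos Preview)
Your argument is correct and amounts to the same observation as the paper's one-line proof, which simply reads off $y_e$ as the single point of $Y_{s(e)}\cap Y_{t(e)}$ inside $Y_{t(e)}$; you have supplied the justification (injectivity of each $\iota_i$ via an iterated one-point pushout along the tree) that the paper leaves implicit. One minor quibble: Lemma~\ref{lemma_amalgamations} concerns reconstruction from a cover rather than agreement of an iterated quotient with a one-step quotient, so it is not quite the right citation---but the fact you actually need, that both constructions impose the same equivalence relation on $\coprod_i Y_i$ and hence yield the same CW complex, is elementary and does not depend on that lemma.
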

\begin{proof}
Given an edge $e$ the intersection
\begin{equation} Y_{s(e)}\cap Y_{t(e)} \hookrightarrow Y_{t(e)}. \end{equation}
consists of a single point, which is the label $y_e\in Y_{t(e)}$.
\end{proof}
The \emph{moduli space of cactus products} $\MY$ is defined to be the set of cactus products modulo congruence, with the topology inherited from the CW product topology of~\eqref{eq_prodtop}.
It has a canonical basepoint given by the wedge product, which is realised by any tree with every label a basepoint.
\begin{proposition}\label{prop_moduliembedding}
The moduli space of cactus graphs $\MY$ may be embedded into 
\begin{equation} Y_1^{n-1}\tlt Y_n^{n-1}.\end{equation}
\end{proposition}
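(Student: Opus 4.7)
The plan is to exhibit an explicit continuous injection $\Phi\colon\MY\to Y_1^{n-1}\times\cdots\times Y_n^{n-1}$ whose $j$-th factor $Y_j^{n-1}$ has its $n-1$ coordinates indexed by the vertices $i\neq j$. For the $(i,j)$-coordinate I would use the natural retraction $r_j\colon Y_T\to Y_j$ given by fixing $Y_j$ and collapsing each other $Y_k$ ($k\neq j$) to the single point of $Y_j$ at which the sub-cactus containing $Y_k$ attaches; the existence and uniqueness of this attachment follow from the tree shape of $t$ combined with the intersection lemma above. Setting $\phi_{ij}(T):=r_j\bigl(\iota_i(\ast_i)\bigr)$ and $\Phi(T):=(\phi_{ij}(T))_{i\neq j}$ then defines the map, and since $r_j$ depends only on the intrinsic pair $(Y_T,\iota_j\colon Y_j\hookrightarrow Y_T)$ the assignment is manifestly congruence-invariant. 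Continuity then follows by a stratum-wise check: over a fixed rooted tree $t$ a direct case analysis identifies $\phi_{ij}$ either with the label on the edge $v\to j$ where $v$ is the ancestor of $i$ adjacent to $j$ (when $j$ is a proper ancestor of $i$) or with $\ast_j$ otherwise, both of which are continuous in the labels $(y_e)_{e\in t}$.

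The main work lies in proving injectivity. Suppose $\Phi(T)=\Phi(T')$; my aim is to show that the equivalence relations $\sim_T$ and $\sim_{T'}$ on $\coprod_i Y_i$ generated by the respective tree gluings coincide, whence the identity on $\coprod_i Y_i$ descends to a congruence $Y_T\to Y_{T'}$ that fixes each $Y_i$. The key structural observation, which arises from the tree shape of $t$ together with the intersection lemma, is that every equivalence class of $\sim_T$ contains at most one non-basepoint representative, the remaining members being basepoints joined by a chain of basepoint-labelled edges. Granted this, for each $j$ the partition of $[n]\setminus\{j\}$ into fibres of $i\mapsto\phi_{ij}(T)$, together with the attachment points themselves, determines which basepoints are glued and to which (if any) non-basepoint representative; carrying this analysis out for every $j$ reconstructs $\sim_T$ from $\Phi(T)$ alone.

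Finally, upgrading the continuous injection to a topological embedding reduces to checking that the inverse on $\operatorname{Im}\Phi$ is continuous, which I would verify stratum by stratum using the colimit presentation of $\MY$. The anticipated obstacle is the injectivity step itself: the subtlety is that a basepoint $\ast_j$ may appear as $\phi_{ij}$ either by default (when $i$'s subtree lies on the root side of $j$) or genuinely (when the relevant edge label happens to coincide with $\ast_j$), so disentangling these two situations will require a careful cross-comparison of $\phi_{ij}$ with $\phi_{ji}$ and an analysis of how the attachment partitions are mutually constrained as $j$ ranges over $[n]$.
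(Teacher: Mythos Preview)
Your map coincides with the paper's up to swapping the roles of the indices: the paper sets $y_{ij}\in Y_i$ to be the label on the last edge of the directed path from $j$ to $i$ (and $\ast_i$ if that path is not directed), which is exactly your $\phi_{ji}$. So the underlying strategy is the same.

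Where the approaches differ is in packaging and in the level of detail. The paper simply writes down the combinatorial formula and asserts in one line that it is well-defined and injective. Your definition via the retractions $r_j\colon Y_T\to Y_j$ is a genuine improvement on this: congruence-invariance becomes automatic because $r_j$ depends only on the pair $(Y_T,\iota_j)$, whereas in the paper's formulation one must check by hand that two labelled trees related by a congruence yield the same coordinates. Your stratum-wise description then recovers the paper's formula. For injectivity, the paper offers nothing beyond the assertion, while you sketch a reconstruction of $\sim_T$ from $\Phi(T)$. The subtlety you flag is real, but your retraction viewpoint resolves it cleanly: the family $\{r_m\}_{m\in[n]}$ jointly separates points of $Y_T$ (any point lies in some $Y_m$ and is fixed by $r_m$), so $\ast_i\sim_T\ast_k$ if and only if $\phi_{im}=\phi_{km}$ for every $m\neq i,k$ together with $\phi_{ik}=\ast_k$ and $\phi_{ki}=\ast_i$. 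This criterion disposes of the basepoint ambiguity you were worried about without needing to compare $\phi_{ij}$ with $\phi_{ji}$ pairwise.
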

\begin{proof}
A coordinate in the product space is denoted
\begin{equation} (y_{ij})_{i\neq j}, \end{equation}
where $i,j=1\clc n$ and $y_{ij}$ is an element of $Y_i$. 
Let $T$ be a cactus diagram over a tree $t$.
For each pair $(i,j)$ there is a unique undirected path in $t$ from $j$ to $i$.
If this path is directed, that is if each edge points towards the root, then define $y_{ij}$ to be $y_e$ where $e$ is the last edge in the path (and so has end vertex $i$).
If the path is not directed then let $y_{ij}$ be the basepoint $\ast\in Y_i$.

This map is both well-defined and injective.
\end{proof}

\subsection{Action on $G_1\ala G_n$}
A path in the moduli space of cactus products of pointed spaces $Y_1\clc Y_n$ can be seen to act on the free product of groups
\begin{equation}\label{eq_freefundgps}\pi_1Y_1\ala\pi_1Y_n.\end{equation}
This is illustrated by the following diagram which shows a path representing the automorphism $\alpha_2^{g}$.
\begin{equation}
\begin{array}{l}\includegraphics[height=22mm]{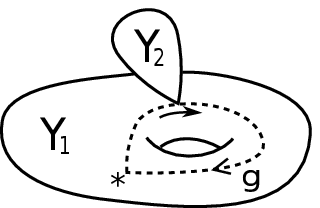}\end{array}
\end{equation}
Any loop $\gamma$ contained in $Y_2$ is taken to a loop which first follows $g$ in $Y_1$ and then follows $\gamma$ before returning around $g$ in the opposite direction.
We will not make this action precise now, instead we compute the fundamental group of $\MY$ and show that it is isomorphic to $\FR(G)$, where $G$ is the free product of the fundamental groups of the pointed spaces~\eqref{eq_freefundgps}.
This computation is a result of Theorems~\ref{thm_YGFFR}~and~\ref{thm_MYYGFiso}.

\subsection{Relationship with Outer space}
Recall that Outer space is the moduli space of marked metric graphs.
It was first defined in~\cite{Culler1986}, see~\cite{Vogtmann2002} for a more recent introduction.
Autre space is similarly defined as the moduli space of marked metric graphs with a chosen basepoint.

Let $Y_i = S^1:=[0,1]/(0\sim 1)$, for $i=1\clc n$.  Then a cactus product is a based graph of rank $n$, which we call a \emph{cactus graph}.  
It is naturally a metric graph with the metric inherited from the copies of $S^1$.
By attaching a marking\footnote{a marking is a homotopy class of based maps from the wedge of $n$ circles, $(R_n,\ast)$} to a cactus graph we obtain a marked metric graph with a basepoint, that is, a point of Autre space.
If we restrict ourselves to a set of markings corresponding to a coset of the pure symmetric automorphisms then this defines an embedding of the universal cover of $\MY$ into Autre space.

A space of marked unpointed cactus graphs was defined in~\cite{Collins1989}, the contractability of the space was shown, along with the result that the outer symmetric automorphism group $O\Sigma(F_n)=\SAut(F_n)/\Inn(F_n)$ acts with finite stabilisers.
However the fact that the outer pure symmetric automorphism group $\FR(F_n)/\Inn(F_n)$ acts properly to give a classifying space was not used.

\subsubsection{Relationship with the Outer space of a free product}
Guirardel and Levitt~\cite{Guirardel2007} defined an `outer space' for any free product of groups $G=G_1\ala G_p\ast F_r$, where each $G_i$ is freely indecomposable and not infinite cyclic.
When $r=0$ their Outer space becomes the space of McCullough and Miller~\cite{McCullough1996}.
In the $r=0$ case the action of $\FR(G)$ on the universal cover of $\MY$ may be extended to an action of the whole automorphism group $\Aut(G)$.  
Since $\MY$ is a classifying space for $\FR(G)$ there is an $\FR(G)$-equivariant map from McCullough-Miller space to the universal cover of $\MY$, it seems likely that this can be extended to an $\Aut(G)$-equivariant map but we do not address that here.
In future work we intend to extend the definition of $\MY$ to allow for any free product of groups possibly including factors isomorphic to $\ZZ$.
Such a space would have small stabilisers in the spirit of the Outer space of a free group.

%%%%%%%%%%%%%%%%%%%%%%%%%%%%%%%%%%%%%%%%%%%%%%%%%%%
\section{Diagonal complexes}\label{section_dcs}
Given a graph $\Gamma=(V,E)$ with a group $G_v$ attached to each vertex, the graph product is defined to be the free product of the vertex groups with additional relations for each $vw\in E$ which assert that the elements of $G_v$ commute with those of $G_w$.

In this section we construct a theory which allows for a greater range of commutation relations allowing not only for commutator brackets between the generating elements, but also between certain products of generators.
The graph is replaced by a `diagonal complex' and the graph product by a `diagonal complex product'.

\subsection{Diagonal complexes}
\subsubsection{Partitions}\label{section_partitions}
Let $X$ be a finite non-empty set.  Then a \emph{partition of $X$} is a set of subsets $\cbrac{U_1\clc U_k}$ which are non-empty, pairwise disjoint and whose union is $X$.
We say that a partition is \emph{proper} if $k$ is larger than one.
A \emph{partial partition of $X$} is a set of subsets $\cbrac{U_1\clc U_k}$ which are non-empty and pairwise disjoint, but without the condition that their union is $X$.
We will refer to a subset of a partial partition of $\cUi$ as a \emph{subpartition of $\cUi$}.
Suppose that $\cbrac{U_1\clc U_k}$ and $\cbrac{V_1\clc V_l}$ are two partial partitions of $X$.
Then we say that $\cUi$ is a \emph{coarsening} of $\cVj$ (or that $\cVj$ is a refinement of $\cUi$) if the unions are equal and if for each $V_j$ there is a $U_i$ containing $V_j$.
If $\cUi$ is a coarsening of a subpartition of $\cVj$ then we say that $\cUi$ is a \emph{partial coarsening} of $\cVj$.
The relation of being a partial coarsening is reflexive, antisymmetric and transitive, so defines a poset $(\mathcal{X},\leq_{pc})$ on the set of partial partitions of $X$. 

\subsubsection{Diagonals of a product}\label{section_productdiagonals}
Let $(Y,\ast)$ be a pointed topological space.  
The diagonal map $D_Y:Y\rightarrow Y\times Y$ sends $y$ to $(y,y)$.
If $U\subseteq \sbrac{n}=\cbrac{1,\ldots,n}$ then there is a diagonal map $D^U:Y\rightarrow Y^{\times n}$ given by
\begin{equation}D^U(y) = (x_1,\ldots x_n),\end{equation}
where
\begin{equation}x_i = \begin{cases} y & \text{ if }i\in U, \\
\ast & \text{ if }i\notin U. \end{cases}\end{equation}
Similarly, if $\cbrac{U_1,\ldots,U_k} \subseteq \sbrac{n}$ is a partial partition then we define the map $D^{\cUi}:Y^{\times k}\rightarrow Y^{\times n}$ by
\begin{equation}\label{eq_diagonalmap}
D^{\cUi}(y_1,\ldots,y_k) = (x_1,\ldots x_n),\end{equation}
where
\begin{equation}x_i = \begin{cases} y_j & \text{ if }i\in U_j, \\
\ast & \text{ if }i\notin U_j \text{ for each }j=1,\ldots,k. \end{cases}\end{equation}
This has image
\begin{equation}\bigr\{(y_i)_{i\in \sbrac{n}}\mid y_i=y_j \text{ for $i,j\in U_i$ and } y_l=\ast \text{ for }l\notin \bigcup_i U_i\bigr\}.\end{equation}
From this we may see that the image of $D^{\cUi}$ is contained in the image of $D^{\cVj}$ if and only if $\cUi$ is a partial coarsening of $\cVj$.

We wish to calculate the intersection of the images of $D^\cUi$ and $D^\cVj$ for any two partial partitions.
To do so we define an equivalence relation on $X\cup\cbrac{0}$.  If both $k,l\in U_i$ or $k,l\in V_j$ then we let $k\sim l$.  
If $k\notin \bigcup_i U_i \cap \bigcup_j V_j$ then let $k\sim 0$.  We then extend $\sim$ transitively.
Now let $\cbrac{W_k}$ be the equivalence classes of $\sim$ which do not include $0$.
If this isn't empty then we have 
\begin{equation}\text{Im}D^{\cUi}\cap\text{Im}D^{\cVj} = \text{Im}D^{\cbrac{W_k}}. \end{equation}
If $\cbrac{W_k}$ is empty, that is every $i\in X$ is equivalent to $0$, then the intersection is just $\cbrac{\ast}$.
Recall from Section~\ref{section_partitions} that $\mathcal{X}$ is the poset consisting of partial partitions $\cUi$ and ordered by partial coarsening.
For $\cUi$ and $\cVj$ in $\mathcal{X}$, the partial partition $\cbrac{W_k}$ defined above (if it exists) is the greatest lower bound, or meet.

\subsubsection{Motivation}
Suppose that $(Y,\ast)$ is a pointed space.
In $Y\times Y$ there are three canonical embeddings of $Y$
\begin{equation} y\mapsto (\ast,y),\quad y\mapsto (y,\ast)\quad\text{ and }\quad y\mapsto (y,y),\end{equation}
the final map is referred to as the \emph{diagonal map of $Y$}.
We saw in Section~\ref{section_productdiagonals} that the set of canonical embeddings of $Y^k$ into $Y^n$ are given by the partial partitions $\cbrac{U_1\clc U_k}$ of $[n]=\cbrac{1\clc n}$.

The spaces that we are interested in are unions of these canonical subspaces.
We desire a combinatorial structure that will allow us to describe and manipulate these subspaces in an efficient and concise manner.
The obvious choice might be a subposet with meets of $(\mathcal{X},\leq_{pc})$, this would give us a notion of inclusion and intersection.
However these posets can be unwieldy to work with.
The choice of the structure of a diagonal complex does not allow us to define any subspace of our choosing, but the subspaces it does allow us to define have nice homotopical and group theoretic properties, as will be witnessed by Theorems~\ref{thm_Gdc} and \ref{thm_Hdc}.

\subsubsection{The definition}
Let $X$ be a set, write $X^+=\cbrac{\cbrac{x}\mid x\in X}$ for the set of singleton subsets of $X$.  
The set of finite, non-empty subsets of $X$ is denoted $\Pf X$.
\begin{definition}
A \emph{diagonal complex on $X$} consists of a pair $(\Gamma,\gamma)$, with $\Gamma \subseteq \Pf X$ and $\gamma:\Gamma\rightarrow \Pf \Gamma$ such that
\begin{enumerate}
\item $X^+ \subseteq \Gamma$,
\item for each $U \in \Gamma$ the set $\gamma(U)$ is a partition of $U$ and if $U$ is not a singleton then $\gamma(U)$ is a proper partition.  That is, if $U\in \Gamma - X^+$ then $\mbrac{\gamma(U)}> 1$,
\item (\emph{simplicial condition}) for $U\in \Gamma$ we write $\gamma(U) = \cbrac{U_1,\ldots,U_k}$.  For each $A\subseteq\sbrac{k}$ we require that $U_A\in\Gamma$, where
\begin{equation}U_A:=\bigcup_{i\in A} U_i.\end{equation}
We also require that $\gamma(U_A)$ is either $\cbrac{U_i\mid i\in A}$ or a refinement of $\cbrac{U_i\mid i\in A}$.
We call the $U_A$ the \emph{faces} of $U$.
\end{enumerate}
The \emph{dimension} of $U\in\Gamma$ is defined to be $\mbrac{\gamma(U)}$.
Note that the dimension of a face of $U$ may be greater than the dimension of $U$.
 
Consider the poset on $\Gamma$ transitively generated by $V\leq U$ if $V$ is a face of $U$.
In this poset if $V \leq U$ we say that $V$ is a \emph{descendant of $U$}.
For example a face of a face of $U$ is a descendant, but not necessarily a face of $U$.
If a diagonal complex $(\Gamma,\gamma)$ satisfies
\begin{itemize}
\item[(4)] the descendance order on $\Gamma$ agrees with the ordering by inclusion.
\end{itemize}
then we say that $(\Gamma,\gamma)$ is \emph{proper}.
\end{definition}
\begin{example}\label{example_simplicial}
Let $A_\dt$ be a simplicial complex with vertices $X$.  Let $\Gamma = A_\dt$ and define $\gamma(U) = U^+ = \bigl\{x^+=\cbrac{x}\mid x\in U\bigr\}$.
Then $(\Gamma, \gamma)$ is a proper diagonal complex.
\end{example}
\begin{example}\label{example_T}
Let $X=\sbrac{3}$, we define $\Gamma$ to be the set $\cbbrac{\cbrac{1,2,3}=X, \cbrac{1,2}} \cup X^+$ and $\gamma$ is defined by $\gamma(\cbrac{1,2,3}) = \cbbrac{\cbrac{1,2}, 3^+}$ and $\gamma(\cbrac{1,2}) = \cbrac{1^+,2^+}$.  We may represent $\Gam{}$ by the diagram
\begin{equation}\xy
(0,0)*+{3}="3";
(-5,-8)*+{1}="1";
(5,-8)*+{2.}="2";
(0,-8)*{}="p";
"3";"p"**\dir{-};
"1";"2"**\dir{-};
\endxy\end{equation}
In Section~\ref{section_dcrealisation} this picture will be realised as the geometric realisation of $(\Gamma,\gamma)$.
The descendance order agrees with the inclusion order and so $\Gamma$ is proper.
\end{example}
\begin{example}
An example of a diagonal complex which is not proper is given by adding the element $\cbrac{1,3}$ onto the previous example.
The picture we now get is
\begin{equation}\xy
(0,0)*+{3}="3";
(-5,-8)*+{1}="1";
(5,-8)*+{2.}="2";
(0,-8)*{}="p";
"3";"p"**\dir{-};
"1";"2"**\dir{-};
"3";"1"**\dir{-};
\endxy\end{equation}
Note that $\cbrac{1,3}$ is a subset of $\cbrac{1,2,3}$ but it is not a descendant and so this diagonal complex is not proper.
\end{example}
\noindent A morphism
\begin{equation}f:(\Gamma,\gamma)\rightarrow(\Gamma',\gamma')\end{equation}
from a diagonal complex $(\Gamma,\gamma)$ on $X$ to a diagonal complex $(\Gamma',\gamma')$ on $X'$ is given by an injective function
\begin{equation}f:X\rightarrow X'\end{equation}
satisfying the following conditions,
\begin{itemize}
\item for each $U \in\Gamma$, the image $f(U)$ is in $\Gamma'$.
\item the induced function $f:\Gamma\rightarrow\Gamma'$ makes the following diagram commute:
\begin{equation}\xymatrix{\Gamma\!\phantom{'}\ar[r]^f \ar[d]^{\gamma\phantom{'}} & \Gamma' \ar[d]^{\gamma'}\\
\Pf\Gamma\!\phantom{'}\ar[r]^{\Pf f} & \Pf \Gamma'.}\end{equation}
\end{itemize}
We will sometimes refer to $(\Gamma,\gamma)$ as a diagonal subcomplex of $(\Gamma',\gamma')$.
In particular if $Y\subseteq X$ and $\Gamma_Y := \Gamma\cap\Pf Y$ then $(\Gamma_Y, \gamma|_{\Gamma_Y})$ is referred to as \emph{the full diagonal subcomplex on $Y$}.

When a diagonal complex is proper it satisfies a concise combinatorial property.  
\begin{proposition}\label{prop_proper}
Let $\Gam{}$ be a diagonal complex on a set $X$.  Then $\Gam{}$ is proper if and only if for each $U\in \Gamma$
\begin{equation}\label{eq_maximals}\gamma(U) = \cbrac{U-M_1,\ldots,U-M_k},\end{equation}
where $M_1,\ldots,M_k$ are the maximal subsets of $U$ in $(\Gamma,\subseteq)$.
\end{proposition}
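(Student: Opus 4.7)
The plan is to establish the two directions of the biconditional separately, exploiting the fact that descendance always refines inclusion (faces are subsets, hence so are their descendants by transitivity). The substantive content of properness is therefore the converse: every $V\in\Gamma$ contained in $U$ must be reachable from $U$ by a chain of face inclusions.

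For the forward direction, assume $(\Gamma,\gamma)$ is proper and write $\gamma(U)=\cbrac{U_1\clc U_k}$. Each $U - U_i = U_{\sbrac{k}\setminus\cbrac{i}}$ is a face of $U$ by the simplicial condition, so it is a proper subset of $U$ lying in $\Gamma$. To show it is maximal among such, suppose some $V\in\Gamma$ satisfies $U - U_i \subsetneq V \subsetneq U$. Properness supplies a descendance chain $V=V_0 \leq V_1\leq\cdots\leq V_n=U$, whose final step exhibits $V_{n-1}$ as a proper face $U_A$ of $U$, with $V\subseteq U_A$ and $A\subsetneq\sbrac{k}$. The strict containment $V \supsetneq U-U_i$ supplies a point of $V\cap U_i$, forcing $i\in A$; meanwhile $V \supseteq U-U_i$ forces $U_j\subseteq U_A$ for every $j\neq i$, hence $j\in A$. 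Together these give $A=\sbrac{k}$, contradicting properness of $A$. Conversely, if $M$ is any maximal proper subset of $U$ in $\Gamma$, then $M$ is a descendant of $U$, so $M\subseteq U_A$ for some proper face $U_A$; picking any $j\in\sbrac{k}\setminus A$ gives $M\subseteq U_A\subseteq U - U_j$, and maximality forces $M = U - U_j$.

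For the reverse direction, assume \eqref{eq_maximals} holds and prove properness by induction on $|U|$. Given $V\subsetneq U$ with $V\in\Gamma$, the finiteness of $U\in\Pf X$ ensures the existence of a maximal proper $M\in\Gamma$ below $U$ containing $V$. By hypothesis $M = U - U_i$ for some $U_i\in\gamma(U)$, and the simplicial condition exhibits $M$ as a face of $U$. If $V=M$ we are done; otherwise $V\subsetneq M$ with $|M|<|U|$, and the inductive hypothesis applied to $M$ realises $V$ as a descendant of $M$, hence of $U$.

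The chief obstacle I expect is the maximality argument in the forward direction: one has to squeeze a proper face $U_A$ out of a general descendance chain and then leverage \emph{both} of the strict containments $U-U_i\subsetneq V\subsetneq U$ simultaneously to pin down $A$. Once this combinatorial matching is done, the bijection between $\gamma(U)$ and the set of maximal proper subsets of $U$ is automatic, since the blocks $U_i$ are distinct and non-empty.
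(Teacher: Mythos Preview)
Your proof is correct and follows essentially the same approach as the paper's: both hinge on the observation that the $U-U_i$ are always the maximal elements below $U$ in the descendance order, so properness amounts to these coinciding with the maximal proper subsets in $\Gamma$. The paper's converse direction is terser---it simply asserts that when two comparable partial orders on a finite poset share the same maximal-below sets at every element they must agree---whereas you unfold this into the explicit induction on $|U|$, which is exactly the proof of that poset fact.
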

\begin{proof}
Let $U\in \Gamma$, then $\gamma(U)=\cbrac{U_1,\ldots,U_k}$ is a partition of $U$.  
Recall that the unions of the $U_i$ are called the faces and so the maximal faces are the unions
\begin{equation} \bigcup_{i\neq j}U_i \end{equation}
for each $j$.  But this is just $U-U_j$ and by the definition of the descendance order these are maximal under $U$ in the descendance order.

Now suppose that $\Gam{}$ is proper, then the inclusion order agreeings with the descendance order and the sets $U-U_j$ are maximal subsets of $U$.

Conversely, if~\eqref{eq_maximals} holds then $U-U_j$ are both maximal in the descendents order and as subsets of $U$ for every $U\in\Gamma$.
Such a property of finite posets implies that the two orderings are equal.
\end{proof}
In particular this means that a proper diagonal complex $(\Gamma,\gamma)$ is unique with respect to $\Gamma$.
With this in mind we will sometimes just write $\Gamma$ for $(\Gamma,\gamma)$ when it is proper.

\newcommand{\flev}{\text{lev}_\text{f}}
\newcommand{\clev}{\text{lev}_\text{c}}
\subsubsection{Levels of a diagonal complex}
Let $(\Gamma,\gamma)$ be a diagonal complex on a set $X$.
We will define the \emph{level} $l:\Gamma\rightarrow\NN$ inductively as follows:
\begin{itemize}
\item the level is zero if $U\in X^+$, otherwise
\item the level is defined to be the maximum of the levels of the maximal subfaces of $U$, plus one:
\begin{equation}l(U) = \sup\cbrac{l(U-V)\mid V\in \gamma(U)} + 1. \end{equation}
\end{itemize}
The level is well-defined because for $U\in\Gamma-X^+$, the cardinalities of the elements of $\gamma(U)$ are strictly less than the cardinality of $U$.
The level will be used in inductive arguments.  

For $n\in\NN$ we define $(\Gamma_n,\gamma_n)$ to be the diagonal subcomplex given by elements of level $n$ or below.
This defines a filtration of $(\Gamma,\gamma)$:
\begin{equation} (X^+ = \Gamma_0,\gamma_0) \leq (\Gamma_1,\gamma_1) \leq (\Gamma_2,\gamma_2) \leq \ldots. \end{equation}
The union is the whole of $(\Gamma,\gamma)$.
\begin{remark}
There is also a \emph{coarse level} $l_c:\Gamma\rightarrow\NN$ which is similarly defined with $l_c(X^+)=\cbrac{0}$ and
\begin{equation}l_c(U) = \sup\cbrac{l_c(V)\mid V\in \gamma(U)} + 1. \end{equation}
This defines the \emph{coarse filtration} $(\Gamma^c_i,\gamma^c_i)$.
It is worth noting that the zeroth term of both filtrations consists of just the points of $X$, whilst in the coarse case the first term comes from a simplicial complex $\Gamma^c_1$ and in the regular case the first term $\Gamma_1$ is the $1$-skeleton of $\Gamma^c_1$.
The coarse level will not be used in the sequel although it would be sufficient in some arguments.
\end{remark}

\subsection{Diagonal complex products}
\subsubsection{The diagonal complex product}
The very reason for defining diagonal complexes is to study the diagonal complex products they index and which we define in this section.
In fact Theorem~\ref{thm_aspherical} is a result about the homotopy type of particular diagonal complex product of spaces and Theorem~\ref{thm_homology} is a calculation of the homology of a particular diagonal complex product of groups.

Let $(\cat{C},\otimes,k)$ be a strict symmetric monoidal category where the unit $k$ is an initial object.
We define a \emph{coalgebra} $(Y,\Delta)$ in $\cat{C}$ to be an object $Y$ along with a cocommutative comultiplication $\Delta$, that is a morphism $\Delta:Y\rightarrow Y\otimes Y$ satisfying the coassociativity condition which asserts that the following diagram is commutative:
\begin{equation}
\xymatrix{ Y\ar[r]^\Delta\ar[d]_\Delta & Y\otimes Y\ar[d]^{1_Y\otimes\Delta}\\
Y\otimes Y \ar[r]^{\Delta\otimes 1_Y} & Y\otimes Y\otimes Y}
\end{equation}
and also the cocommutative condition which asserts that $\tau_{Y,Y}\circ\Delta=\Delta$.
We will refer to $\Delta$ as the \emph{diagonal map of $Y$}.

The two examples we use in this paper are the category of pointed spaces, see Section~\ref{section_topologicalconsiderations} and the category of groups.
In both cases every object is a coalgebra in a unique way with the diagonal maps defined pointwise.
Another example is given by the cocommutative Hopf algebras in the category of $k$-algebras.

In defining the diagonal complex product indexed by $\Gam{}$, the first thing to consider are the coalgebras which we wish to take the product over.
If we wish to use the diagonal map with target $Y_i\times Y_j$ then the two coalgebras had better be the same.
To ensure this we label each element of $X$ using a function $l:X\rightarrow Z$ and ask for coalgebras $Y_i$ for $i\in Z$.
We extend this to a function $l:\Pf X\rightarrow Z\cup \cbrac{0}$, by letting $l(U)=i$ if $l(j)=i$ for each $j\in U$ and $l(U)=0$ if $l$ is not constant on $U$.
Let $l:X\rightarrow Z$ and $(\Gamma,\gamma)$ be a diagonal complex on $X$.
Then we call $(\Gamma,\gamma,l)$ a \emph{$Z$-labelled diagonal complex} if for each $U\in\Gamma$ and each $U'\in \gamma(U)$, we have $l(U')\neq 0$.
\begin{remark}
If $Z=\cbrac{1}$ then the constant map $l:X\rightarrow Z$ makes any diagonal complex into a $\cbrac{1}$-labelled diagonal complex.
Given a diagonal complex there is a universal labelling $l:X\rightarrow Z$, in the sense that if $l':X\rightarrow Z'$ is any other labelling then there exists a factorisation
\begin{equation}\label{eq_factorisation}\xymatrix{X\ar[r]^l\ar[dr]^{l'} & Z\ar[d] \\ & Z' .}\end{equation}
\end{remark}

\subsubsection{Preliminary construction}
In the category of pointed spaces there is an explicit construction of the diagonal complex product which does not work for general categories, but we shall give it first.
Let $\bdY=(Y_i)_{i\in Z}$ be a $Z$-tuple of pointed spaces and let $\Gam{}$ be a $Z$-labelled diagonal complex, then define $\bdY^X$ to be
\begin{equation}\prod_{i\in X} Y_{l(i)}.\end{equation}
For each $U\in\Gamma$ there is the map
\begin{equation} i_U: \bdY^U:=\prod_{U'\in\gamma(U)} Y_{l(U')}\rightarrow \bdY^X, \end{equation}
defined analagously to $D^{\cUi}$ in~\eqref{eq_diagonalmap}.
The space $\YpGam{}$ is defined as
\begin{equation}\label{eq_YpGam}\bigcup_{U\in\Gamma} \text{Im}\,i_U. \end{equation}
This will turn out to be isomorphic to the diagonal complex product of $(Y_i)_{i\in Z}$ indexed by $\Gaml{}$ as proved in Proposition~\ref{prop_dcproductcolimit}.

\subsubsection{Definition as a colimit}\label{section_defcolimit}
Let $(Y_i)_{i\in Z}$ be a $Z$-tuple of coalgebras in a suitable category $\cat{C}$ and let $\Gaml{}$ be a $Z$-labelled diagonal complex.
We will now formulate the definition of the diagonal complex product $\YGam{}$ as the colimit of a certain diagram.
Define $\mathcal{X}_l$ to be the subposet of $\mathcal{X}$ which contains only those partial partitions $\cUi$ with $l(U_j)\neq 0$ for each $U_j\in\cUi$.
Let $P_\Gamma$ be the subposet of $\mathcal{X}_l$ which contains each partition $\gamma(U)$ for $U\in\Gamma$ and which is closed under taking meets.
The category $\bG$ associated to $(\Gamma,\gamma)$ is the category of the poset $P_\Gamma$.
A functor $F$ from $\mathbf{\Gamma}$ to the category $\cat{C}$ is given as follows:
\begin{itemize}
\item an object $\cUi$ is taken to $\bigotimes_{i}(Y_{l(U_i)})$,
\item the image of a morphism $\cUi\rightarrow\cVj$ is the product of maps 
\begin{equation}
\bigotimes_{U_i=\amalg_{k\in A}V_k}\Delta_{l(U_i)}^{|A|-1}\otimes \bigotimes_{V_k\not\subseteq \cup_i U_i} p_{l(V_k)}
\end{equation}
consisting of a diagonal product for each $U_i$ and an initial morphism for each $V_k$ not contained in a $U_i$.
\end{itemize}
In the category of pointed spaces or groups the maps are those that take $(y_i)$ to $(y'_j)$, where if $V_j\subseteq U_i$ for some $i$ then $y'_j = y_i \in Y_{l(U_i)} = Y_{l(V_j)}$, otherwise $y'_j = \ast \in Y_{l(V_j)}$.
\begin{definition}
The \emph{diagonal complex product} $\YGam{}$ of a $Z$-tuple of coalgebras $\bdY=(Y_i)_{i\in Z}$ indexed by a $Z$-labelled diagonal complex $\Gaml{}$ is the colimit of $F$.
\end{definition}
Recall~\eqref{eq_YpGam} the definition of $\YpGam{}$.
\begin{proposition}\label{prop_dcproductcolimit}
Let $\cat{C}$ be the category of pointed spaces and $\bdY$ and $\Gaml{}$ be as in the definition above.
Then the diagonal complex product $\YGam{}$ is isomorphic to $\YpGam{}$.
\end{proposition}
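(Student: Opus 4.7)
The plan is to produce a natural map $\phi\colon\YGam{}\to\YpGam{}$ via the universal property of the colimit, and then to recognise it as the CW amalgamation map of Lemma~\ref{lemma_amalgamations}. To set up the cone, I would first extend the definition of $i_U$ from the preliminary construction: for an arbitrary partial partition $\{U_i\}\in\mathcal{X}_l$, let $i_{\{U_i\}}\colon\prod_{i}Y_{l(U_i)}\to\bdY^X$ be the pointed-space analogue of the map~\eqref{eq_diagonalmap}. Each such $i_{\{U_i\}}$ is a closed CW embedding whose image $\text{Im}\,i_{\{U_i\}}$ is a CW subcomplex of $\bdY^X$. The key combinatorial input, already established in Section~\ref{section_productdiagonals}, is that $\text{Im}\,i_{\{U_i\}}\cap\text{Im}\,i_{\{V_j\}}=\text{Im}\,i_{\{W_k\}}$ whenever $\{W_k\}$ is the meet of $\{U_i\}$ and $\{V_j\}$, and that inclusion of images corresponds precisely to partial coarsening.

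Next I would verify that the family $\{i_{\{U_i\}}\}_{\{U_i\}\in P_\Gamma}$ forms a cone over $F$ with apex $\bdY^X$. For a morphism $\{U_i\}\to\{V_j\}$ in $P_\Gamma$, the structure map assigned by $F$ inserts basepoints at parts $V_k$ not contained in $\bigcup_i U_i$ and applies iterated diagonals on each $U_i=\amalg V_k$; post-composing with $i_{\{V_j\}}$ reproduces $i_{\{U_i\}}$ by a direct coordinate-by-coordinate check. The universal property then furnishes a map $\phi\colon\YGam{}\to\bdY^X$ whose image is $\bigcup_{\{U_i\}\in P_\Gamma}\text{Im}\,i_{\{U_i\}}$. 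Since $\gamma(U)\in P_\Gamma$ for every $U\in\Gamma$ this image contains $\YpGam{}$, and conversely the remaining objects of $P_\Gamma$ are meets of the $\gamma(U)$ and so map into intersections of the $\text{Im}\,i_U$ which already sit inside $\YpGam{}$. Thus $\phi$ factors as $\YGam{}\to\YpGam{}\hookrightarrow\bdY^X$.

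To promote this factorisation to a homeomorphism, apply Lemma~\ref{lemma_amalgamations} to the cover $\mathcal{U}=\{\text{Im}\,i_U\}_{U\in\Gamma}$ of $\YpGam{}$ by CW subcomplexes. For a finite $A\subseteq\Gamma$ the intersection $\bigcap_{U\in A}\text{Im}\,i_U$ equals $\text{Im}\,i_{\{W_k^A\}}$ where $\{W_k^A\}=\bigwedge_{U\in A}\gamma(U)$, and every element of $P_\Gamma$ arises in this way because $P_\Gamma$ is by definition generated by $\{\gamma(U)\}_{U\in\Gamma}$ under meets. Hence the intersection diagram underlying Lemma~\ref{lemma_amalgamations} is identified with the diagram $F$ via the homeomorphisms $\prod_i Y_{l(U_i)}\cong\text{Im}\,i_{\{U_i\}}$, the amalgamation is $\YpGam{}$ by Lemma~\ref{lemma_amalgamations}, and $\phi$ is an isomorphism of CW complexes. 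The main obstacle I anticipate is precisely this final identification of indexing categories: the correspondence $A\mapsto\bigwedge_{U\in A}\gamma(U)$ from finite subsets of $\Gamma$ to $P_\Gamma$ is usually not injective, so I would need to verify carefully that the passage to $P_\Gamma$ does not alter the colimit, which amounts to a cofinality argument together with the fact that a partial partition (on the complement of the basepoint stratum) is recoverable from the subspace $\text{Im}\,i_{\{U_i\}}$.
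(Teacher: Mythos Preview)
Your proposal is correct and follows essentially the same route as the paper: identify the cover $\{\operatorname{Im} i_U\}_{U\in\Gamma}$ of $\YpGam{}$, compute its pairwise (and finite) intersections as $\operatorname{Im} i_{\{W_k\}}$ with $\{W_k\}$ the meet in $\mathcal{X}_l$, recognise the resulting intersection diagram as the functor $F$ on $P_\Gamma$, and invoke Lemma~\ref{lemma_amalgamations}. The paper's proof is terser and simply asserts that $F$ ``is the diagram consisting of the spaces $\operatorname{Im} i_{U_j}$ and their intersections''; your explicit cone construction and the cofinality remark about the surjection $A\mapsto\bigwedge_{U\in A}\gamma(U)$ make precise exactly the step the paper glosses over, but do not constitute a different argument.
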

\begin{proof}
The product $\YpGam{}$ is defined to be the union of the inclusions 
\begin{equation} i_U=D^{\gamma(U)}:\bdY^U\rightarrow \bdY^X.
\end{equation}
So in particular the images of $i_U$ cover the space $\YpGam{}$.
For $U_1\clc U_k\in\Gamma$ the intersection $\im i_{U_1}\cap\ldots\cap\im i_{U_m}$ is either a point or is given by $\im D^{\cbrac{W_k}}$ where $\cbrac{W_k}$ is the meet of the $\gamma(U_j)$ for $j=1\clc m$.

Hence the $\im i_{U_j}$ form a cover of $\YpGam{}$ and the functor $F:\bG\rightarrow\cbrac{\text{Pointed Spaces}}$ is the diagram consisting of the spaces $\im i_{U_j}$ and their intersections in $\YpGam{}$.
In the situation that all the spaces and maps are CW~complexes this implies that $\YpGam{}\cong \colim F=\YGam{}$ by Lemma~\ref{lemma_amalgamations}, see also~\cite{Hatcher2002} Section $4.\text{G}$.
\end{proof}
\begin{example}\label{example_Tcategory}
We defined a diagonal complex $\Gam{}$ in Example~\ref{example_T}.  The category $\bG$ is
\begin{equation} \vcenter{\xymatrix@C=1.4pc@R=0.4pc{&\dt\ar[dd]&\\ &&\\ &\dt&\\ &&\\
&\dt\ar[d]\ar[uu]&\\ \dt\ar[r]&\dt&\dt\ar[l]}}\end{equation}
and the corresponding diagram for coalgebras $Y_1$ and $Y_2$ is given by
\begin{equation} \vcenter{\xymatrix@C=2.4pc@R=0.4pc{&Y_2\ar[dd]^{p\otimes 1}&\\ &&\\ &Y_1\otimes Y_2&\\ &&\\
&Y_1\ar[d]^\Delta\ar[uu]_{1\otimes p}&\\ Y_1\ar[r]^-{1\otimes p}&Y_1\otimes Y_1 &Y_1\ar[l]_-{p\otimes 1},}}\end{equation}
where the morphisms from the initial object are denoted $p$ and the identity morphisms are denoted $1$.
\end{example}

\subsubsection{Geometric Realisation of a diagonal complex}\label{section_dcrealisation}
Let $(\Gamma,\gamma)$ be a diagonal complex.
With the trivial labelling $l:X\rightarrow \cbrac{1}$, it is made into a $\cbrac{1}$-labelled diagonal complex.
We take $(Y_1,\ast)=(\sbrac{0,1},0)$ the unit interval and write $I\Gam{}$ for the diagonal complex product indexed by $(\Gamma,\gamma,l)$.
This is a subspace of $I^X$ and we define $\mbrac{\Gam{}}$ to be the intersection of $I\Gam{}$ and the space
\begin{equation} \cbbrac{(y_i)_{i\in X}\mid \cbrac{i\mid y_i\neq 0} \text{ if finite and }\sum_i y_i = 1}. \end{equation}

\subsubsection{Levelwise construction of products of pointed spaces}\label{section_levelwiseproduct}
We previously defined the level of a $U\in\Gamma$ and saw that there was a filtration
\begin{equation} (X^+ = \Gamma_0,\gamma_0) \leq (\Gamma_1,\gamma_1) \leq (\Gamma_2,\gamma_2) \leq \ldots. \end{equation}
Our aim now is to describe how the diagonal complex product $\YGam{n}$ may be built from that of $\YGam{n-1}$.
Let $U\in\Gam{n}$ be of level $n$ and write $\gamma(U)=\cbrac{U_1\clc U_k}$.  Then the maximal faces $U-U_i$ are all of level $n-1$ or less.
For each proper subset $A\subsetneq\sbrac{k}$ there is an inclusion
\begin{equation}
 \prod_{i\in A}Y_{l(U_i)}\hookrightarrow \prod_{i\in\sbrac{k}}Y_{l(U_i)}=\bdY^U,
\end{equation}
we call this subspace $\bdY^U_A$ and the union of such subspaces we will denote $\delta\bdY^U$.  This is the subspace consisting of elements of $\bdY^U$ where at least one of the coordinates is equal to $\ast$.
From the definition of a diagonal complex the set $U_A=\bigcup_{i\in A}U_i$ is in $\Gam{n-1}$ and $\gamma(U_A)$ is a refinement of $\cUi_{i\in A}$.  
Therefore there is a map
\begin{equation}
 \bdY^U_A\rightarrow \bdY^{U_A}\rightarrow\YGam{n-1},
\end{equation}
which realises $\bdY^U_A$ as the intersection of $\bdY^U$ and $\bdY^{U_A}$ in $\YGam{n}$.
Taking the union of the $\bdY^U_A$ we get that the intersection of $\bdY^U$ and $\YGam{n-1}$ is $\delta\bdY^U$.
So we have a diagram
\begin{equation}\bdY^U \leftarrow \delta\bdY^U\rightarrow \YGam{n-1}\end{equation}
whose colimit attaches $\bdY^U$ onto $\YGam{n-1}$.
In this way $\YGam{n}$ is given by attaching each $U$ of level $n$.

\subsection{Diagonal complex products of groups}
Let $\Gaml{}$ be a $Z$-labelled diagonal complex, where $Z=\cbrac{1\clc m}$.  Let $\bdG=(G_1\clc G_m)$ be an $m$-tuple of groups.
The functor $F$ from $\bG$, which was defined in Section~\ref{section_defcolimit}, to the category of groups is given as follows
\begin{itemize}
\item an object $\cUi$ is taken to $\prod_{i}(G_{l(U_i)})$.
\item the image of a morphism $\cUi\rightarrow\cVj$ is the map that takes $(g_i)$ to $(g'_j)$, where if $V_j\subseteq U_i$ for some $i$ then $g'_j = g_i \in G_{l(U_i)} = G_{l(V_j)}$, otherwise $g'_j = e \in G_{l(V_j)}$.
\end{itemize}
The diagonal complex product indexed by $\Gaml{}$ of these groups was defined to be $\GGam{}:=\colim F$.
The canonical inclusions of $F(\cUi)$ into $\GGam{}$ are denoted~$i_{\cUi}$.

The fundamental group functor 
\begin{equation}\pi_1:\cbrac{\text{Connected Pointed Spaces}}\rightarrow \cbrac{\text{Groups}}\end{equation}
preserves colimits and finite products, so the fundamental group of the diagonal complex product of pointed spaces $Y_i$ is the diagonal complex product of the groups $\pi_1(Y_i)$.

\begin{theorem}\label{thm_Gdc}
Let $\Gaml{}$ be a $Z$-labelled diagonal complex and let $\bdG=(G_i)_{i\in Z}$ be a $Z$-tuple of groups.  Then $\GGam{}$ is generated by elements
\begin{equation}g_U\end{equation}
where $U\in\Gamma$ is such that $l(U)\neq 0$ and $g\in G_{l(U)}$.  These are subject to the relations
\begin{align}
g_U.h_U &= (gh)_U &&\text{for any $g_U, h_U$}\label{eq_rel1},\\
e_U &= e &&\text{for each $U$}\label{eq_rel2},\\
\sbrac{g_U,h_V} &= e &&\text{for $U,V\in \gamma(W)$ and $W\in\Gamma$}\label{eq_rel3},\\
g_U &= g_{U_1}\ldots g_{U_k} &&\text{where $\gamma(U)=\cbrac{U_1\clc U_k}$.}\label{eq_rel4}
\end{align}
These relations suffice to present $\GGam{}$.
The element $g_U$ is given by
\begin{equation}\label{eq_gu}
g_U = i_{\gamma(U)}\circ\Delta_{G_{l(U)}}^{\dim U-1}(g).
\end{equation}
\end{theorem}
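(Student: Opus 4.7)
The plan is to show that the natural homomorphism $H\to\GGam{}$ from the presented group to the colimit, defined by~\eqref{eq_gu}, is an isomorphism, by constructing the inverse via the colimit's universal property.

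I first verify~\eqref{eq_gu} and check the four relations in $\GGam{}$. Because $l(U)\neq 0$ forces $l(U')=l(U)$ for every $U'\in\gamma(U)$, the iterated diagonal $\Delta^{\dim U-1}:G_{l(U)}\to F(\gamma(U))$ is a group homomorphism, so~\eqref{eq_rel1} and~\eqref{eq_rel2} hold. For~\eqref{eq_rel3} and~\eqref{eq_rel4}, the key observation is that for any $U\in\gamma(W)$ the singleton partial partition $\{U\}$ equals the meet $\gamma(W)\wedge\gamma(U)$ in $P_\Gamma$; the cocone structure of $\GGam{}$ then forces $g_U$ to equal the image of the single-factor embedding $g\mapsto(e,\ldots,g,\ldots,e)\in F(\gamma(W))$. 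Since elements supported on distinct factors of the product $F(\gamma(W))$ commute, this gives $[g_U,h_V]=e$ for $U,V\in\gamma(W)$; multiplying the single-factor embeddings over all parts gives $g_U=g_{U_1}\cdots g_{U_k}$. The same factorization argument shows that the $g_U$ generate $\GGam{}$.

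For the main step---that the relations suffice---I construct a cocone $\phi_\dt$ on $F$ with vertex $H$ by $\phi_{\cUi}((g_i)_i)=\prod_i(g_i)_{U_i}$. When $\cUi=\gamma(W)$, the factors commute by~\eqref{eq_rel3}; naturality against morphisms of $\bG$ reduces to re-expressing a generator attached to a part as the product of generators along a refinement, which follows by iterating~\eqref{eq_rel4} together with~\eqref{eq_rel1}. The universal property then produces the desired inverse map $\GGam{}\to H$. The main obstacle is extending the cocone definition to general $\cUi\in P_\Gamma$: each part $U_i$ must lie in $\Gamma$ for $(g_i)_{U_i}$ to be a generator, and the required factor commutations must be derivable in $H$. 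Both are handled by a structural lemma, proved by induction on the number of meets used to build $\cUi$ from the $\gamma(W)$'s, showing that every part of such a $\cUi$ is a face of some common $W\in\Gamma$ used in the construction; the simplicial condition~(3) then gives the membership in $\Gamma$, and the commutations reduce to~\eqref{eq_rel3} on parts of $\gamma(W)$ combined with~\eqref{eq_rel4}.
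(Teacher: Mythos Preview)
Your approach is correct and genuinely different from the paper's. The paper argues inductively along the level filtration $\GGam{0}\to\GGam{1}\to\cdots$: at level~$0$ one has the free product (relations~\eqref{eq_rel1},~\eqref{eq_rel2}), and attaching a $U$ of level $n$ is the pushout of $\bdG^U\leftarrow\delta\bdG^U\to\GGam{n-1}$; the key observation is that $\delta\bdG^U\cong\bdG^U$ whenever $\dim U>2$, so only two-dimensional $U$'s contribute and each such pushout imposes exactly one commutator~\eqref{eq_rel3}. Relation~\eqref{eq_rel4} is then just the definition of the redundant generators. This explains \emph{structurally} why no further relations appear.

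Your argument instead goes straight through the universal property, and the load-bearing step is the existence of the cocone~$\phi_\dt$. Your structural lemma is essentially the trivial fact that a meet is $\leq_{pc}$ each of its inputs, so any $\cUi\in P_\Gamma$ satisfies $\cUi\leq_{pc}\gamma(W)$ for a single $W$, making every part a face $W_A$ of that $W$. One point deserves to be stated more explicitly than you do: the commutation of $(g)_{W_A}$ with $(h)_{W_B}$ for disjoint $A,B$ is \emph{not} literally an instance of~\eqref{eq_rel3}; you first need the auxiliary identity $(g)_{W_A}=\prod_{a\in A}(g)_{W_a}$ in $H$, which one proves by induction on $|W_A|$ using~\eqref{eq_rel4} and the simplicial condition (that $\gamma(W_A)$ refines $\{W_a:a\in A\}$). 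With that identity in hand, both the well-definedness of $\phi_{\cUi}$ and its naturality follow cleanly. Your route is more categorical and avoids the level machinery; the paper's route makes transparent why only the two-dimensional cells matter.
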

\begin{proof}
To give a presentation of $\bdG\Gam{}$ we look to the levelwise construction.  The group $\bdG\Gam{}$ is the colimit of the diagram.
\begin{equation} \bdG\Gam{0} \rightarrow \bdG\Gam{1} \rightarrow \bdG\Gam{2} \rightarrow \ldots. \end{equation}
So it will suffice to prove the theorem for each level, which we will do by induction.
Note that $\bdG\Gam{0}$ is the free product $\ast_{i\in X} G_i$ and that $\bdG\Gam{1}$ is a graph product of groups.
For the case $n=0$ the theorem can easily be seen to be true.
As with the levelwise construction of spaces in Section~\ref{section_levelwiseproduct}, we construct each successive group by amalgamations,
\begin{equation}\bdG^U \leftarrow \delta\bdG^U\rightarrow \GGam{n-1}\end{equation}
where $U$ is of level $n$, $\gamma(U)=\cbrac{U_1\clc U_k}$, the group $\bdG^U$ is $\prod_{i\in\sbrac{k}} G_{l(U_i)}$ and $\delta\bdG^U$ is the colimit of the diagram consisting of all the groups $\prod_{i\in A}G_{l(U_i)}$ for $A\in\delta\Delta_k$.
However in the category of groups, if the dimension of $U$ is greater than two, then $\delta\bdG^U \cong \bdG^U$.
Only in the case when the dimension of $U$ is two, when 
\begin{equation}\delta\bdG^U = G_{l(U_1)}\ast G_{l(U_2)}\end{equation}
and so the diagram looks like
\begin{equation}\label{eq_levelcommutator}
G_{l(U_1)}\times G_{l(U_2)} \leftarrow G_{l(U_1)}\ast G_{l(U_2)}\rightarrow \GGam{n-1}.
\end{equation}
does taking the colimit of the diagram have any effect.  The effect in question is that of adding commutation relations.
Even though the amalgamation may not change the group it is still a good time to prove that the relations~\eqref{eq_rel1}-\eqref{eq_rel4} hold for $U$.
The group $\bdG^U$ is a copy of $\prod_{U'\in\gamma(U)}G_{l(U')}$ and each of the factors are seen to be one of the $F(\cbrac{U'})$, which map diagonally into $F(\gamma(U'))$.
Hence by~\eqref{eq_gu} each of the factors consists of the elements $g_{U'}$ for $g\in G_{l(U')}$, and so relation~\eqref{eq_rel3} is seen to hold.
Now assume that $l(U)\neq 0$.  Then $g_U$ for each $g$ is given by the diagonal $G_{l(U)} \rightarrow \bdG^U$, hence $g_U = \prod_{U'\in\gamma(U)}g_{U'}$ giving relation~\eqref{eq_rel4}.
Finally relations~\eqref{eq_rel1} and~\eqref{eq_rel2} are also given by the inclusion $G_{l(U)}\rightarrow \bdG^U$.
\end{proof}
Note that the set $\cbrac{g_U\mid U\in X^+}$ generates the group and that the remaining $g_U$ are defined for notational convenience.

\subsubsection{Diagonal right-angled Artin groups}
Let $\Gam{}$ be a diagonal complex and consider it to be $\cbrac{1}$-labelled.
Let $G_1 = \ZZ$, then we call $\GGam{}$ a \emph{diagonal right-angled Artin group}, or DRAAG.
The DRAAGs represent a generalisation of right-angled Artin groups, or RAAGs, see~\cite{Charney2007b} for an introduction to their theory.
One may study the space $\YGam{}$ with $Y_1=S^1$, however it is not clear whether there's a concise combinatorial property which allows us to determine whether $\YGam{}$ is aspherical or not.
Using the approach of CAT(0) geometry is not sufficient; there are spaces $\YGam{}$ which are aspherical but not CAT(0), an example is given by $\bdY\Gf$ defined in Section~\ref{section_forests}. 

\subsection{Homology of diagonal complex products}
Let $Y$ be a pointed space and let $C_\ast(Y)$ be the chain complex of the space $Y$ over a ring $R$.
Then $C_\ast(Y)$ splits as $R\oplus\rC(Y)$, where $\rC(Y)$ is the reduced chain complex.
Recall that there is a decomposition of the homology of a product of pointed spaces:
\begin{proposition}\label{prop_Hdecomp}
Let $Y_i$ be pointed spaces for $i=1\clc n$.  
Then the homology of the direct product of the pointed spaces is given as
\begin{equation}H_\ast\brac{\prod_{i=1}^n Y_i} = R\oplus\bigoplus_{A\in\Delta_n} H_\ast\brac{\bigotimes_{i\in A} \rC(Y_i)},\end{equation}
where $\Delta_n = \Pf[n]$ is the $n$-simplex.
\end{proposition}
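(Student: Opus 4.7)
The plan is to reduce to the chain level by the Eilenberg--Zilber theorem and then expand a tensor product of binary direct sums. Since the paper restricts to CW complexes (and the product is the CW product, as noted in Section~\ref{section_topologicalconsiderations}), the Eilenberg--Zilber theorem gives a natural chain homotopy equivalence
\begin{equation*}
C_\ast\brac{\prod_{i=1}^n Y_i} \;\simeq\; \bigotimes_{i=1}^n C_\ast(Y_i).
\end{equation*}

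Next I would use the basepoint splitting. For each pointed space $(Y_i,\ast_i)$ the inclusion of the basepoint induces a natural chain map $R=C_\ast(\text{pt})\hookrightarrow C_\ast(Y_i)$, split by the augmentation, so $C_\ast(Y_i)\cong R\oplus\rC(Y_i)$ as chain complexes. Substituting this decomposition into the tensor product and expanding (tensor product distributes over direct sums in each slot) yields
\begin{equation*}
\bigotimes_{i=1}^n\!\bigl(R\oplus\rC(Y_i)\bigr) \;\cong\; \bigoplus_{A\subseteq[n]}\,\bigotimes_{i\in A}\rC(Y_i),
\end{equation*}
where on the right the empty subset contributes the copy of $R$, and the non-empty subsets $A\in\Delta_n=\Pf[n]$ contribute the remaining summands.

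Finally I would take homology of both sides. Homology commutes with (arbitrary) direct sums of chain complexes, so
\begin{equation*}
H_\ast\brac{\prod_{i=1}^n Y_i}\;\cong\;H_\ast\bracg{\bigoplus_{A\subseteq[n]}\bigotimes_{i\in A}\rC(Y_i)}\;\cong\;R\oplus\!\!\bigoplus_{A\in\Delta_n}\!H_\ast\brac{\bigotimes_{i\in A}\rC(Y_i)},
\end{equation*}
which is the asserted identity. There is no real obstacle here: the statement is written on the level of $H_\ast$ of a tensor product of reduced chain complexes (and not as a further Künneth expansion into $H_\ast(Y_i)$ factors), so no Tor terms intervene and the argument is purely formal once Eilenberg--Zilber and the basepoint splitting are in hand. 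The only point requiring any care is that the chain-level isomorphisms are natural in the pointed spaces, which ensures that the splitting respects the grading on chain complexes before passing to homology.
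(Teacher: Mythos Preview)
Your proof is correct and is essentially the same as the paper's own argument: the paper invokes ``a K\"unneth formula'' to identify $H_\ast\bigl(\prod_i Y_i\bigr)$ with $H_\ast\bigl(\bigotimes_i C_\ast(Y_i)\bigr)$ (i.e.\ the Eilenberg--Zilber equivalence at the chain level), then substitutes $C_\ast(Y_i)\cong R\oplus\rC(Y_i)$, expands the tensor product over subsets of $[n]$, and uses that $H_\ast$ commutes with direct sums. Your write-up matches this step for step.
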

\begin{proof}
Using the definition of the homology and a K\"unneth formula we have
\begin{align*}
H_\ast\brac{\prod_{i=1}^n Y_i} &\cong H_\ast\brac{C_\ast\brac{\prod_{i=1}^n Y_i}} \\
&\cong H_\ast\brac{\bigotimes_{i=1}^n C_\ast(Y_i)}.
\end{align*}
By using the decomposition $C_\ast(Y_i)\cong R\oplus \rC(Y_i)$ we have
\begin{align*}
\bigotimes_{i=1}^n C_\ast(Y_i) &\cong \bigotimes_{i=1}^n (R\oplus \rC(Y_i))\\
 &\cong \bigoplus_{A\subseteq\sbrac{n}} \bigotimes_{i\in A} \rC(Y_i).
\end{align*}
We are done because $H_\ast$ commutes with $\oplus$ and the $R$ term comes from $A=\emptyset\subset\sbrac{n}$.
\end{proof}
Using this we get a similar decomposition of the homology of a diagonal complex product.
\begin{theorem}\label{thm_Hdc}
Let $\Gam{}$ be a $Z$-labelled diagonal complex on a set $X$ and $\bdY=(Y_i)_{i\in Z}$ be pointed spaces.
The homology of the diagonal complex product $\YGam{}$ splits as
\begin{equation}\label{eq_Hdecomp}
R\oplus\bigoplus_{U\in\Gamma} H_\ast(U),
\end{equation}
where $H_\ast(U)$ is given by
\begin{equation}H_\ast\Brac{\bigotimes_{U'\in\gamma(U)} \rC(Y_{l(U')})}. \end{equation}
\end{theorem}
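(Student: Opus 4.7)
The plan is to induct on the level filtration $\Gamma_0 \leq \Gamma_1 \leq \cdots$ of $\Gamma$, with inductive claim
\[
H_\ast(\YGam{n}) \cong R \oplus \bigoplus_{U \in \Gamma_n} H_\ast(U).
\]
The base case $n = 0$ reduces to the standard decomposition of the homology of a wedge, since $\YGam{0} = \bigvee_{i \in X} Y_{l(i)}$ and, for each singleton $U = \cbrac{i} \in X^+$, the (necessarily trivial) partition $\gamma(U) = \cbrac{U}$ gives $H_\ast(U) = H_\ast\bigl(\rC(Y_{l(i)})\bigr) = \widetilde{H}_\ast(Y_{l(i)})$. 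Passing to the colimit at the end is automatic because singular homology commutes with directed colimits of CW inclusions.

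For the inductive step, the levelwise construction of Section~\ref{section_levelwiseproduct} presents $\YGam{n}$ as built from $\YGam{n-1}$ by the pushout that attaches $\bdY^U$ along $\delta\bdY^U$, one for each $U$ of level exactly $n$. Since each $\delta\bdY^U \hookrightarrow \bdY^U$ is a CW inclusion, the cofibre of $\YGam{n-1} \hookrightarrow \YGam{n}$ is $\bigvee_{l(U)=n} \bdY^U/\delta\bdY^U$, and the quotient $\bdY^U/\delta\bdY^U$ is by construction the smash product $\bigwedge_{U' \in \gamma(U)} Y_{l(U')}$. Its reduced chain complex is $\bigotimes_{U' \in \gamma(U)} \rC(Y_{l(U')})$, so
\[
H_\ast(\YGam{n}, \YGam{n-1}) \cong \bigoplus_{l(U) = n} H_\ast(U).
\]

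It then remains to split the long exact sequence of $(\YGam{n}, \YGam{n-1})$. Applying Proposition~\ref{prop_Hdecomp} to $\bdY^U \cong \prod_{U' \in \gamma(U)} Y_{l(U')}$, the summand indexed by $A = \gamma(U)$ contributes $H_\ast(U)$ while the remaining summands (together with $R$) coincide with $H_\ast(\delta\bdY^U)$, as can be read off from the split short exact sequence of reduced chain complexes induced by the cofibre sequence $\delta\bdY^U \to \bdY^U \to \bdY^U/\delta\bdY^U$. Hence $H_\ast(\delta\bdY^U) \hookrightarrow H_\ast(\bdY^U)$ is split injective with complement $H_\ast(U)$, which forces the connecting map of the pair $(\bdY^U, \delta\bdY^U)$ to vanish. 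The connecting map of $(\YGam{n}, \YGam{n-1})$ factors through the direct sum of these vanishing maps, so it vanishes too, and the natural maps $H_\ast(\bdY^U) \to H_\ast(\YGam{n})$ provide compatible splittings. Combining with the inductive hypothesis yields the formula at level $n$, and the colimit step finishes the proof.

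The hardest part will be the identification $H_\ast(\delta\bdY^U) \cong R \oplus \bigoplus_{\emptyset \neq A \subsetneq \gamma(U)} H_\ast\bigl(\bigotimes_{U' \in A} \rC(Y_{l(U')})\bigr)$, together with the coherence of the level-by-level splittings with the maps into $\YGam{n-1}$ — the latter relying on the simplicial condition in the definition of a diagonal complex, which ensures that every face $\bdY^U_A$ factors through some $\bdY^{U_A}$ with $U_A \in \Gamma_{n-1}$ in a way that respects the Künneth-style decompositions on both sides.
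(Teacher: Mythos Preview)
Your proof is correct and follows the same architecture as the paper's: induction on the level filtration, base case the wedge, inductive step via the attaching diagram $\YGam{n-1}\leftarrow\delta\bdY^U\rightarrow\bdY^U$, and the K\"unneth decomposition of Proposition~\ref{prop_Hdecomp} to identify the new contribution as $H_\ast(U)$.

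The only substantive difference is where the argument lives. The paper works at the chain level: it observes that $C_\ast(\delta\bdY^U)\hookrightarrow C_\ast(\bdY^U)$ is a split monomorphism with cokernel $\bigotimes_{U'\in\gamma(U)}\rC(Y_{l(U')})$, and separately that $C_\ast(\delta\bdY^U)\to C_\ast(\YGam{n-1})$ is injective (because each $Y_{l(U_i)}\to\bdY^{U_i}$ is a diagonal inclusion, hence injective on chains). The pushout of chain complexes therefore just appends the tensor factor as a direct summand. You instead pass to homology, identify the cofibre $\bdY^U/\delta\bdY^U$ with the smash product, and split the long exact sequence of the pair by showing the connecting homomorphism vanishes. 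Both routes are sound, and the ``coherence'' worry you flag at the end is exactly the content of the paper's second injectivity statement, handled there in one line via the diagonal maps.

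What the paper's chain-level version buys, and yours does not directly, is the natural quasi-isomorphism~\eqref{eq_quasiiso}. This matters downstream: Theorem~\ref{thm_homotopyquotient} needs the decomposition to be a morphism of $H\rtimes\sym$-modules at the chain level, not just an abstract isomorphism of graded homology groups. Your splitting of the long exact sequence is not manifestly natural in that sense, so if you intend to use the result equivariantly it is worth upgrading to the chain-level formulation.
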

\begin{proof}
We proceed by induction on the level.  For the case of $n=0$, the product is a wedge product of the spaces $Y_i$ and so the theorem holds.
Now suppose that~\eqref{eq_Hdecomp} holds for level $n-1$, that is for the diagonal complex $\Gam{n-1}$.
In Section~\ref{section_levelwiseproduct} the space $\YGam{n}$ was formed by gluing copies of $\bdY^U$ onto $\YGam{n-1}$ using diagrams of the form
\begin{equation}\label{eq_Hgluing}
\YGam{n-1} \leftarrow \delta\bdY^U \rightarrow \bdY^U.
\end{equation}
Write $\gamma(U)=\cbrac{U_1\clc U_k}$.
Using Proposition~\ref{prop_Hdecomp} we are able to write 
\begin{equation}C_\ast(\delta\bdY^U)\rightarrow C_\ast(\bdY^U)\end{equation}
as a split monomorphism
\begin{equation*}
R\oplus \bigoplus_{A\in\delta\Delta_k} \bigotimes_{i\in A} \rC(Y_{l(U_i)}) \hookrightarrow
R\oplus \bigoplus_{A\in\Delta_k} \bigotimes_{i\in A} \rC(Y_{l(U_i)}).
\end{equation*}
For each $U_i$ there is a map $Y_{l(U_i)} \rightarrow \bdY^{U_i}\cong Y_{l(U_i)}^{\dim U_i}$ given by the diagonal map, and this induces the inclusion $\delta\bdY^U \hookrightarrow \YGam{n-1}$. 
On chains the map $Y_{l(U_i)} \rightarrow \bdY^{U_i}$ induces an injection 
\begin{equation} C_\ast(Y_{l(U_i)}) \rightarrow C_\ast(\bdY^{U_i}) \end{equation}
and so the map $C_\ast(\delta\bdY^U)\rightarrow C_\ast(\YGam{n-1})$ is injective.
Taking the colimit of~\eqref{eq_Hgluing} has the effect of adding on a term 
\begin{equation}\bigotimes_{i=1}^k \rC(Y_{l(U_i)}),\end{equation}
which gives the term $H_\ast(U)$ in~\eqref{eq_Hdecomp}.  Thus we have proved the theorem.
\end{proof}
\begin{remark}[cohomological version of Theorem~\ref{thm_Hdc}]\label{rem_coHdc}
The cohomological version is similar, although because taking cochains is contravariant, the colimit is replaced by a limit and so the direct sum should be replaced by the direct product.
In the case that $\Gamma$ and $X$ are finite the direct product is isomorphic to the direct sum and so we have that the cohomology of the diagonal complex product $\YGam{}$ splits as
\begin{equation}\label{eq_coHdecomp}
R\oplus\bigoplus_{U\in\Gamma} H^\ast(U),
\end{equation}
where $H^\ast(U)$ is given by
\begin{equation}H^\ast\Brac{\bigotimes_{U'\in\gamma(U)} \rcC(Y_{l(U')})} \end{equation}
and
\begin{equation}C^\ast(Y_{l(U')}, R) \cong R\oplus\rcC(Y_{l(U')}). \end{equation}
\end{remark}
\begin{remark}
It is worth noting that the proof of the theorem above offers more than a calculation of the homology of diagonal complex products, there is also a natural quasi-isomorphism realising this equivalence
\begin{equation}\label{eq_quasiiso}
R\oplus \bigoplus_{U\in\Gamma} \biggl(\bigotimes_{U'\in\gamma(U)} \rC(Y_{l(U')})\biggr)\rightarrow C_\ast\bigl(\bdY\Gam{}\bigr).
\end{equation}
The importance of this will become clear in Theorem~\ref{thm_homotopyquotient}.  There is also a cohomological version, where of course the arrow is reversed
\begin{equation}
R\oplus \bigoplus_{U\in\Gamma} \biggl(\bigotimes_{U'\in\gamma(U)} \rcC(Y_{l(U')})\biggr)\leftarrow C^\ast\bigl(\bdY\Gam{}\bigr).
\end{equation}
\end{remark}

\subsubsection{Hilbert-Poincar\'e series}
The theorem motives looking at the Hilbert-Poincar\'e series of a $Z$-labelled diagonal complex $\Gaml{}$.
To each simplex $U\in\Gamma$ we assign a monomial in the elements of $Z=\cbrac{z_1\clc z_k}$,
\begin{equation}\label{eq_monomial} m(U) = \prod_{U'\in\gamma(U)} l(U').\end{equation}
The Hilbert-Poincar\'e series of $\Gam{}$ in the polynomial ring $\ZZ[Z]$ is
\begin{equation}h_{\Gam{}}(z_1\clc z_k) = \sum_{U\in\Gamma} m(U). \end{equation}
Let $\mathcal{R}\ttt$ be the complete $\NN$-graded ring generated by the indecomposable modules of $R$ with the product $\Tor^R$.
We use the variable $t$ to keep track of the grading.  For example if $R=\ZZ$ then the indecomposable modules are of the form $\ZZ$ and $\ZZ/(p^i)$ for a prime $p$.
The module $\ZZ$ is the unit of $\Tor^\ZZ$ and we will denote $\ZZ/(p^i)$ by $x_{p^i}$.
Then
\begin{equation}\label{eq_tormult} x_{p^i}.x_{q^j} = \begin{cases}
(1+t)x_{p^i} & \text{ if $p=q$ and $i\leq j$,}\\
0 & \text{ if $p\neq q$}
\end{cases}\end{equation}
is used to denote the fact that
\begin{equation}
\Tor^\ZZ_k \brac{\ZZ/(p^i),\ZZ/(q^j)} \cong\begin{cases}
\ZZ/(p^i) & \text{ if $p=q$, $i\leq j$ and $k=0,1$},\\
0 & \text{ if $p\neq q$ and $k=0,1$,}\\
0 & \text{ if $k\geq 2$}.
\end{cases}
\end{equation}
Now let $y_i(t)$ and $y'_i(t)$ in $\mathcal{R}\ttt$ be the Hilbert-Poincar\'e series of $H_\ast(Y_i)$ and $H^\ast(Y_i)$ respectively.
\begin{corollary}\label{cor_hilb}
The Hilbert-Poincar\'e series of the homology and the cohomology of the diagonal complex product $\bdY\Gam{}$ are respectively
\begin{equation*}1+h_{\Gam{}}(y_1-1\clc y_k-1)\quad\text{ and }\quad 1+h_{\Gam{}}(y'_1-1\clc y'_k-1). \end{equation*}
\end{corollary}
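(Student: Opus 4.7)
The plan is to read off the Hilbert-Poincar\'e series directly from Theorem~\ref{thm_Hdc} (respectively Remark~\ref{rem_coHdc}) and to match it against the defining formula for $h_{\Gam{}}$, using the fact that the multiplication on $\mathcal{R}\ttt$ was defined precisely to encode the K\"unneth formula.

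First I would record that the splitting $C_\ast(Y_i,R) \cong R \oplus \rC(Y_i,R)$ gives $H_\ast(\rC(Y_i,R))$ Hilbert-Poincar\'e series $y_i(t)-1$ in $\mathcal{R}\ttt$. Iterating the K\"unneth formula, the series of
\[ H_\ast\Bigl(\bigotimes_{U' \in \gamma(U)} \rC(Y_{l(U')},R)\Bigr) \]
is then the $\mathcal{R}\ttt$-product $\prod_{U' \in \gamma(U)} (y_{l(U')}-1)$, which is precisely the monomial $m(U)$ of~\eqref{eq_monomial} with each $z_i$ replaced by $y_i(t)-1$.

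Next I would invoke Theorem~\ref{thm_Hdc} to write $H_\ast(\bdY\Gam{}) \cong R \oplus \bigoplus_{U \in \Gamma} H_\ast(U)$. Summing the series over the direct summands and adding the $1$ contributed by the initial $R$ summand yields
\[ 1 + \sum_{U \in \Gamma} \prod_{U' \in \gamma(U)} (y_{l(U')}-1) = 1 + h_{\Gam{}}(y_1-1, \ldots, y_k-1), \]
establishing the homological identity. The cohomological half is formally identical, using Remark~\ref{rem_coHdc} in place of Theorem~\ref{thm_Hdc} and $y'_i$ in place of $y_i$; the only observation needed is that under the mild finiteness hypothesis implicit in the statement, the direct product appearing there has the same Hilbert-Poincar\'e series as the corresponding direct sum.

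The only step requiring any genuine thought is justifying that the formal product in $\mathcal{R}\ttt$ really computes the series of a tensor product of chain complexes whose homology is expressed in terms of indecomposable $R$-modules. This is essentially bookkeeping: the defining relation~\eqref{eq_tormult} records the K\"unneth short exact sequence for two cyclic $\ZZ$-modules, with the variable $t$ tracking the degree shift on the $\Tor^1$ term, and the general case reduces to this by decomposing each homology module into indecomposables and appealing to the bilinearity of $\otimes$ and $\Tor$. For rings $R$ more general than a PID one would enlarge $\mathcal{R}\ttt$ to encode higher Tor groups, but the argument is otherwise unchanged.
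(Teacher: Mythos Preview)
Your proposal is correct and matches the paper's intended reading: the paper states this as an unproved corollary, meant to be read off directly from Theorem~\ref{thm_Hdc} and Remark~\ref{rem_coHdc} together with the definitions of $m(U)$ and $h_{\Gam{}}$, which is exactly what you do.
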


\subsubsection{The cohomology ring}
We may use the inclusion $\YGam{}\hookrightarrow \bdY^X$ to calculate the cup product on cohomology.
\begin{lemma}\label{lem_inj}
The map $H^\ast\bbrac{\bdY^X,R}\rightarrow H^\ast\bbrac{\YGam{},R}$ is surjective.
\end{lemma}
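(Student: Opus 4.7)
The plan is to exhibit explicit preimages in $H^\ast(\bdY^X)$ for the generators of each summand of $H^\ast(\bdY\Gam{})$, using the Künneth decomposition of the source and the splitting of Remark~\ref{rem_coHdc} for the target.

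Write
\[
H^\ast(\bdY^X,R)\cong R\oplus\bigoplus_{\emptyset\neq A\subseteq X}H^\ast\Brac{\bigotimes_{x\in A}\rcC(Y_{l(x)})}
\]
from the cohomological Künneth formula, and
\[
H^\ast(\bdY\Gam{},R)\cong R\oplus\bigoplus_{U\in\Gamma}H^\ast(U),\qquad H^\ast(U)=H^\ast\Brac{\bigotimes_{U'\in\gamma(U)}\rcC(Y_{l(U')})}
\]
from Remark~\ref{rem_coHdc}. Inspecting the proof of Theorem~\ref{thm_Hdc}, the projection onto the $U$-summand of the target is realised as restriction along the subspace inclusion $\bdY^U\hookrightarrow\bdY\Gam{}$ followed by the projection of $H^\ast(\bdY^U)$ onto its top Künneth piece.

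For each $U\in\Gamma$ with $\gamma(U)=\{U_1,\ldots,U_k\}$, choose representatives $x_i\in U_i$ and set $A_U=\{x_1,\ldots,x_k\}$. Since $l(x_i)=l(U_i)$, the Künneth summand indexed by $A_U$ is canonically identified with $H^\ast(U)$. Given a class $\alpha_1\otimes\cdots\otimes\alpha_k\in H^\ast(U)$, define the lift $c_U(\alpha)\in H^\ast(\bdY^X)$ by placing $\alpha_i$ at coordinate $x_i$ and the unit at every other coordinate. The composite $\bdY^U\hookrightarrow\bdY\Gam{}\hookrightarrow\bdY^X$ is the diagonal embedding, and pulling $c_U(\alpha)$ back along it produces, at the $i$-th factor of $\bdY^U$, the cup product of the components of $c_U(\alpha)$ at positions lying in $U_i$, namely $\alpha_i\cup 1\cup\cdots\cup 1=\alpha_i$; the unit at positions outside $U$ evaluates to the unit. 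Hence the pullback is $\alpha_1\otimes\cdots\otimes\alpha_k$, lying in the top Künneth piece, so the composite
\[
H^\ast(A_U)\hookrightarrow H^\ast(\bdY^X)\to H^\ast(\bdY\Gam{})\twoheadrightarrow H^\ast(U)
\]
is the canonical isomorphism, and in particular the image of $H^\ast(\bdY^X)$ projects onto each summand $H^\ast(U)$.

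To upgrade summand-wise surjectivity to surjectivity of the whole map, examine the $V$-components of $c_U(\alpha)$ for $V\neq U$: the same pullback recipe shows they vanish unless $A_U\subseteq V$ and every block of $\gamma(V)$ meets $A_U$, which forces $|\gamma(V)|\leq|\gamma(U)|$. Processing summands in decreasing order of the rank $|\gamma(U)|$, the lift at rank $r$ therefore only perturbs summands of rank $\leq r$. Within a fixed rank one chooses the $A_U$ carefully — using that whenever $U_1\subsetneq U_2$ both have rank $r$ in $\Gamma$, the set $U_2\setminus U_1$ is non-empty and meets at least one block of $\gamma(U_2)$ — so that $A_{U_2}\not\subseteq U_1$; this makes the transition matrix triangular with identity diagonal under the inclusion order on $\{U\in\Gamma:|\gamma(U)|=r\}$, hence invertible. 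Solving the triangular system then lifts any target class. The main obstacle is precisely this coherent combinatorial choice of representatives, which amounts to a finite matching-type argument on the poset of rank-$r$ elements of $\Gamma$.
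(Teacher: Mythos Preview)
The paper's proof is more compressed than yours and takes a slightly different tack: it observes that for each $U\in\Gamma$ the composite
\[
H^\ast(\bdY^X)\longrightarrow H^\ast(\bdY\Gam{})\longrightarrow H^\ast(U)
\]
factors as restriction along $\bdY^U\hookrightarrow\bdY^X$ followed by projection to the top K\"unneth piece, and then notes that the pointed projection $\bdY^X\to\bdY^U$ gives a one-sided inverse to the inclusion, forcing this composite to be surjective onto each summand $H^\ast(U)$. The paper stops there and does not discuss the passage from summand-wise surjectivity to surjectivity of the full map --- the very step you correctly flag as needing work. Your observation that the $V$-component of $c_U(\alpha)$ vanishes unless $A_U\subseteq V$ and $A_U$ meets every block of $\gamma(V)$, forcing $|\gamma(V)|\leq|\gamma(U)|$, is right and is a genuine advance over the paper's argument.

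The gap is in your handling of the equality case $|\gamma(V)|=|\gamma(U)|$. Your triangularity claim under the inclusion order only controls cross-terms between \emph{comparable} elements: you arrange that $c_{U_2}(\alpha)$ has zero $U_1$-component whenever $U_1\subsetneq U_2$, but cross-terms can occur between \emph{incomparable} $U$ and $V$ of the same rank. Concretely, take $X=\{a,b,c,d\}$, let $U=\{a,b,c\}$ with $\gamma(U)=\bigl\{\{a,b\},\{c\}\bigr\}$ and $V=\{a,c,d\}$ with $\gamma(V)=\bigl\{\{a\},\{c,d\}\bigr\}$ (this is a valid diagonal complex). Then $A_U=\{a,c\}$ is a transversal of both $\gamma(U)$ and $\gamma(V)$, so $c_U(\alpha)$ has a nonzero $V$-component, yet $U$ and $V$ are incomparable under inclusion. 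Thus the transition matrix at rank $2$ is not triangular with respect to any linear extension of the inclusion order, and your invertibility claim does not follow. To make your approach work you would need to choose every $A_U$ so that it is a transversal of $\gamma(V)$ for \emph{no} other $V$ of the same rank --- comparable or not --- and you have not established that such a coherent system of choices exists. The ``finite matching-type argument'' you defer to is precisely where the proof is incomplete.
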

\begin{proof}
By the remarks following Theorem~\ref{thm_Hdc} the cohomology $H^\ast\bbrac{\YGam{},R}$ decomposes as
\begin{equation}\label{eq_leminj1} R\oplus \bigoplus_{U\in\Gamma} H^\ast\brac{\rcC(U)}. \end{equation}
And by a cohomological version of Proposition~\ref{prop_Hdecomp} the complex $H^\ast(\bdY^X,R)$ decomposes as
\begin{equation}\label{eq_leminj2} R\oplus \bigoplus_{A\in\Delta_X} H^\ast\Brac{\bigotimes_{i\in A} \rcC(Y_{l(i)})}. \end{equation}
The inclusion of $\bdY^U=\prod_{U'\in\gamma(U)}Y_{l(U')}$ into $\prod_{x\in U}Y_{l(x)}$ by diagonal maps induces a map
\begin{equation}\label{eq_leminj3} H^\ast\Brac{\bigotimes_{x\in U}C^\ast(Y_{l(x)})}\rightarrow H^\ast\Brac{\bigotimes_{U'\in \gamma(U)}C^\ast(Y_{l(U')})}\rightarrow H^\ast\brac{\rcC(U)}.\end{equation}
So each summand in~\eqref{eq_leminj1} is mapped to from a summand in~\eqref{eq_leminj2}.
It now only remains to note that since there is a projection from $\bdY^X$ to $\bdY^U$ providing a one-sided inverse to the inclusion, the maps~\eqref{eq_leminj3} on the individual summands are surjective.
\end{proof}
The product can now be calculated by considering the diagram
\begin{equation} \xymatrix{
 H^\ast\bbrac{\YGam{},R}\otimes H^\ast\bbrac{\YGam{},R} \ar[d] && H^\ast\bbrac{\bdY^X,R}\otimes H^\ast\bbrac{\bdY^X,R} \ar@{->>}[ll]\ar[d]\\
 H^\ast\bbrac{\YGam{},R} && H^\ast\bbrac{\bdY^X,R}.\ar@{->>}[ll]
}\end{equation} 
Given cocycles in $H^\ast\bbrac{\YGam{},R}$ to multiply, first lift along the surjective map to cocycles in $H^\ast\bbrac{\bdY^X,R}$, multiply them there, then map back to $H^\ast\bbrac{\YGam{},R}$.

\subsubsection{Homology of homotopy quotients}
Let $\Gaml{}$ be a $Z$-labelled diagonal complex and let $\bdY$ be a $Z$-tuple of pointed spaces.
Furthermore let $l':X\rightarrow Z'$ be a finer labelling, in that there exists a factorisation
\begin{equation}\xymatrix{X\ar[r]^{l'}\ar[dr]^{l} & Z'\ar[d]_{\pi} \\ & Z .}\end{equation}
The finer labelling could for example be the universal labelling.
Let $\bdH$ be a $Z$-tuple of groups such that $H_i$ acts on $Y_i$ by basepoint fixing homeomorphisms and let $\sym$ be a group of label fixing automorphisms of $\Gaml{}$.
Both the direct product 
\begin{equation} H:= \prod_{i\in Z'} H_{\pi(i)} \end{equation}
and $\sym$ then act on the diagonal complex product $\bdY\Gam{}$ together giving an action of the semidirect product $H\rtimes \sym$.
The action by $H_i$ on $\bdY\Gam{}$ is via a diagonal action
\begin{equation}\label{eq_Haction}
h.y_x = \begin{cases} (h.y)_x &\text{ if $l'(x)=i$ and} \\ y_x&\text{ otherwise.}
\end{cases}\end{equation}

Let $E_G$ be a contractible space on which $G$ acts properly and freely.
The homotopy quotient of an action of a group $G$ on a space $X$ may be given by
\begin{equation}
X\times_G E_G = (X\times E_G)/G,
\end{equation}
the quotient of the direct product of $X$ and $E_G$ where $G$ acts diagonally.
We are interested in the homotopy quotient of $\bdY\Gam{}$ by the action of $H\rtimes \sym$.
\begin{theorem}\label{thm_homotopyquotient}
Let $\Gaml{}$, $\bdY$, $\bdH$ and $\sym$ be as above.  Then the homology of the homotopy quotient of $\bdY\Gam{}$ by $H\rtimes\sym$ with coefficients in $R$ splits as
\begin{equation}
H_\ast(H\rtimes\sym, R) \oplus \bigoplus_{\sbrac{U}\in\Gamma/\sym} H_\ast\bigl(H\rtimes \text{Stab}_\sym(U),\rC(U)\bigr),
\end{equation}
where 
\begin{equation}
\rC(U):=\bigotimes_{U'\in\gamma(U)} \rC(Y_{l(U')})
\end{equation}
has the permutation action by $\text{Stab}_\sym(U)$.  The action of $H$ on $C_\ast\bigl(\bdY\Gam{}\bigr)$ restricts to $\rC(U)$.
\end{theorem}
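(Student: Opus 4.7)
My plan is to identify the homology of the homotopy quotient with a group hyperhomology, substitute the equivariant enhancement of the quasi-isomorphism~\eqref{eq_quasiiso} in order to replace $C_\ast(\bdY\Gam{},R)$ with a direct sum of smaller complexes, and then collect terms into $\sym$-orbits via Shapiro's lemma. Because $E(H\rtimes\sym)$ is a free contractible $H\rtimes\sym$-space, standard homological algebra gives
\begin{equation*}
H_\ast\bigl(\bdY\Gam{}\times_{H\rtimes\sym} E(H\rtimes\sym),R\bigr)\cong H_\ast\bigl(H\rtimes\sym,\,C_\ast(\bdY\Gam{},R)\bigr),
\end{equation*}
where the right-hand side denotes group hyperhomology. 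The remark following Theorem~\ref{thm_Hdc} then suggests replacing the coefficient complex by $R\oplus\bigoplus_{U\in\Gamma}\rC(U)$, provided the quasi-isomorphism can be chosen $H\rtimes\sym$-equivariantly.

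Granting the equivariance, both direct sums and Shapiro's lemma commute with hyperhomology, so
\begin{equation*}
H_\ast\bigl(H\rtimes\sym, C_\ast(\bdY\Gam{},R)\bigr) \cong H_\ast(H\rtimes\sym, R)\oplus \bigoplus_{[U]\in\Gamma/\sym} H_\ast\bigl(H\rtimes\sym, M_{[U]}\bigr),
\end{equation*}
where $M_{[U]}=\bigoplus_{U'\in[U]}\rC(U')$. The natural identification $(H\rtimes\sym)/(H\rtimes\text{Stab}_\sym(U))\cong[U]$ realises $M_{[U]}$ as the $H\rtimes\sym$-module induced from $\rC(U)$, since $H$ preserves each summand $\rC(U')$ while $\sym$ permutes these summands transitively on $[U]$ with stabiliser $\text{Stab}_\sym(U)$. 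Shapiro's lemma (in its chain-complex form) gives $H_\ast(H\rtimes\sym, M_{[U]})\cong H_\ast(H\rtimes\text{Stab}_\sym(U),\rC(U))$, yielding the claimed formula.

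The main obstacle is establishing the $H\rtimes\sym$-equivariance of~\eqref{eq_quasiiso}. The construction in the proof of Theorem~\ref{thm_Hdc} is inductive along the level filtration, using at each step the canonical splitting $C_\ast(Y_{l(U')},R)\cong R\oplus\rC(Y_{l(U')},R)$ together with gluing along diagrams of the form $\bdY^U\leftarrow\delta\bdY^U\rightarrow\YGam{n-1}$. The Künneth splitting is functorial for basepoint-preserving maps, hence compatible with each $H_i$ acting on $Y_i$ by basepoint-fixing homeomorphisms and hence with the diagonal action~\eqref{eq_Haction} of $H$ on the tensor factors of $\rC(U)$. A label-preserving automorphism $\sigma\in\sym$ sends $U\in\Gamma$ to $\sigma(U)$ with $\gamma(\sigma(U))=\sigma(\gamma(U))$ and respects $l$, so it merely permutes the summands indexed by $\Gamma$ and identifies $\rC(U)$ with $\rC(\sigma(U))$. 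Performing the level-by-level gluing one $\sym$-orbit at a time (and choosing the Künneth splittings compatibly across each orbit) produces an equivariant quasi-isomorphism. Once this compatibility is in place, the remainder of the argument is purely formal homological algebra.
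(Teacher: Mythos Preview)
Your proposal is correct and follows essentially the same route as the paper: identify the homology of the homotopy quotient with $H_\ast\bigl(H\rtimes\sym,\,C_\ast(\bdY\Gam{})\bigr)$, replace the coefficient complex by the decomposition~\eqref{eq_quasiiso} using its $H\rtimes\sym$-equivariance, split into $\sym$-orbits, recognise each orbit term as an induced module, and apply Shapiro's lemma. The only difference is one of emphasis: the paper asserts the equivariance of~\eqref{eq_quasiiso} directly from the fact that $\sym$ permutes the summands and $H$ preserves each one, whereas you spend a paragraph revisiting the level-by-level construction to justify it---this extra care is harmless but not strictly needed, since the map~\eqref{eq_quasiiso} is built from natural (basepoint-preserving) maps and is therefore automatically compatible with the actions.
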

\begin{proof}
The action of $\sym$ on $\Gamma$ permutes the $U\in\Gamma$ and so permutes the terms in the direct sum~\eqref{eq_quasiiso}, thus making the quasi-isomorphism~\eqref{eq_quasiiso} into a morphism of $\sym$-modules.
Since the action of $H$ on $C_\ast\bigl(\bdY\Gam{}\bigr)$ restricts to each term
\begin{equation}
\bigotimes_{U'\in\gamma(U)}\rC(Y_{l(U')}),
\end{equation}
the morphism from~\eqref{eq_quasiiso} is in fact a quasi-isomorphism of $H\rtimes\sym$-modules.
The homology of the homotopy quotient of $\bdY\Gam{}$ may be calculated by the homology of $H\rtimes \sym$ with coefficients in $C_\ast\bigr(\bdY\Gam{}\bigr)$ so we can compute it by calculating the homology of $H\rtimes\sym$ with coefficients in the decomposition
\begin{equation}\label{eq_decompHsym}
R\oplus\bigoplus_{U\in\Gamma}\biggl(\bigotimes_{U'\in\gamma(U)}\rC(Y_{l(U')})\biggr).
\end{equation}
The direct sum
\begin{equation}
\bigoplus_{U\in\Gamma}
\end{equation}
factors into
\begin{equation}
\bigoplus_{\sbrac{U}\in\Gamma/\sym} \bigoplus_{V\in \sbrac{U}}
\end{equation}
and so the $H\rtimes\sym$-module~\eqref{eq_decompHsym} decomposes over the sum
\begin{equation}
\bigoplus_{\sbrac{U}\in\Gamma/\sym}{}.
\end{equation}
Now we may concentrate on individual terms in this decomposition.  A term
\begin{equation}
\bigoplus_{V\in \sbrac{U}}\bigotimes_{U'\in\gamma(U)}\rC(Y_{l(U')})
\end{equation}
may be given by inducing the $H\rtimes\text{Stab}_\sym(U)$-module $\rC(U)$ up to $H\rtimes\sym$.
So by the Shapiro lemma the homology of the corresponding term is given by the homology of $H\rtimes\text{Stab}_\sym(U)$ with coefficients in $\rC(U)$.
This completes the proof.
\end{proof}
\begin{example}
A special case of this theorem is when each the diagonal complex comes from a full simplicial complex $\Delta_n=\Pf\sbrac{n}$ and so the diagonal complex product is the direct product.
With the trivial labelling this has automorphism group $\symn$.
So for a space $Y$ with $H$ the trivial group, the homotopy quotient is $Y^n\times_{\symn} E_{\symn}$.
When $Y$ is a classifying space for a group $G$, this homotopy quotient is a classifying space for the wreath product of $G$ with $\symn$.
The homology of these spaces was studied in~\cite{Leary1997}.
\end{example}

%%%%%%%%%%%%%%%%%%%%%%%%%%%%%%%%%%%%%%%%%%%%%%%%%%%%%%%%%%%%%%%%%%
\section{Asphericity and coset complexes}\label{section_cosetcomplexes}
A pointed space $Y$ is called \emph{aspherical} if $\pi_i(Y)=0$ for all $i\geq 2$.
Let $\Gam{}$ be a diagonal complex.
Then we call $\Gam{}$ \emph{aspherical} if for every labelling $l:X\rightarrow Z$ and every $Z$-tuple of aspherical pointed spaces $\bdY=(Y_i)_{i\in Z}$, the diagonal complex product $\YGam{}$ is aspherical.

When $\YGam{}$ is defined by a simplicial complex $A_\dt$ as in Example~\ref{example_simplicial} then $\YGam{}$ is aspherical if and only if $A_\dt$ is a flag complex.
For the definition of a flag complex and a discussion of the condition for RAAGs see~\cite{Charney2007b}.

It would be helpful to have a general combinatorial condition that would tell us when a diagonal complex was aspherical.
A major step in this direction would be an answer to the following.
\begin{question}
Let $\Gamma$ be a proper diagonal complex on a set $X$.  Suppose that $X\in\Gamma$, then is $\Gamma$ aspherical?
\end{question}
A positive answer to the next question would allow geometric techniques to be applied to the diagonal complex product of the circle. 
\begin{question}
Let $\Gamma$ be a proper diagonal complex on a set $X$ and let $Y_1=S^1$.  If $\YGam{}$ is aspherical, does this imply that $\Gamma$ is aspherical?
\end{question}

\subsection{Coset complexes}
Our approach to the problem of showing that examples of diagonal complexes are aspherical is to study certain coset complexes associated to natural subgroups.
Recall that the diagonal complex product of groups is defined as the colimit from a certain category $\bG$.
Each object $\cUi$ of $\bG$ determines a subgroup
\begin{equation}
\prod_{U_j\in\cUi} G_{l(U_j)} \hookrightarrow \GGam{},
\end{equation}
the fact that this morphism is injective can be seen by taking the composition
\begin{equation}
\prod_{U_j\in\cUi} G_{l(U_j)} \rightarrow \GGam{} \rightarrow \prod_{x\in X}G_{l(x)},
\end{equation}
which is a diagonal map and hence injective.
The category $\bG$ was defined as a subposet closed under meets of the partial partitions of $X$ under partial coarsening.
The meet in this poset corresponds to the intersection of subgroups in $\GGam{}$, so the collection of subgroups parametrised by $\bG$ is closed under intersections.

We now recall some material from the paper~\cite{Abels1993} on which our treatment of the coset complex is based.
Let $G$ be a group with a finite family of subgroups $\HH=\cbrac{H_j\mid j\in J}$ closed under intersection.
For such a group let $\hh$ be the set of cosets 
\begin{equation}
\coprod_{j\in J} G/H_j.
\end{equation}
Since $\HH$ is closed under taking intersections and since a non-empty intersection of cosets $g_1H_1 \cap g_2H_2$ is a coset of the intersection of the respective subgroups $u(H_1\!\cap\! H_2)$, the cosets $\hh$ are closed under taking non-empty intersections.
The set $\hh$ may be viewed as a cover of $G$.

Let $X$ be a set and $\UU$ be a covering of that set, we will assume that $\UU$ is closed under taking non-empty intersections.
Under inclusion $\UU$ forms a poset $(\UU,\subseteq)$, which has a nerve $N(\UU,\subseteq)$ which is a simplicial set where the $k$-simplices are chains
\begin{equation}
U_0\subseteq U_1\subseteq\ldots\subseteq U_{k}.
\end{equation}
We also define another simplicial set $\Xs(\UU)$ where the $k$-simplices are $(k+1)$-tuples $(x_0\clc x_k)$ of $X$ such that there is a $U$ in $\UU$ containing each $x_i$ for $i=1\clc k$.
Results from Section~1.6 and Theorem~1.4 from~\cite{Abels1993} imply that $\Xs(\UU)$ and $N(\UU,\subseteq)$ are homotopic.

\begin{definition}
Let $G$ be a group with a finite family of subgroups $\HH=\cbrac{H_j\mid j\in J}$ closed under intersection and let $\hh$ be the associated set of cosets which may be viewed as a cover of $G$.  Then the nerve $N(\hh,\subseteq)$ is called the \emph{coset complex}.
\end{definition}
The functor $B$ assigns to a group $G$ the simplicial set $B(G)$ which is a classifying space for $G$.
Recall that the set of $k$-simplices of $B(G)$ is the set $G^k$.
\begin{theorem}\label{thm_cosetcomplex}
Let $G$ be a group with a finite family of subgroups $\HH=\cbrac{H_j\mid j\in J}$ closed under intersection.
Let $(\hh,\subseteq)$ be the poset of cosets as above.
Then the colimit $\colim_{H\in\HH} B(H)$ is a classifying space for $G$ if and only if the coset complex $N(\hh,\subseteq)$ is contractible.
\end{theorem}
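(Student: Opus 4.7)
The strategy is to compare the natural inclusion $\colim_{H\in\HH} B(H) \hookrightarrow BG$ to its pullback along the universal cover $EG\to BG$. I will show this pullback is precisely the inclusion $\Xs(\hh)\hookrightarrow EG$. Since both vertical maps in the resulting pullback square are $G$-coverings, the bottom inclusion is a weak equivalence if and only if the top inclusion is; because $EG$ is contractible, this is in turn equivalent to $\Xs(\hh)$ being contractible. The already cited result that $\Xs(\hh)\simeq N(\hh,\subseteq)$ then finishes the argument.

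To identify the colimit, I use the standard simplicial model with $BG_k=G^k$ and $B(H)_k=H^k$. Because $\HH$ is closed under intersection and $B(H)_k\cap B(H')_k=H^k\cap (H')^k=B(H\cap H')_k$, the strict simplicial colimit is computed levelwise as the union $\bigcup_{H\in\HH}H^k$ inside $G^k$. Modelling $EG$ by $EG_k=G^{k+1}$ with quotient map $(g_0,\ldots,g_k)\mapsto(g_0^{-1}g_1,\ldots,g_{k-1}^{-1}g_k)$, a $k$-simplex of $EG$ projects into $\colim_{H\in\HH} B(H)\subseteq BG$ precisely when its consecutive ratios lie in some common $H\in\HH$, equivalently when $g_0,\ldots,g_k$ all belong to a common coset in $\hh$. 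This identifies the pullback as $\Xs(\hh)$, and in fact passing to $G$-orbits recovers $\colim_{H\in\HH} B(H)=\Xs(\hh)/G$.

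Both vertical maps of this pullback square are principal $G$-bundles with discrete fibre $G$. A five-lemma comparison of the long exact sequences of homotopy groups for the two coverings shows that the two inclusions are simultaneously weak equivalences, so the question of whether $\colim_{H\in\HH} B(H)$ classifies $G$ reduces to the contractibility of $\Xs(\hh)$, which by the Abels--Holz equivalence cited from~\cite{Abels1993} is equivalent to the contractibility of $N(\hh,\subseteq)$. The step requiring most care is the strict-colimit identification: closure of $\HH$ under intersection is used crucially, so that the levelwise set-theoretic unions assemble into the simplicial colimit and so that the pullback description of $\Xs(\hh)$ is well-behaved; without this hypothesis, neither the comparison with $\Xs(\hh)$ nor the pullback square would make sense as stated.
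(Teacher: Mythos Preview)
Your proof is correct and follows essentially the same route as the paper's own argument: both identify $\colim_{H\in\HH} B(H)$ with $\Xs(\hh)/G$ by computing levelwise inside $BG$ (respectively $EG$), observe that $\Xs(\hh)\to\Xs(\hh)/G$ is a free $G$-covering, and then invoke the Abels--Holz equivalence $\Xs(\hh)\simeq N(\hh,\subseteq)$. Your pullback-square and five-lemma phrasing makes the covering-space step more explicit than the paper's terse ``So $\colim_\HH B(H)$ is a classifying space for $G$ if and only if $\Xs(\hh)$ is contractible,'' but the content is the same.
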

\begin{proof}
For a group $H$, define $\Xs(H)$ to be the simplicial set with $k$-simplices the $(k+1)$-tuples of $H$.  The face maps are given by forgetting a coordinate and the degeneracy maps given by duplicating coordinates.
This has a free $H$-action given by 
\begin{equation}
h.(h_0\clc h_k) = ( hh_0\clc hh_k).
\end{equation}
This construction is functorial so if $H$ is a subgroup of $G$ then there is an inclusion of simplicial sets $\Xs(H)\hookrightarrow \Xs(G)$.
Define $\XsG{H}$ to be the $G$-orbit of the image of $\Xs(H)$ in $\Xs(G)$.  The induction notation is appropriate because both $H$ and $G$ actions are free.
Explicitly the $k$-simplices of $\XsG{H}$ consist of those $(k+1)$-tuples which lie in a single coset $gH$.
Note that if $H \leq H'$ then $\XsG{H} \subseteq \XsG{H'}$ and furthermore
\begin{equation}
\XsG{H_1} \cap \XsG{H_2} = \XsG{H_1\cap H_2}.
\end{equation}
So the colimit $\colim_{H\in\HH}\XsG{H}$ is a subspace of $\Xs(G)$.
This subspace is the span of all of the inclusions of $\XsG{H}$ for $H\in \HH$, so explicitly it consists of $(k+1)$-tuples $(g_0\clc g_k)$ for which there exists a coset $gH$ containing each $g_i$ for $i=0\clc k$.
But this is precisely the space $\Xs(\hh)$.  The space $\Xs(\hh)$ has a free $G$-action and we now take the quotient:
\begin{equation}
\Xs(\hh) / G = \colim_{H\in\HH} \XsG{H}\! / G \cong \colim_{H\in\HH} \Xs(H) / H.
\end{equation}
Since each $\Xs(H)$ is a contractible space with a free $H$-action this means that
\begin{equation}
\Xs(\hh) / G \cong \colim_{H\in\HH} B(H).
\end{equation}
So $\colim_\HH B(H)$ is a classifying space for $G$ if and only if $\Xs(\hh)$ is contractible.  However $\Xs(\hh)$ is homotopic to $N(\hh,\subseteq)$.  
We are done.
\end{proof}

\newcommand{\CCGG}{\text{CC}_\Gamma(\bdG)}
\newcommand{\CCGGf}{\text{CC}_\Gf(\bdG)}
This may be applied to a diagonal complex product as follows.
Let $\Gam{}$ be a $Z$-labelled diagonal complex and let $\bdG$ be a $Z$-tuple of groups.  As discussed above the category $\bG$ parametrises a family of subgroups of $\GGam{}$ which are closed under intersections.
This satisfies the conditions of Theorem~\ref{thm_cosetcomplex} and we write $\CCGG$ for the coset complex of this family.

%%%%%%%%%%%%%%%%%%%%%%%%%%%%%%%%%%%%%%%%%%%%%%%%%%%%%%%%%%%%%%%%%%%%%%%%

\section{The diagonal complex of forest posets}\label{section_forests}
We now apply the results of Sections~\ref{section_dcs} and~\ref{section_cosetcomplexes} to study the moduli space of cactus products defined in Section~\ref{section_moduli}.
To this end we describe a diagonal complex of forest posets, $\Gf$ which indexes the moduli space: $\MY\cong\bdY\Gf$.  
Then we prove Theorem~\ref{thm_aspherical} using the coset complex and the contractability of a variant of McCullough-Miller space from~\cite{Chen2005}.

\subsection{The diagonal complex $\Gf$}
Let $(P,\leq)$ be a finite poset.
The \emph{Hasse diagram} of $(P,\leq)$ is the directed graph with vertex set $P$ and an edge $\edij$ when $i<j$ are adjacent, that is, when $i\leq k\leq j$ implies that $k=i$ or $j$.
Conversely a directed graph without directed loops defines a poset given by setting $i<j$ when $\edij$ is an edge and then taking the transitive closure.
A directed graph is called a \emph{planted forest} if for each vertex the number of incoming edges is not greater than one, if there are no cycles and if the edge set is non-empty.
Under the transitive closure the planted forests are taken to posets $(P,\leq)$ with the \emph{underset condition}:
\begin{equation}\label{eq_underset}
 \text{for all }x\in P,\, \cbrac{y\mid y\leq x}\text{ is a total order.}
\end{equation}
In fact the correspondence is a bijection between the set of planted forests and the set of finite non-trivial posets satisfying~\eqref{eq_underset}.
Let $P_n$ be the set $\sbrac{n}$.  A poset $(P_n,\leq)$ defines a set
\begin{equation}\cbrac{(i,j)\mid i<j},\end{equation}
which is a subset of $X_n = P_n\times P_n-\Delta P_n$. 
We write $\Gf\subseteq \Pf X_n$ for the set of subsets of $X_n$ given by forest posets $(P_n,\leq)$.
An uppercase letter such as $U$ will be used to denote the poset $(P_n,\leq)$, the subset of $X_n$ and the corresponding planted forest.
This should not cause confusion.

We will next define a map $\gf:\Gf\rightarrow\Pf\Gf$.
For a forest poset $U$ take $i<j$, then choose a maximal path from $i$ to $j$
\begin{equation}i=i_0< i_1<\ldots < i_m = j.\end{equation}
The pair $(i,i_1)$ necessarily gives an edge $\ed{ii_1}\in U$, write $\mu_U(i,j) = \ed{ii_1}$.
Then $\mu_U$ is a map from the set $U\subseteq X_n$ to $E(U)$, the edge set of the planted forest.
Since the path is within the set $\cbrac{k\mid k\leq j}$ which is a total order, the path is unique and so $\mu_U$ is well-defined.
For an edge $\edij$ we have $\mu_U(i,j)=\edij$, so $\mu_U$ is surjective on the edge set $E(U)$.
The map $\mu_U$ defines a partition of $U$, each subset is given in the form $\mu_U^{-1}(\edij)$ for some edge of $U$, and this defines $\gamma(U)$.
For example
\begin{equation}\gf\brac{\xytree{1&\\2\ar[u]&4\\3\ar[u]\ar[ur]&}} = \cbrac{\xytree{1\\2\ar[u]},\xytree{1&2\\3\ar[u]\ar[ur]&},\xytree{4\\3\ar[u]}}. \end{equation}
\begin{proposition}
The pair $(\Gf,\gf)$ is a diagonal complex on $X_n$.  Furthermore this diagonal complex is proper.
\end{proposition}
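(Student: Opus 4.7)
The plan is as follows. Axioms (1) and (2) are quick: every singleton $\{(i,j)\}\in X_n^+$ is the pair set of the forest poset with single covering $i<j$, so $X_n^+\subseteq\Gf$; the map $\mu_U$ is well defined by the uniqueness of paths in a forest (which is precisely the underset condition) and surjects onto $E(U)$, so its fibres form a partition of $U$ with at least two blocks whenever $|U|>1$.

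For the simplicial condition, the useful unpacking of $U_A=\bigcup_{i\in A}U_i$ is that $(j,l)\in U_A$ iff the first edge of the $U$-path from $j$ to $l$ lies in $\{e_i : i\in A\}$. I would then introduce $j<_A l$ iff $(j,l)\in U_A$, verify transitivity by concatenating paths, inherit antisymmetry from $<_U$, and observe that the $<_A$-underset of any vertex sits inside the $<_U$-underset and is therefore a chain; this places $U_A$ in $\Gf$. For the refinement, each block $\mu_{U_A}^{-1}(f)$ of $\gf(U_A)$ is contained in a single $U_i$ because every pair in the block has its $U$-path begin with the unique active edge $e_i$ determined by $f$.

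Properness then follows from Proposition~\ref{prop_proper} once I check that $M_i := U-U_i = U_{[k]\setminus\{i\}}$, already in $\Gf$ by the simplicial condition, are precisely the maximal proper subsets of $U$ in $(\Gf,\subseteq)$. Maximality uses a chain argument: if $V\in\Gf$ with $M_i\subsetneq V\subseteq U$ and $(a_i,l)\in V\cap U_i$, then $(b_i,l)\in M_i\subseteq V$ (its $\mu_U$-image differs from $e_i$), so $a_i$ and $b_i$ both lie in the $V$-underset of $l$, and this chain forces $a_i<_V b_i$ given $V\subseteq U$; hence $(a_i,b_i)\in V$, and iterating over $l\geq_U b_i$ yields $U_i\subseteq V$, so $V=U$.

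The remaining direction — every maximal proper subset is some $M_i$ — reduces to: if $V\subsetneq U$ in $\Gf$, then $\mu_U(V)\subsetneq E(U)$. This I expect to be the main obstacle, and I would prove its contrapositive by induction on $|E(U)|$. At a $U$-maximal element $m$ with incoming edge $e=(a,m)$, the hypothesis $\mu_U(V)=E(U)$ forces $(a,m)\in V$ since any witness $l\geq_U m$ must equal $m$. Deleting all pairs with second coordinate $m$ from $U$ and $V$ to form $U^\dagger$ and $V^\dagger$ (both in $\Gf$, with $m$ isolated), the delicate step is to check that $\mu_{U^\dagger}(V^\dagger)=E(U^\dagger)$: for an edge $e'$ whose only original $V$-witness terminates at $m$, a chain argument in the $V$-underset of $m$ locates $a_{e'}$ strictly below $a$ in both $<_U$ and $<_V$, producing $(a_{e'},a)\in V$ as a non-$m$ witness. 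Induction then yields $V^\dagger=U^\dagger$; combined with $(a,m)\in V$ and transitivity in $V$ this reinstates every missing $m$-pair, forcing $V=U$.
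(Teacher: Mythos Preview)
Your proof is correct and tracks the paper's argument closely through axioms (1)--(3): the singleton check, the partition given by the fibres of $\mu_U$, the verification that $U_A$ is again a forest poset via transitivity and the inherited underset condition, and the refinement of $\gf(U_A)$ by $\{U_i\}_{i\in A}$ are all handled the same way in both.

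The one substantive difference is in the properness argument. The paper only verifies that each maximal face $U-\mu_U^{-1}(\edij)$ is a maximal proper subset of $U$ in $(\Gf,\subseteq)$ and then invokes Proposition~\ref{prop_proper}. But that proposition requires the maximal faces to be \emph{exactly} the maximal proper subsets, and the paper does not check the reverse inclusion (that every $V\subsetneq U$ in $\Gf$ is contained in some $U-\mu_U^{-1}(e)$, equivalently $\mu_U(V)\subsetneq E(U)$). You correctly isolate this as the missing step and supply it with an induction on $|E(U)|$, peeling off a $U$-maximal vertex $m$ and using the chain structure of the $V$-underset of $m$ to manufacture non-$m$ witnesses for each edge. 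That argument is sound and the step is genuinely needed; your treatment here is more complete than the paper's own.
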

\begin{proof}
For each element $(i,j)$ of $X_n=P_n\times P_n-\Delta P_n$, the tree $\xymatrix@1@=1pc{i\ar[r]&j}$ gives the poset given by $\cbrac{(i,j)}$ and so condition (1) in the definition of a diagonal complex is satisfied.
That $\gamma(U)$ is a partition of $U$ is immediate from the definition.
If $\gamma(U)=\cbrac{U}$ then $U$ is a forest with only one edge and so is contained in $X_n^+$, hence condition (2) is satisfied.

Next we must show that $\Gf$ contains the faces of each $U\in\Gf$.  Let $\cUi_{i\in E(U)} = \gamma(U)$ and suppose $Z\subseteq E(U)$.
We write $U_Z$ for $\bigcup_{i\in Z} U_i$.  Then $U_Z$ is the subset of $U$ consisting of $i<_Z j$ when the maximal path
\begin{equation}i=i_0< i_1<\ldots < i_m = j\end{equation}
has $\ed{ii_0}\in Z$.  To check that this is a poset we need only check that it is transitively closed.
This is immeditate because for $i<j<k$ the maximal path from $i$ to $j$ is a subpath of the maximal path from $i$ to $k$, so $i<_Z j$ implies that $i<_Z k$.
We now need to check that each underset in $U_Z$ is a total order.
So suppose that $i\leq_Z k$ and $j\leq_Z k$.
Since both $i<k$ and $j<k$ in $U$ and $U$ satisfies the underset condition, we may assume that $i<j$.
But since the maximal path joining $i$ and $j$ is a subpath of the path joining $i$ and $k$, then $i<_Z j$ and so $U_Z$ satisfies the underset condition and so is in $\Gf$.

For the remainder of condition (3) it is enough to show that each $V\in\gf(U_Z)$ is contained in some $U'\in\gf(U)$.
The set $V$ corresponds to some edge $\edij\in E(U_Z)$, let $\edik$ be the edge $\mu_U(i<j)$ and let $U'=\mu_U^{-1}(\edik)$.
For $i<l$ in $V$, we have $j\leq l$ and so the maximal path joining $i$ and $j$ in $U$ travels through $j$ and hence the maximal path joining $i$ and $j$ is a subpath.
So the maximal path joining $i$ and $l$ in $U$ starts with $\edik$ and hence $\mu_U(i,l) = \edik$ and so $(i,l)\in U'$.
We have shown that $V\subseteq U'$ and have completed the proof that $\Gam{\Forn}$ is a diagonal complex.

Finally to see that $(\Gf,\gf)$ is proper using Proposition~\ref{prop_proper} we must show that for each $U\in\Gf$ the maximal faces $U-\mu_U^{-1}(\ed{ij})$ are maximal subsets of $U$ in $\Gf$.
So suppose that 
\begin{equation}U-\mu_U^{-1}(\ed{ij}) < V \leq U \end{equation}
in $\Gf$. Let $(i,k)$ be an element in $V\cap\mu_U^{-1}(\ed{ij})$.  Unless $j=k$ (and so $\edij\in V$), we have that
\begin{equation}(j,k)\in U-\mu_U^{-1}(\ed{ij}) < V\end{equation}
and so by the underset condition
\begin{equation} (i,k),(j,k)\in V\in\Gf \Rightarrow (i,j)\in V. \end{equation}
Therefore we must have $V=U$ by transitivity and so $U-\mu_U^{-1}(\ed{ij})$ is maximal.
So by Proposition~\ref{prop_proper} the diagonal complex $\Gf$ is proper.
\end{proof}

We may draw the diagonal complex when $n=3$ as follows:
\begin{equation}\xy
(0,0)*{}="O"; (-7,12)*{}="A"; ( 7,12)*{}="B"; (14, 0)*{}="C"; ( 7,-12)*{}="D"; (-7,-12)*{}="E"; (-14, 0)*{}="F";
( 0,12)*{}="AB"; (10.5, -6)*{}="CD"; (-10.5, -6)*{}="EF";
"A"; "B" **\dir{-}; "C"; "D" **\dir{-}; "E"; "F" **\dir{-};
"A"; "EF" **\crv{"O"};
"B"; "CD" **\crv{"O"};
"C"; "AB" **\crv{"O"};
"D"; "EF" **\crv{"O"};
"E"; "CD" **\crv{"O"};
"F"; "AB" **\crv{"O"};
\endxy\end{equation}

\noindent To assign an $\sbrac{n}$-labelling to $\Gf$ we give the pair $(i,j)$ the label $i\in\sbrac{n}$.
By the definition of $\gf$ the $U'$ in $\gf(U)$ are corollas with base $i$ and so are labelled by $i$.
Hence this labelling of $X_n$ is compatible with the diagonal complex structure of $\Gf$, in fact it is the universal labelling.

Let $\bdY=(Y_1\clc Y_n)$ be an $n$-tuple of pointed spaces.
\begin{theorem}\label{thm_YGFFR}
The fundamental group $\pi_1(\bdY\Gf)$ is isomorphic to the Fouxe-Rabinovitch group $\FR(\pi_1(Y_1)\ala\pi_1(Y_n))$.
\end{theorem}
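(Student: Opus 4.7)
The plan is to compare presentations. As noted just before Theorem~\ref{thm_Gdc}, the functor $\pi_1$ preserves the colimits and finite products defining the diagonal complex product of pointed spaces, so $\pi_1(\bdY\Gf)\cong\bdG\Gf$ with $G_i=\pi_1(Y_i)$. It therefore suffices to exhibit a group isomorphism between $\bdG\Gf$ and $\FR(G)$ using the presentations from Theorem~\ref{thm_Gdc} and Proposition~\ref{prop_FRpres}.

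First I would set up the correspondence on generators. For each $(j,i)\in X_n$ with $j\neq i$, the singleton $U=\{(j,i)\}$ is a forest poset with $l(U)=j$, so the generators $g_{\{(j,i)\}}$ are parametrised by $g\in G_j$; the identification is $g_{\{(j,i)\}}\leftrightarrow\alpha_i^{g^{-1}}$. This is dictated by the cactus-product description of Section~\ref{section_moduli}: the forest poset $\{(j,i)\}$ corresponds to the two-vertex cactus tree with $i$ as root and $j$ attached via a point of $Y_i$ with homotopy class $g\in G_i$, and a loop in $Y_j$ based at the cactus basepoint then acts on $G_i$ by conjugation by $g^{-1}$. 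By iterated application of (rel4), every $g_U$ decomposes into a product of singleton generators, so matching the two presentations reduces to checking relations involving only these singletons.

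Next I would verify that the FR relations hold in $\bdG\Gf$. Relations \eqref{eq_FRrel1} and \eqref{eq_FRrel2} follow directly from (rel1) and (rel2) applied to singletons. For \eqref{eq_FRrel3}, given $i\neq k$, I would exhibit a forest poset $W\in\Gf$ containing the two relevant pairs such that $\{(j,i)\}$ and $\{(l,k)\}$ appear as distinct blocks of $\gf(W)$; a short case analysis over coincidences among $i,j,k,l$ shows such a $W$ always exists, and (rel3) at $W$ then supplies the commutator. Relation \eqref{eq_FRrel4} is the decisive case: for distinct $i,j,k$, take the chain forest poset $W=\{(j,i),(j,k),(k,i)\}$ representing $j<k<i$; a direct computation of $\mu_W$ gives $\gf(W)=\{\{(j,k),(j,i)\},\{(k,i)\}\}$, and combining (rel3) at $W$ with (rel4) applied to the corolla block $\{(j,k),(j,i)\}$ recovers exactly \eqref{eq_FRrel4}.

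Finally I would construct the inverse $\psi:\bdG\Gf\to\FR(G)$ by $\psi(g_U)=\prod_{(j,i)\in U}\alpha_i^{g^{-1}}$ for $g\in G_{l(U)}$; the product is well-defined because the factors commute by \eqref{eq_FRrel3}. Verifying (rel1), (rel2), and (rel4) in $\FR(G)$ is routine, while (rel3) for $U,V\in\gf(W)$ reduces, after expanding $\psi(g_U)$ and $\psi(h_V)$ into products of partial conjugations, to commutators vanishing by \eqref{eq_FRrel3} when the target indices are disjoint and by \eqref{eq_FRrel4} when a chain-and-corolla configuration in $W$ forces them to overlap. The main obstacle throughout is precisely \eqref{eq_FRrel4}: it is the only FR relation mixing two distinct partial conjugations inside a single commutator, and its bijective correspondence with (rel3) at the chain forest poset $j<k<i$ (together with one use of (rel4)) is the essential combinatorial bridge between the two presentations.
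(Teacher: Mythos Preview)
Your proposal is correct and follows essentially the same route as the paper: reduce to $\pi_1(\bdY\Gf)\cong\bdG\Gf$, then match the presentation of Theorem~\ref{thm_Gdc} against that of Proposition~\ref{prop_FRpres} via the bijection $g_{\{(j,i)\}}\leftrightarrow\alpha_i^{g^{-1}}$ (equivalently, the paper's $g_{ij}\leftrightarrow\alpha_j^{g_i^{-1}}$). The three forest configurations you single out for \eqref{eq_FRrel3} and \eqref{eq_FRrel4} are exactly the three dimension-$2$ elements of $\Gf$ the paper enumerates.

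The one place where the paper is more economical is in the inverse direction. Rather than defining $\psi$ and checking \eqref{eq_rel3} for \emph{every} $W\in\Gf$ (your final paragraph, which requires a slightly delicate case analysis on how the blocks of $\gf(W)$ can interact), the paper invokes the observation made in the proof of Theorem~\ref{thm_Gdc} that the levelwise construction only adds relations at dimension-$2$ amalgamations. Hence the presentation of $\bdG\Gf$ is already generated by the singleton $g_U$ subject to \eqref{eq_rel1}, \eqref{eq_rel2}, and the commutators coming from the three dimension-$2$ forests; once those three relations are identified with \eqref{eq_FRrel3} and \eqref{eq_FRrel4}, the two presentations coincide on the nose and no separate inverse needs to be built. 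Your approach works, but you can shorten it considerably by appealing to this reduction.
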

\begin{proof}
Using Theorem~\ref{thm_Gdc} the relations of a diagonal complex product of groups all come from either the summand groups or from the $U\in\Gf$ of dimension 2.  In $\Gf$ these are
\begin{equation}\label{eq_dim2forests}
\xytree{k&j\\i\ar[u]\ar[ur]&}, \quad
\xytree{k&j\\l\ar[u]&i\ar[u]}\quad\text{ and }\quad
\xytree{j\\i\ar[u]\\k\ar[u]}.\end{equation}
Let $G_i = \pi_1(Y_i)$, then a presentation for $\bdG\Gf$ has generators
\begin{equation}g_{ij} \end{equation}
for each $(i,j)\in X_n$ and $g\in G_i$.  The relations consist of
\begin{equation} g_{ij}h_{ij} = (gh)_{ij} \end{equation}
coming from the generating groups and
\begin{equation} \sbrac{g_{ik},g_{ij}}, \quad
\sbrac{g_{lk},g_{ij}} \quad\text{ and }\quad
\sbrac{g_{ki}g_{kj},g_{ij}},\end{equation}
corresponding respectively to the forests in~\eqref{eq_dim2forests}.
Equating $g_{ij}$ with $\alpha_j^{g_i^{-1}}\in\FR(G_1\ala G_n)$ gives an equivalence between this presentation and the presentation in Proposition~\ref{prop_FRpres}.
\end{proof}
\begin{theorem}\label{thm_MYYGFiso}
The moduli space of cactus products $\MY$ is isomorphic to $\YGf$.
\end{theorem}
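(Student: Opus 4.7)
The plan is to exhibit both $\MY$ and $\YGf$ as the same subspace of the ambient product $\prod_{(i,j)\in X_n} Y_i = Y_1^{n-1}\tlt Y_n^{n-1}$. By Proposition~\ref{prop_moduliembedding} the moduli space sits inside this product via the coordinates $y_{ij}$, and by Proposition~\ref{prop_dcproductcolimit} we have $\YGf\cong\YpGf=\bigcup_{U\in\Gf}\im i_U$ inside the same product (using the labelling $(i,j)\mapsto i$, so that $Y_{l(U')}=Y_i$ whenever $U'\in\gf(U)$ is rooted at $i$).

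The first step is to match combinatorial data. I would describe a bijection between rooted trees $t$ on $[n]$ and planted trees (connected forest posets with vertex set $[n]$) given by reversing every edge orientation, together with the easy check that the order convention $i<j \Leftrightarrow i$ lies strictly between $j$ and the root of~$t$ satisfies the underset condition precisely because in~$t$ each non-root vertex has a unique parent. Under this bijection the edges of $t$ become the elements $U'\in\gf(U_t)$ at the first level (corollas based at $i$), and the label $y_e\in Y_{t(e)}$ sits naturally in $Y_i$ where $i$ is the tree-target of $e$, which is exactly the poset-source of the reversed edge; this is the factor $Y_{l(U')}$.

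Next I would compare the two images coordinate by coordinate. Fix a cactus diagram $T$ over a rooted tree $t$ and let $U=U_t$ be the associated planted tree. For $(i,j)\in X_n$, the coordinate $y_{ij}$ defined in Proposition~\ref{prop_moduliembedding} is non-basepoint precisely when the path in $t$ from $j$ to $i$ is directed toward the root, which happens iff $i<j$ in the poset~$U$, i.e.\ iff $(i,j)\in U$; in that case $y_{ij}$ equals the label on the last edge of this path, and that edge is exactly the one indexing the face $\mu_U^{-1}(\ed{ik})\in\gf(U)$ containing $(i,j)$. Comparing with the formula for $i_U$ in~\eqref{eq_diagonalmap} shows that the image of $T$ coincides with $i_U(y)$ for the corresponding $y\in\bdY^U$. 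This gives $\im\MY\subseteq\YpGf$.

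For the reverse inclusion, let $U\in\Gf$ be arbitrary and let $y\in\bdY^U$. If $U$ is a planted tree it comes from a rooted tree $t$ as above. If $U$ is a disconnected planted forest, I would extend it to a planted tree $\widetilde{U}\supseteq U$ on $[n]$ by choosing any edges joining the components and assigning them basepoint labels; the resulting cactus diagram over the corresponding rooted tree $\widetilde{t}$ has image $i_{\widetilde{U}}(\widetilde{y})=i_U(y)$ in $\prod Y_i^{n-1}$ because the added basepoint coordinates collapse under $D^{\gf(\widetilde{U})}$. Hence every point of $\YpGf$ lies in the image of $\MY$. Finally, since both spaces inherit the subspace topology from the common CW ambient product, the bijection of underlying sets is a homeomorphism, and composing with the isomorphism $\YpGf\cong\YGf$ of Proposition~\ref{prop_dcproductcolimit} completes the argument.

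The main obstacle is purely bookkeeping: one must be careful that the orientation flip between cactus trees (edges toward the root) and planted forests (each vertex with at most one incoming edge), together with the labelling conventions $y_e\in Y_{t(e)}$ versus $y_{U'}\in Y_{l(U')}$, line up correctly, and that the congruence relation on cactus products identified in the wedge example of the paper is exactly what collapses different planted forests $U\subseteq\widetilde{U}$ to the same point of $\YpGf$ when added labels are basepoints.
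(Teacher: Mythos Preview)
Your plan is essentially the paper's own argument: embed both spaces in the ambient product via Propositions~\ref{prop_moduliembedding} and~\ref{prop_dcproductcolimit}, identify rooted trees with connected planted forests to obtain $\MY\subseteq\YGf$, and for the reverse inclusion extend a disconnected $U\in\Gf$ to a tree. Your coordinate-by-coordinate check that $\mu_U(i,j)$ picks out exactly the last edge of the path in $t$ is more explicit than the paper's one-line assertion, which is a virtue.

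One genuine caution on the reverse inclusion: the phrase ``choosing any edges joining the components'' is too loose, and with a bad choice the equality $i_{\widetilde U}(\widetilde y)=i_U(y)$ can fail. For instance, if $U$ has components $1<2$ and $3<4$ and you add the edge $\ed{23}$, then the corolla $\mu_U^{-1}(\ed{12})=\{(1,2)\}$ is strictly contained in $\mu_{\widetilde U}^{-1}(\ed{12})=\{(1,2),(1,3),(1,4)\}$; consequently the $(1,3)$-coordinate of $i_{\widetilde U}(\widetilde y)$ equals $y_1$ while that of $i_U(y)$ is $\ast$, so the two images differ unless $y_1=\ast$. The fix is to extend by making the roots of the other components children of one fixed root: then every corolla of $U$ is unchanged in $\widetilde U$, so $\gf(U)$ is literally a subpartition of $\gf(\widetilde U)$ and setting the new coordinates to basepoints gives exactly $i_U(y)$. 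The paper's proof glosses over this same point with the phrase ``therefore $\bdY^U$ is contained in $\bdY^V$'', so your plan is no worse off, but you should record the specific extension when you write out the details.
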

\begin{proof}
By Proposition~\ref{prop_moduliembedding}, the moduli space $\MY$ embeds into 
\begin{equation} Y_1^{n-1}\tlt Y_n^{n-1}.\end{equation}
We will now prove that this coincides with the embedding of $\YGf$ into $\bdY^{X_n}$, see Proposition~\ref{prop_dcproductcolimit}.
For each tree $t$, there is a planted forest $U$ which is isomorphic to $t$.  The spaces $\bdY^U$ and 
\begin{equation} t(\bdY):=\cbrac{y(T)\mid T\text{ is a cactus diagram over }t}\end{equation}
have the same image in $\bdY^X$.  Hence the moduli space is contained in $\YGf$.
Now for a planted forest $U\in\Gf$, there exists a maximal, connected planted forest $V$ containing $U$.
Each edge of $U$ is contained in $V$ and so each $U'\in\gf(U)$ is contained in some $V'\in\gf(V)$.
Therefore $\bdY^U$ is contained in $\bdY^V$ and since $V$ is a tree $t$, this shows that $\YGf$ is contained in the moduli space.
\end{proof}

\newcommand{\bGf}{\bG_\Forn}
\subsection{Bipartite planted forests}
The diagonal complex $\Gf$ defines a category $\bG_\Forn$, see Section~\ref{section_defcolimit}.
Recall from Section~\ref{section_partitions} that the set $\mathcal{X}_n$ of partial partitions of $X_n$ forms a poset $(\mathcal{X}_n,\leq_{pc})$ with the partial coarsening relation.
The category $\bGf$ is defined to be the subposet of $\mathcal{X}_n$ containing $\gf(\Gf)$ and closed under taking meets.
So to describe $\bGf$ we need to understand not only the forest posets but also their meets in $(\mathcal{X}_n,\leq_{pc})$.

A \emph{bipartite planted forest} on $\sbrac{n}$ is a planted forest $f$ with vertex set $\sbrac{n}\cup N$ such that
\begin{itemize}
\item all leaves and isolated vertices of $f$ (the vertices of valence $1$ and $0$ respectively), are in~$\sbrac{n}$,
\item each edge has one end in $\sbrac{n}$ and one end in $N$.
\end{itemize}
The second condition states that the underlying graph is bipartite, whilst the first says that each extremal vertex is in $\sbrac{n}$.
A planted forest is naturally directed by setting each edge to be directed away from the root of the tree in which it is contained.
As such each internal vertex $v$ has a unique \emph{parent} vertex $p(v)$ such that $\ed{p(v)v}$ is an edge.
Since each vertex in $N$ is internal this means that every such vertex has a unique parent, so $p$ restricts to a map $N\rightarrow \sbrac{n}$.

For each $v\in N$ define a subset $U_v\subset X_n$ by
\begin{equation}
U_v = \cbrac{(p(v), w)\mid\text{ there exists a directed path from $p(v)$ to $w$ in $f$}}.
\end{equation}
Then $\cbrac{U_v}_{v\in N}$ is a partial partition of $X_n$ associated to the bipartite planted forest~$f$.

Each planted forest $f$ defines a bipartite planted forest by setting $N=E$, the edge set of $f$.  The edges of the new forest are then given by $\ed{ve}$ and $\ed{ew}$ for each edge $e=\ed{vw}\in E$.
This is the barycentric subdivision of the forest.  The associated partial partition is equal to $\gf(f)$.

The partial partitions have an ordering by partial coarsening; $\cUi\leq_{pc}\cVj$ if for each $U_k\in\cUi$ there exists a subset of $\cVj$ whose union is $U_k$.
In this ordering the minimal elements above a partial partition $\cUi$ are given (a) by taking the union of two of the subsets, or (b) by removing one of the subsets.
There is an induced ordering of the bipartite planted forests.  Let $f$ be a bipartite planted forest and let 
\begin{equation}
\xytree{i &&&& j \\ &x\ar[ul]&&y\ar[ur]& \\ &&k\ar[ur]\ar[ul]&&}
\end{equation}
be a subtree, where $i,j,k\in\sbrac{n}$ and $x,y\in N$.
Now define a new forest $f_{xy}$ by identifying $x$ and $y$ to obtain $z$, the subtree now becomes
\begin{equation}\label{eq_Ytree}
\xytree{i && j \\ &z\ar[ur]\ar[ul]& \\ &k\ar[u]&}.
\end{equation}
Let us now consider the associated partial partitions.  If $w\in N$ is not $x$ or $y$ then $U_w$ is left unchanged by the operation of identifying $x$ and $y$.  This accounts for all subsets associated to $f_{xy}$ except $U_z$, this is equal to $U_x\cup U_y$.
We call the operation of identifying vertices $x$ and $y$ a \emph{horizontal folding}.
Now suppose that we instead have the following subtree
\begin{equation}
\xytree{i \\ x\ar[u] \\ j\ar[u] \\ y\ar[u] \\ k\ar[u]}.
\end{equation}
We again identify $x$ and $y$ to obtain $z$ and the subtree looks like the $Y$-shaped tree~\eqref{eq_Ytree}.
However this time the effect on partial partitions is to remove $U_x$.
We call this a \emph{vertical folding}.
For two bipartite planted forests $f$ and $f'$ we write $f' \leq f$ if $f'$ is obtained by a chain of foldings from $f$ to $f'$.  We then observe that the associated poset is the same poset structure as the one induced from the poset structure from the partial partitions.

\begin{proposition}
The set of partial partitions of $X_n$ associated to the bipartite planted forests is equal to the object set of the category $\bG_\Forn$.
\end{proposition}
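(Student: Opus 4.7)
My plan is to show that the set $\mathcal{B}$ of partial partitions of $X_n$ associated to bipartite planted forests coincides with $\Obj(\bGf)$. Since $\Obj(\bGf)$ is by definition the smallest subposet of $(\mathcal{X}_n,\leq_{pc})$ containing $\gf(\Gf)$ and closed under meets, it suffices to verify three claims: (i) $\gf(\Gf) \subseteq \mathcal{B}$, (ii) $\mathcal{B}$ is closed under meets in $\mathcal{X}_n$, and (iii) every element of $\mathcal{B}$ is a meet of elements of $\gf(\Gf)$.

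Claim (i) is immediate from the paragraph preceding the proposition: each planted forest $U \in \Gf$ gives rise to its barycentric subdivision, a bipartite planted forest (a \emph{spine} forest, meaning every internal vertex has exactly one child) whose associated partial partition is precisely $\gf(U)$. For claim (iii) I would exploit the folding correspondence observed just before the statement: the folding poset on bipartite planted forests matches the partial coarsening poset on their partial partitions, and every bipartite planted forest arises from a spine forest by a chain of foldings. By induction on the number of foldings I would realize each folding as a meet with an appropriately chosen spine forest — a horizontal fold merging siblings $x, y$ into a vertex $z$ with $U_z = U_x \cup U_y$ is captured by meeting with a spine forest whose partial partition already contains $U_x \cup U_y$ as one part, while a vertical fold deleting the $N$-vertex $x$ is captured by meeting with a spine forest whose subdivision omits $U_x$. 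Verifying that suitable spine forests exist in each case is routine from the combinatorics of planted forests.

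The main obstacle is claim (ii): the meet in $\mathcal{X}_n$ of two bipartite planted forest partial partitions must itself arise from a bipartite planted forest. Given $f_1, f_2 \in \mathcal{B}$, I would compute the meet via the equivalence-class construction from Section~\ref{section_productdiagonals} and then directly reconstruct the corresponding bipartite planted forest $f$: the non-trivial equivalence classes become the internal vertices $z$ of $f$, the parent $p(z)$ is forced to be the unique first coordinate $i \in \sbrac{n}$ shared by every pair in $U_z$, and the children of $z$ are read off from the second coordinates. The delicate points are showing that such an $i$ is well-defined (which uses the specific shape of each $U_v$ in a bipartite planted forest, namely that all of its pairs share the common first coordinate $p(v)$), that the resulting oriented graph on $\sbrac{n} \cup M$ is a forest with no cycles, and that the bipartite condition survives the merging of equivalence classes. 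Once (ii) is established, (i)--(iii) combine to give the desired equality.
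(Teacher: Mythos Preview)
Your three-part decomposition is sound, and (i) is handled identically in the paper. The paper's argument differs from yours chiefly at step (iii), and it is silent on your step (ii).

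For (iii) the paper does not induct on individual foldings. Instead it argues in one stroke that the partial partition $\{U_v\}_{v\in N}$ of a bipartite planted forest $f$ equals the meet of the partitions of \emph{all} maximal unfoldings of $f$. The key observation is that one can single out two particular unfoldings: a maximal \emph{horizontal} unfolding $f_h$, whose associated partition has the same union as $\{U_v\}$, and a maximal \emph{vertical} unfolding $f_v$, whose associated partition contains $\{U_v\}$ as a subpartition. If $\{V_j\}$ denotes the meet of all maximal unfoldings, then $\{U_v\}\leq_{pc}\{V_j\}$ trivially, while comparison with $f_h$ forces $\{V_j\}$ to have the same union as $\{U_v\}$ and comparison with $f_v$ forces $\{V_j\}$ to be a subpartition of a refinement containing $\{U_v\}$; together these pin $\{V_j\}$ down to $\{U_v\}$. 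This is shorter and cleaner than your inductive realisation of each fold as a meet with a spine forest, though your route is also viable.

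On the other hand, your step (ii) --- closure of $\mathcal{B}$ under meets --- is genuinely needed for the reverse inclusion $\Obj(\bGf)\subseteq\mathcal{B}$, and the paper does not address it; the written proof establishes only $\mathcal{B}\subseteq\Obj(\bGf)$ and asserts that this suffices. So in that respect your plan is more complete. Your sketch for (ii) has the right shape (each equivalence class $W_k$ of the meet does have a well-defined first coordinate, by the chain argument you indicate), but be aware that reconstructing the actual tree structure of the bipartite forest from the family $\{W_k\}$ requires more than reading off parents and second coordinates: you must verify the nesting conditions that make the resulting oriented graph a forest, and this is where the real work in (ii) lies.
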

\begin{proof}
To show this we need only show that every bipartite planted forest corresponds to the meet in $\mathcal{X}_n$ of partial partitions coming from forest posets.
So let $f$ be a bipartite planted forest with vertex set $\sbrac{n}\cup N$ and corresponding partial partition $\cbrac{U_v}_{v\in N}$.  Every vertex in $N$ of valence greater than two may be unfolded in a variety of ways.  Repeating this will give a forest in which each vertex in $N$ is bivalent and so is the barycentric subdivision of a planted forest with vertices in $\sbrac{n}$.  We claim that $\cbrac{U_v}$ is the meet of all the partial partitions corresponding to maximal unfoldings, this would complete the proof.

Let $\cVj$ be the meet of all the maximal unfoldings of $f$.  
So $\cVj$ is the greatest partial partition which is less than each unfolding, since $\cbrac{U_v}$ is less than each unfolding it must be less than (or equal to) $\cVj$.
Now consider the maximal $f_h$ given by horizontal unfoldings, the partial partition $\cWk$ of this has the same union as $\cbrac{U_v}$, the partial partition of $f$.
Since we have $\cbrac{U_v}\leq_{pc} \cVj\leq_{pc} \cWk$, the union of $\cVj$ must be the same as that of $\cbrac{U_v}$.

Now pick a maximal vertical unfolding $f_v$.
The partial partition $\cWk$ associated to $f_v$ contains $\cbrac{U_v}_{v\in N}$ as a subpartition.  
Since $\cbrac{U_v}\leq_{pc} \cVj\leq_{pc}\cWk$ this means that $\cVj$ also contains $\cbrac{U_v}$ as a subpartition.  However their unions are the same, so they must be equal.

Therefore $\cbrac{U_v}_{v\in N}$ is the meet and we are done.
\end{proof}
So we now have a combinatorial description of the category $\bGf$ in terms of bipartite planted forests and foldings.  To see how they give the diagonal complex product associated to $\Gf$ we need only describe the associated functor.
So let $\bdY=\cbrac{Y_1\clc Y_n}$ be an $n$-tuple of cocommutative coalgebras in a symmetric monoidal category $\cat{C}$.
We assign to a bipartite planted forest $f$ the product
\begin{equation}\label{eq_product}
\bdY_f:=\bigotimes_{x\in N} Y_{p(x)}
\end{equation}
Suppose that $f'$ is obtained by a horizontal folding identifying $x$ and $y$ in $N$ to give a vertex $z$.  Then $p(x)=p(y)=p(z)$ and so there is a map $\bdY_{f'}\rightarrow\bdY_f$ induced by the coalgebra map $Y_{p(z)}\rightarrow Y_{p(z)}\otimes Y_{p(z)}\cong Y_{p(x)}\otimes Y_{p(y)}$.

Now suppose that $f'$ is obtained by a vertical folding which identifies $x$ and $y$ to $z$ where $x<y$.  Then $p(z) = p(x)$ and the map $\bdY_{f'}\rightarrow \bdY_f$ is given by the unit map $Y_{p(z)}\otimes k \rightarrow Y_{p(x)}\otimes Y_{p(y)}$.
The diagonal complex product $\bdY\Gf$ is the colimit of this functor $F_\bdY:\bG_\Forn\rightarrow\cat{C}$.

\subsection{Based McCullough-Miller space}
In~\cite{McCullough1996} for each free product $G=G_1\ala G_n$ a space $MM_G$ on which $\smallcaps{OWh}(G):=\Wh(G) / \Inn(G)$ acts was described.  Their main result was that $MM_G$ was contractible and that the simplex stabilisers were of the form $\prod_{j=1}^k G_{i_j}$ for some $i_j$'s.
The equivariant spectral sequence of this action has been used in papers such as~\cite{Jensen2006} and~\cite{Berkove2010} to study the homology of $\smallcaps{OWh}(G)$ and hence of $\Wh(G)$.
The spaces $MM_G$ were defined using bipartite trees with an ordering by folding, along with a `marking' which consists of a basis of $G$.

In~\cite{Chen2005} a variant of McCullough-Miller space was studied.  In this variant the bipartite trees had a chosen basepoint $\ast$.
The space was denoted $L(G)$ and had an action of $\FR(G)$ rather than the outer version.
In was shown that the methods of~\cite{McCullough1996} could be applied to show that $L(G)$ is contractible.
Ofcourse based trees are equivalent to planted forests, one just removes the base vertex and the neighbouring vertices describe the planting.  So we may described $L(G)$ using the bipartite planted forests which we described above.

The following definition is a restatement of the definition of $L(G)$ from~\cite{Chen2005}.
\begin{definition}
An automorphism is \emph{carried by a bipartite planted forest} $f$ if it is in the subgroup generated by elements $\alpha^g_{U_x}:=\alpha^g_{i_1}\ldots\alpha^g_{i_k}$ where $x\in N$, $g\in G_{p(x)}$ and $U_x=\cbrac{(p(x),i_j)}$.
This subgroup is denoted $\bdG_f$ and is isomorphic to the term given by~\eqref{eq_product}.

A marked bipartite planted forest is a pair $([\alpha],f)$ where $[\alpha]$ is a left coset in $\FR(G)/\bdG_f$.
The marked bipartite planted forests form a poset where $([\alpha],f)\leq ([\beta],f')$ if $[\alpha]\subseteq[\beta]$ and $f$ is given by a chain of foldings of $f'$.
We denote this poset $M(G)$.
There is an action by $\FR(G)$ given by multiplication of the cosets on the left.
The space $L(G)$ is the geometric realisation of $M(G)$, the poset of marked bipartite planted forests.
\end{definition}

In the paper~\cite{Chen2005} it is shown that 
\begin{theorem}[(3.1~from~\cite{Chen2005})]\label{thm_contractibility}
The space $L(G)$ is contractible.
\end{theorem}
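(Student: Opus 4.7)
The plan is to adapt the contractibility argument of McCullough and Miller for the unbased space $MM_G$ to the based setting, following Chen, Glover and Jensen. The strategy has two parts: first, a Whitehead-style peak reduction on markings deformation-retracts $L(G)$ onto the subcomplex of minimum-complexity markings; second, this minimum-complexity subcomplex is shown to be contractible directly by exhibiting a conical structure.

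First I would fix a basis for $G=G_1\ala G_n$ and define a complexity $\|\alpha\|$ on $\FR(G)$ measuring the total reduced word length of $\alpha$ applied to this basis. For a marked bipartite planted forest $([\alpha],f)\in M(G)$, take the complexity to be the minimum of $\|\beta\|$ over representatives $\beta\in[\alpha]$; this minimum is attained, because $\bdG_f$ acts on $\FR(G)$ by partial conjugations of controlled effect on word length. The minimum-complexity markings are exactly those with $\alpha\in\bdG_f$, i.e.\ those whose coset is trivial. Call this subposet $M_0(G)\subseteq M(G)$ and its geometric realisation $L_0(G)\subseteq L(G)$.

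Next comes the peak reduction step. For each non-minimum marking $([\alpha],f)$ one would produce a canonical refinement $f''\geq f$ in the folding poset together with a canonical representative $\beta\in[\alpha]$ lying in $\bdG_{f''}$, such that a partial conjugation within $\bdG_{f''}$ strictly decreases $\|\beta\|$. Using the presentation in Proposition~\ref{prop_FRpres} and the Whitehead-type peak reduction for free products of Fouxe-Rabinovitch, as exploited in~\cite{McCullough1996}, one obtains a poset map $\rho:M(G)\to M(G)$ satisfying $\rho(x)\leq x$, fixing $M_0(G)$ pointwise, and whose iterates eventually land in $M_0(G)$. By the standard poset-theoretic observation that a monotone self-map dominated by the identity realises a deformation retraction, this yields $L(G)\simeq L_0(G)$.

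Finally I would show that $L_0(G)$ is contractible. The subposet $M_0(G)$ is isomorphic to the folding poset of bare bipartite planted forests on $[n]$ (no marking data). Contractibility would follow by observing that the single-vertex corolla $c_n$ (the bipartite planted forest with one internal vertex joined to all $n$ leaves, based at any of them) is comparable in the folding poset to every other bipartite planted forest on $[n]$ via an explicit zig-zag of unfoldings, or equivalently that the star of $c_n$ together with the map sending every vertex to $c_n$ realises $L_0(G)$ as a cone. The hard part will be making the peak-reduction map $\rho$ genuinely canonical and order-preserving: the Whitehead algorithm gives the existence of norm-reducing moves but not a priori a well-defined monotone map of posets, and one must impose e.g.\ a lexicographic tiebreak on Whitehead moves and verify compatibility with the coset structure and with the folding relation on $f$, so that $\rho$ descends unambiguously to marked forests and respects the face poset of $L(G)$.
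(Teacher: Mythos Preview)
The paper does not supply its own proof here: Theorem~\ref{thm_contractibility} is quoted directly from~\cite{Chen2005}, with only the remark that the contractibility argument there does not use the finiteness hypothesis on the~$G_i$. So there is nothing in the present paper to compare your proposal against; what you have written is an outline of the Chen--Glover--Jensen adaptation of the McCullough--Miller argument itself.

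Your two-step shape (retract by a length norm onto the identity-marked core, then show the core is contractible) is correct in spirit, but two points deserve correction. First, the cone vertex of $L_0(G)$ is not your corolla $c_n$: a corolla rooted at~$1$ corresponds to the partial partition $\bigl\{\{(1,2),\ldots,(1,n)\}\bigr\}$, which is incomparable under~$\leq_{pc}$ to, for instance, the partial partition $\bigl\{\{(2,3)\}\bigr\}$ coming from a single edge $2\to 3$. The genuine minimum of the unmarked poset is the \emph{nuclear} based tree (all of $[n]$ attached directly to the basepoint); a ``zig-zag of unfoldings'' is not a cone and does not by itself give contractibility. Second, neither~\cite{McCullough1996} nor~\cite{Chen2005} produces a single canonical monotone self-map $\rho$ with $\rho(x)\leq x$. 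Peak reduction guarantees the \emph{existence} of norm-decreasing Whitehead moves, but these are not canonical, and the obstruction you flag at the end is real and is not overcome in the literature by a tiebreak. The actual proof instead filters $L(G)$ by the norm and shows that each sublevel set deformation retracts onto the previous one: for every marking of a given norm one analyses the subposet of \emph{reductive} bipartite trees carrying a norm-decreasing move, and Higgins--Lyndon style arguments show this subposet is contractible. This is a combinatorial Morse-type argument rather than a global retraction, and it is precisely what sidesteps the canonicity problem you identified.
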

Note that although~\cite{Chen2005} only considers finite groups $G_i$, this is only applied from Section 4 onwards, so Theorem~3.1 is valid for all groups $G_i$.

Note also that in the definition above the poset $M(G)$ is precisely the poset of cosets in the family $\cbrac{\bdG_f}$ indexed by $\bG_\Forn$.
Therefore $L(G)$ is the coset complex $\CCGGf$ associated to the diagonal complex $\Gf$.
Therefore by Theorem~\ref{thm_contractibility} and Theorem~\ref{thm_cosetcomplex} we have our main theorem.

\setcounter{mainthm}{0}
\begin{mainthm}\label{thm_aspherical}
Let $\bdY=(Y_1\clc Y_n)$ be aspherical pointed spaces.  Then the space $\mathbf{Y}\Gf\cong \MY$ is also aspherical.
\end{mainthm}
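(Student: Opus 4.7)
My plan is to reduce the asphericity of $\YGf$ to the known contractibility of the based McCullough--Miller space $L(G)$, using the coset complex theorem of Section~\ref{section_cosetcomplexes} as the bridge. Write $G_i := \pi_1(Y_i)$, $G := G_1 \ala G_n$, and $\bdY^\circ = (BG_1,\ldots,BG_n)$ for CW~models of the classifying spaces.

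First I would show that $\YGf$ is homotopy equivalent to $\bdY^\circ\Gf$. The levelwise construction of Section~\ref{section_levelwiseproduct} exhibits $\YGf$ as an iterated pushout built from diagrams
\[
\bdY^U \;\leftarrow\; \delta\bdY^U \;\rightarrow\; \YGam{n-1}
\]
with $\delta\bdY^U \hookrightarrow \bdY^U$ a CW~cofibration. Since the $Y_i$ are aspherical CW~complexes, each is homotopy equivalent to $BG_i$, and an induction on the level using the homotopy invariance of pushouts along cofibrations gives $\YGf \simeq \bdY^\circ\Gf$. The remainder of the argument may then be carried out for $\bdY^\circ\Gf$.

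Next I would identify $\bdY^\circ\Gf$ as a colimit of classifying spaces of subgroups of $\FR(G)$. Under the bipartite planted forest description of $\bGf$, each object $\cUi$ comes from such a forest $f$, and the corresponding vertex of the defining diagram is $\prod_i BG_{l(U_i)}$, a classifying space for the subgroup $\bdG_f$ of $\GGam{\Forn}\cong\FR(G)$ identified in Section~\ref{section_forests}. The arrows of the diagram restrict on fundamental groups to canonical subgroup inclusions, hence
\[
\bdY^\circ\Gf \;\simeq\; \colim_{\cUi\in\bGf} B(\bdG_f).
\]
The family $\cbrac{\bdG_f}$ is closed under intersection, since meets in $\bGf$ correspond to intersections of subgroups. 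Theorem~\ref{thm_cosetcomplex} therefore yields that this colimit is a classifying space for $\FR(G)$ if and only if the coset complex $\CCGGf$ is contractible, and by the identification recorded just before the statement of Theorem~\ref{thm_aspherical} this coset complex is precisely $L(G)$, which is contractible by Theorem~\ref{thm_contractibility}.

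Combining the two steps, $\YGf \simeq \bdY^\circ\Gf \simeq B\FR(G)$ is aspherical, and Theorem~\ref{thm_MYYGFiso} transfers the conclusion to $\MY$. The main technical obstacle I anticipate is the homotopy invariance step: one must verify that each $\delta\bdY^U \hookrightarrow \bdY^U$ is genuinely a CW~cofibration and that the transfinite colimit defining $\YGf$ agrees with the corresponding homotopy colimit after replacement of the $Y_i$. Once this is settled, the rest is essentially bookkeeping against the bipartite forest description of $\bGf$ and the contractibility of $L(G)$.
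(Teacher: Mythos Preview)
Your proposal is correct and follows essentially the same route as the paper: identify the coset complex $\CCGGf$ with the based McCullough--Miller space $L(G)$, invoke Theorem~\ref{thm_contractibility} for its contractibility, and conclude via Theorem~\ref{thm_cosetcomplex}. The paper compresses this into a single sentence once those identifications have been made.

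The one place where you are more careful than the paper is the homotopy-invariance step. Theorem~\ref{thm_cosetcomplex} is stated for the specific simplicial model $B(H)$, so to apply it one must know that the colimit defining $\YGf$ is a homotopy colimit and hence insensitive to replacing each aspherical $Y_i$ by the bar model $BG_i$ (and each $\prod_i BG_{l(U_i)}$ by $B\bigl(\prod_i G_{l(U_i)}\bigr)$). The paper leaves this implicit; your plan to check it via the levelwise pushout description and the fact that $\delta\bdY^U\hookrightarrow\bdY^U$ is a CW inclusion is the right way to do it, and is routine. With that in hand the remainder is, as you say, bookkeeping against the bipartite forest description of $\bGf$.
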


%%%%%%%%%%%%%%%%%%%%%%%%
\section{The (co)homology of automorphism groups}\label{section_HAutFP}
In order to prove Theorem~\ref{thm_homology} it remains to read off the homology of $\MY$ from its presentation as a diagonal complex product $\bdY\Gf$.
Then to finish we study the action of $\bAut(\bdG)\rtimes\symG$ on the chain complex of $\FR(G)$ to give Theorem~\ref{thm_aut}; a calculation of the integral homology of the symmetric automorphism group of a free product.
\subsection{The Fouxe-Rabinovitch groups}
We start with the homology of the Fouxe-Rabinovitch groups, which by Theorems~\ref{thm_YGFFR} and~\ref{thm_aspherical} is given by $H_\ast(\YGf,R)$.
Let $U$ be a planted forest.  The monomial, see~\eqref{eq_monomial}, attached to $U$ is 
\begin{equation}\label{eq_forestmonomial}
x_1^{\out{1}}\ldots x_n^{\out{n}},\end{equation}
where $\out{i}$ is the number of outgoing edges of $U$ from $i$.
The following is a restatement of Theorem~5.3.4 of~\cite{Stanley2001}.
\begin{proposition}
The Hilbert-Poincar\'e series of $\Gf$ is
\begin{equation}h_{\Forn}(x_1\clc x_n) = (1+x_1+\ldots +x_n)^{n-1}.\end{equation}
\end{proposition}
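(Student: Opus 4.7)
The plan is to recognise $h_\Forn$ as a classical generating function for planted forests by child-counts, then derive it from the multivariate Cayley formula via a standard bijection between planted forests on $[n]$ and labelled trees on $\{0,1,\ldots,n\}$.

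First I would unpack the definition: an element $U\in\Gf$ is a planted forest on $[n]$ whose edges point from parent to child, and its attached monomial~\eqref{eq_forestmonomial} records the number of children at each vertex, so
\[ h_\Forn(x_1,\ldots,x_n) = \sum_F \prod_{i=1}^n x_i^{\out{i}}, \]
the sum ranging over planted forests $F$ on $[n]$ with $\out{i}$ the number of children of $i$ in $F$. Next I would invoke the bijection that adjoins an auxiliary vertex $0$ and joins $0$ to each root of $F$ to produce a labelled tree $T$ on $\{0,1,\ldots,n\}$; inversely, a tree on $\{0,\ldots,n\}$ is rooted at $0$ and $0$ is deleted. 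Under this bijection the neighbours of $i\in[n]$ in $T$ are its children in $F$ together with its unique parent in $\{0,\ldots,n\}$, so $\deg_T(i)-1=\out{i}$.

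The combinatorial heart of the argument is the multivariate Cayley (or Borchardt) formula
\[ \sum_{T} \prod_{j=0}^n y_j^{\deg_T(j)} = y_0 y_1 \cdots y_n \Brac{\sum_{j=0}^n y_j}^{n-1}, \]
where $T$ ranges over labelled trees on $\{0,\ldots,n\}$; this is most transparently proved by the Pr\"ufer-code bijection, under which each vertex $j$ appears in the Pr\"ufer sequence exactly $\deg_T(j)-1$ times. Dividing both sides by $y_0 y_1 \cdots y_n$, specialising $y_0=1$, and translating the exponents via the bijection above yields the claimed identity $(1+x_1+\cdots+x_n)^{n-1}$. The main obstacle is really just the Cayley identity itself, which is where all the combinatorial content sits; the translation between planted forests and trees on $[n+1]$ is routine bookkeeping.
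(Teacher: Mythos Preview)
Your proof is correct and follows essentially the same route as the paper's own argument: both adjoin a root vertex $0$ to convert planted forests on $[n]$ into labelled trees on $\{0,1,\ldots,n\}$, and both rely on the Pr\"ufer bijection to obtain the enumeration. The only cosmetic difference is that the paper writes out the Pr\"ufer algorithm explicitly and reads off the monomial directly from the resulting word, whereas you package that step as the multivariate Cayley identity and then specialise; the underlying combinatorics is identical.
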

We refer to~\cite{Stanley2001} for the full proof, but in the interests of self-containment we sketch part of the proof below.
\begin{proof}[(sketch)]
A proof of this proposition involves the theory of \emph{Pr\"ufer codes}.
The set of planted forests with vertex set $\sbrac{n}$ is shown to be in bijection with the set of words of length $n-1$ in the letters $\cbrac{x_0,x_1\clc x_n}$.
To give the word of a planted forest, attach a new vertex $0$ onto the forest by adding the edge $\ed{0i}$ for each of the roots $i$.
A \emph{leaf} is a vertex which is attached to only one other vertex.
A word $w=s_1\ldots s_{n-1}$ is produced for each tree $t$ as follows:
\begin{enumerate}
\item Let $t_1 = t$.
\item Suppose that $t_i$ is defined.  Take the leaf $v$ of maximal value in $t_i$ and define $s_{i}$ to be the value of its unique adjacent vertex.
\item If $i=n-1$ then stop.
\item Now define $t_{i+1}$ to be the tree created by removing $v$ and its unique adjoining edge from $t_i$.
\item Go back to step (2).
\end{enumerate}
The following diagram illustrates an example; the forest on the left is turned into the word $x_2x_1x_5x_0x_2$.
\begin{equation}\xytree{
& 4 & & *+[o][F-]{4} &&&&&&\\
*+[o][F-]{6}& 1 \ar[u] & 3 & 1\ar[u] & 3 & 1 & *+[o][F-]{3} & 1 && *+[o][F-]{1} \\
& 2\ar[ul]\ar[u] & 5\ar[u] & 2\ar[u] & 5\ar[u] & 2\ar[u] & 5\ar[u] & 2\ar[u] & *+[o][F-]{5} & 2\ar[u] \\
& 0\ar@{.>}[u]\ar@{.>}[ur] & & 0\ar@{.>}[u]\ar@{.>}[ur] & & 0\ar@{.>}[u]\ar@{.>}[ur] & & 0\ar@{.>}[u]\ar@{.>}[ur] & & 0\ar@{.>}[u] \\ 
& 2 & & 21 & & 215 & & 2150 & & *+[F-]{21502}
}\end{equation}
This algorithm may be reversed in order to build a tree from a word and this produces the bijection.  The element
\begin{equation}\label{eq_monoidsum} (x_0+x_1+\ldots+x_n)^{n-1}\end{equation}
in the free commutative ring on $\cbrac{x_0\clc x_n}$ is the sum over all words of length $n-1$.
The word associated with a planted forest may be turned into a monomial by viewing it in the free commutative monoid on $\cbrac{x_1\clc x_n}$ and setting $x_0=1$.
Since the number of $x_i$'s in a word is the number of outgoing edges of $i$, this monomial is the same as the monomial defined in~\eqref{eq_forestmonomial}.
Hence setting $x_0=1$ and viewing~\eqref{eq_monoidsum} in the free commutative ring gives the Hilbert-Poincar\'e series of the planted forests.
\end{proof}
The series for the planted forests is identical to the Hilbert-Poincar\'e series of the diagonal complex describing $(G_1\ala G_n)^{\times n-1}$ and so by Corollary~\ref{cor_hilb}:
\begin{mainthm}\label{thm_homology}
Let $\bdG$ be an $n$-tuple of groups and $G$ be the $n$-fold free product of these groups.
Then 
\begin{equation}H_\ast(\FR(G),R) \cong H_\ast(G^{n-1},R),\end{equation}
where $G^{n-1}$ is the $(n-1)$-fold direct product of $G$.
This also holds for the cohomology.
\end{mainthm}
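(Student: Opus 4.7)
The plan is to realise $\bdY\Gf$ as a classifying space for $\FR(G)$ and then read off the (co)homology by a Hilbert--Poincar\'e series comparison. First, fix for each $i=1,\ldots,n$ an aspherical classifying space $Y_i$ for $G_i$. By Theorem~\ref{thm_YGFFR} we have $\pi_1(\bdY\Gf)\cong\FR(G)$, and by Theorem~\ref{thm_aspherical} the space $\bdY\Gf$ is aspherical; thus it is a $K(\FR(G),1)$ and
$$H_\ast(\FR(G),R)\cong H_\ast(\bdY\Gf,R).$$

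Next I would evaluate the Hilbert--Poincar\'e series of the right-hand side via Corollary~\ref{cor_hilb}. Writing $y_i\in\mathcal{R}\ttt$ for the series of $H_\ast(Y_i,R)$ and substituting into the identity $h_\Forn(x_1,\ldots,x_n)=(1+x_1+\ldots+x_n)^{n-1}$ from the preceding proposition, the series of $H_\ast(\bdY\Gf,R)$ becomes $(y_1+\ldots+y_n-(n-1))^{n-1}$. On the other hand, the wedge $Y_1\vee\ldots\vee Y_n$ is a classifying space for $G$ whose homology series is $y_1+\ldots+y_n-(n-1)$, so by the K\"unneth formula the series of $H_\ast(G^{n-1},R)$ is also $(y_1+\ldots+y_n-(n-1))^{n-1}$. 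The cohomology case follows by the same argument using the cohomological version of Corollary~\ref{cor_hilb} (see Remark~\ref{rem_coHdc}).

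Since $\mathcal{R}\ttt$ records the full $\Tor^R$-product structure of indecomposable summands and not merely their ranks, equality of these two series produces a termwise isomorphism of the graded $R$-modules. The substantive content of the argument is the asphericity statement Theorem~\ref{thm_aspherical}; everything after that is bookkeeping in $\mathcal{R}\ttt$. A more concrete alternative to the series comparison, which avoids appealing to any Krull--Schmidt-type statement, would be to trace the Pr\"ufer-code bijection underlying the proposition so as to match each forest summand in the decomposition of Theorem~\ref{thm_Hdc} directly with the corresponding K\"unneth summand of $H_\ast((Y_1\vee\ldots\vee Y_n)^{n-1},R)$; this produces an explicit and natural isomorphism. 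The only step requiring real care is verifying this matching of tensor factors: a forest $U\in\Forn$ should be paired with the $(n-1)$-tuple whose $i$-th coordinate records the label of the edge issuing from the Pr\"ufer reduction of $U$ at step $i$, and the labelling convention $l(i,j)=i$ then makes the reduced homology factors coincide in each summand.
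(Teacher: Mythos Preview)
Your proposal is correct and follows essentially the same route as the paper: the paper's proof is the single observation that the Hilbert--Poincar\'e series of $\Gf$ coincides with that of the diagonal complex for $(G_1\ast\cdots\ast G_n)^{n-1}$, and then invokes Corollary~\ref{cor_hilb}. Your write-up is in fact more careful than the paper's, both in spelling out the substitution $x_i\mapsto y_i-1$ and in flagging the Krull--Schmidt issue behind passing from equality of series in $\mathcal{R}\ttt$ to an isomorphism of modules; your suggested alternative of tracing the Pr\"ufer bijection to match summands directly is a genuine improvement over what the paper records.
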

The Euler characteristic of $\Wh(\ZZ^{\ast n})\cong \FR(\ZZ^{\ast n})$, was computed in~\cite{McCammond2004}, and the homology was calculated in~\cite{Jensen2006}.  
Recalling that the Hilbert-Poincar\'e series of $\ZZ$ is $1+t$ we may reobtain these results.
\begin{corollary}
The Hilbert-Poincar\'e series of $H_\ast\bbrac{\Wh(F_n),\ZZ}$ is
\begin{equation} (1+tn)^{n-1}\end{equation}
and so the Euler characteristic is
\begin{equation} (1-n)^{n-1}.\end{equation}
\end{corollary}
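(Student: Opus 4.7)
The plan is to deduce the series $(1+tn)^{n-1}$ directly from Theorem~B. First I observe that for $G=F_n=\ZZ^{\ast n}$ each factor $G_i=\ZZ$ is abelian, so $\Inn(G_i)$ is trivial; the type~(b) generators in $\Wh$ beyond those of $\FR$ therefore contribute nothing, yielding $\Wh(F_n)=\FR(F_n)$. Hence it suffices to compute $H_\ast(\FR(F_n),\ZZ)$.

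Next I would apply Theorem~B with $G=F_n$ and $R=\ZZ$, identifying
\[H_\ast(\FR(F_n),\ZZ)\cong H_\ast(F_n^{n-1},\ZZ).\]
Since $F_n$ is the fundamental group of a wedge of $n$ circles, its integral homology is concentrated in degrees $0$ and $1$, with $H_0(F_n,\ZZ)=\ZZ$ and $H_1(F_n,\ZZ)=\ZZ^n$. In particular the homology is torsion-free, and its Hilbert--Poincar\'e series is $1+tn$.

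Because $H_\ast(F_n,\ZZ)$ is torsion-free, the K\"unneth formula contributes no $\Tor$ terms when iterated over the $(n-1)$-fold direct product, so the Hilbert--Poincar\'e series multiplies cleanly to $(1+tn)^{n-1}$, proving the first claim. The resulting series is a polynomial in $t$ with integer coefficients, so the Euler characteristic is obtained by substituting $t=-1$, yielding $(1-n)^{n-1}$.

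There is no substantive obstacle here: the argument is a direct consequence of Theorem~B combined with the well-known integral homology of $F_n$ and the torsion-free K\"unneth formula. The one point demanding explicit care is the identification $\Wh(F_n)=\FR(F_n)$, without which Theorem~B, which is phrased for $\FR$, would not immediately deliver the Whitehead statement.
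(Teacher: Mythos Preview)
Your proof is correct and matches the paper's approach. The paper likewise notes $\Wh(\ZZ^{\ast n})\cong\FR(\ZZ^{\ast n})$ and then remarks that the Hilbert--Poincar\'e series of $\ZZ$ is $1+t$; feeding this into Corollary~\ref{cor_hilb} (equivalently, applying Theorem~B and K\"unneth exactly as you do) yields $(1+tn)^{n-1}$, with the Euler characteristic obtained by setting $t=-1$.
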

\newcommand{\Zp}{\ZZ/(p)}
We may take this further by calculating the homology of $\Wh\bbrac{\Zp^{\ast n}}$.
The question of the cohomology of this group was posed by Jensen in~\cite{Mislin2008}.
\begin{corollary}
The Hilbert-Poincar\'e series of $H_\ast\bbrac{\Wh(\Zp^{\ast n}),\ZZ}$ is
\begin{equation}\label{eq_hilbzp}
1 + y\dfrac{1}{1+t}\sbrac{\brac{1+\dfrac{nt}{1-t}}^{n-1}\! - 1},
\end{equation}
where the $y$ coefficient encodes the number of $\Zp$-summands and the constant term encodes the number of $\ZZ$-summands.
\end{corollary}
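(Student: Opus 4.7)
The plan is to reduce to Theorem~\ref{thm_homology} applied to $G=\Zp^{\ast n}$, and then evaluate the Hilbert--Poincar\'e series of the resulting direct power using the multiplication rule in $\mathcal{R}\ttt$. First I would observe that since $\Zp$ is abelian, $\Inn(\Zp)=1$ for each factor, so $\bInn(\bdG)$ is trivial and $\Wh(\Zp^{\ast n})=\FR(\Zp^{\ast n})$. Theorem~\ref{thm_homology} then identifies the integral homology with $H_\ast(G^{n-1},\ZZ)$.

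Next I would record that a classifying space for $G$ is $\bigvee_{i=1}^{n} K(\Zp,1)$ and that the reduced integral homology of $\Zp$ is $\Zp$ in every odd positive degree. Writing $x_p$ for the class of $\Zp$, this gives
\begin{equation*}
h_G \;=\; 1 + n\cdot\frac{x_p\, t}{1-t^2}
\end{equation*}
in $\mathcal{R}\ttt$. The Hilbert--Poincar\'e series of $H_\ast(G^{n-1},\ZZ)$ is then the $(n-1)$-fold K\"unneth product $(1+na)^{n-1}$ with $a = x_p t/(1-t^2)$. Using the relation $x_p^2=(1+t)x_p$ from~\eqref{eq_tormult}, an easy induction gives $a^k = x_p\, t^k/\bigl((1-t)^k(1+t)\bigr)$ for $k\ge 1$; substituting this into the binomial expansion and pulling out the common factor $x_p/(1+t)$ collapses the sum to $\bbrac{1+nt/(1-t)}^{n-1} - 1$, yielding the stated formula with $y=x_p$.

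The only delicate point is verifying that iterated K\"unneth is really captured by \eqref{eq_tormult} with the correct numerical counts coming from expanding $(1+na)^{n-1}$. This reduces to checking $\Zp\otimes\Zp = \Zp = \Tor^{\ZZ}_1(\Zp,\Zp)$ and higher Tor vanishing, which gives $x_p\cdot x_p = (1+t)x_p$, together with the fact that the K\"unneth short exact sequence for $\ZZ$-coefficients splits so no extension problems appear. Beyond this bookkeeping step no substantive obstacle remains.
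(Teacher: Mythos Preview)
Your proof is correct and follows essentially the same route as the paper. The only cosmetic difference is that the paper substitutes the reduced series $y-1=\dfrac{ty}{1-t^2}$ of $\Zp$ directly into the diagonal-complex series $h_{\Gf}(x_1,\ldots,x_n)=(1+x_1+\cdots+x_n)^{n-1}$ via Corollary~\ref{cor_hilb}, whereas you first pass through Theorem~\ref{thm_homology} to $H_\ast(G^{n-1})$ and then take the K\"unneth product $h_G^{\,n-1}$; since all factors coincide these two substitutions yield the identical expression $\bigl(1+\frac{nyt}{1-t^2}\bigr)^{n-1}$, and from there the algebraic simplification using $y^k=y(1+t)^{k-1}$ is the same.
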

\begin{proof}
The homology of $\Zp$ is given by
\begin{equation}H_i(\Zp,\ZZ) = \begin{cases} \ZZ & \text{ if $i=0$,}\\
\Zp & \text{ if $i=2k-1$ for $k\geq 1$ and}\\
0 & \text{ if $i=2k$ for $k\geq 1$.}\\
\end{cases}\end{equation}
As a Hilbert-Poincar\'e series this is 
\begin{equation} 1 + yt + yt^3 + yt^5 + \ldots = 1+\dfrac{ty}{1-t^2}. \end{equation}
Putting this into the Hilbert-Poincar\'e series for $\Gf$ gives
\begin{equation}\brac{1+\dfrac{nyt}{1-t^2}}^{n-1} = 1+\sum_{k=1}^{n-1} \binom{n-1}{k} \brac{\dfrac{nyt}{1-t^2}}^k. \end{equation}
Using the identity $y^2 = y(1+t)$ from~\eqref{eq_tormult} to give $y^k = y(1+t)^{k-1}$ and simplifying we get the desired series~\eqref{eq_hilbzp}.
\end{proof}
\subsection{The pure automorphism groups}
Recall that the pure automorphism group, $\PAut(G)$ consists of those automorphisms that take an element $g\in G_i$ and give a conjugate $(g')^h$ of an element $g'\in G_i$ inside $G=G_1\ala G_n$ .
It is a semidirect product of $\FR(G)$ and $\bAut(\bdG) \cong \prod_i\Aut(G_i)$.
The action of $\bAut(\bdG)$ on $\FR(G)$ may be described as follows.  Let $\phi\in\Aut(G_i)$, then 
\begin{equation}(\alpha_k^{g_j})^\phi = \begin{cases}\alpha_k^{g_j} & \text{ if $j\neq i$ and }\\
\alpha_k^{\phi(g_j)} & \text{ if $j=i$.}\end{cases}\end{equation}
The group $\bAut(\bdG)$ has an action on the free product $G$ and hence on $G^{n-1}$ by the diagonal action, so we may form the group
\begin{equation}G^{n-1}\rtimes \bAut(\bdG).\end{equation}
\begin{theorem}\label{thm_pureauts}
The homology of $\PAut(G)$ decomposes as
\begin{equation}
H_\ast\bigl(\bAut(\bdG),R\bigr)\oplus\bigoplus_{F\in\Gf} H_\ast\bbrac{\bAut(\bdG),\rC(F)},\end{equation}
where
\begin{equation}
\rC(F) = \bigotimes_{\edij\in E(F)} \rC(Y_{i})
\end{equation}
and $Y_i$ is a classifying space for $G_i$ for each $i=1\clc n$.

Therefore 
\begin{equation} H_\ast\bbrac{\PAut(G),R}\cong H_\ast\bbrac{G^{n-1}\rtimes \bAut(\bdG),R}. \end{equation}
\end{theorem}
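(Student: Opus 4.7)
The plan is to deduce both statements from Theorem~\ref{thm_homotopyquotient} by realising $\PAut(G)$ and $G^{n-1}\rtimes\bAut(\bdG)$ as fundamental groups of Borel constructions on suitably chosen diagonal complex products.

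For the first decomposition, recall that $\bdY\Gf$ is a classifying space for $\FR(G)$ by Theorems~\ref{thm_YGFFR} and~\ref{thm_aspherical}, and that $\bAut(\bdG)=\prod_i\Aut(G_i)$ acts on it via the pointwise action~\eqref{eq_Haction}. My first step is to check that this action, pulled back through the isomorphism $\pi_1(\bdY\Gf)\cong\FR(G)$, coincides with the conjugation action of $\bAut(\bdG)$ on $\FR(G)$ inside $\SAut(G)$; this reduces to a direct calculation on the generators $g_{ij}\leftrightarrow\alpha_j^{g_i^{-1}}$ using the explicit conjugation formulas recorded in the preliminaries. Granting this compatibility, the Borel construction classifies $\PAut(G)=\FR(G)\rtimes\bAut(\bdG)$, and applying Theorem~\ref{thm_homotopyquotient} with $Z'=Z=[n]$, $\bdH=(\Aut(G_i))_i$ and trivial $\sym$ yields the stated sum, since for each $F\in\Gf$ every part $F'\in\gf(F)$ is the preimage of an edge $\edij$ and so carries label~$i$.

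For part~(2), I would realise $G^{n-1}$ itself as a diagonal complex product. Let $X'=[n]\times[n-1]$ with labelling $l'(i,k)=i$, and let $\Gamma'\subseteq\Pf X'$ be the simplicial complex of subsets meeting each fibre $[n]\times\{k\}$ in at most one point, equipped with $\gamma'(S)=S^+$. Unpacking the colimit definition from Section~\ref{section_defcolimit} identifies $\bdY\Gamma'$ with the wedge power $(Y_1\vee\ldots\vee Y_n)^{n-1}$, which is a classifying space for $G^{n-1}$, and the diagonal $\bAut(\bdG)$-action makes the associated Borel construction a model for $B(G^{n-1}\rtimes\bAut(\bdG))$. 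A second application of Theorem~\ref{thm_homotopyquotient} then produces the analogous decomposition
\[
H_\ast(\bAut(\bdG),R)\oplus\bigoplus_{\emptyset\ne S\in\Gamma'}H_\ast\bbrac{\bAut(\bdG),\rC(S)},
\]
with $\rC(S)=\bigotimes_{x\in S}\rC(Y_{l'(x)})$.

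What remains is a termwise comparison of the two sums, and this is where the combinatorial work sits. The Pr\"ufer-code bijection already exploited in the proof of Theorem~\ref{thm_homology} identifies $\Gf$ with $\Gamma'\setminus\{\emptyset\}$ by sending a planted forest to the word recorded when maximal leaves are successively stripped off. The main obstacle is to upgrade the bare bijection to a label-preserving one: the multiset $\{l(\edij):\edij\in E(F)\}$ must equal $\{l'(x):x\in S\}$, so that $\rC(F)$ and $\rC(S)$ coincide as $\bAut(\bdG)$-modules and not merely as chain complexes. Once this compatibility is checked, the two decompositions match summand by summand, which yields the isomorphism claimed in part~(2).
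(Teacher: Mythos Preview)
Your proposal is correct and follows essentially the same strategy as the paper: apply Theorem~\ref{thm_homotopyquotient} with $H=\bAut(\bdG)$ and trivial $\sym$ to the $\FR(G)$ side, do the same on the $G^{n-1}$ side, and match terms using the monomial-preserving combinatorics underlying the Pr\"ufer code.

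The only differences are ones of explicitness. You spell out the auxiliary diagonal complex $\Gamma'$ on $[n]\times[n-1]$ realising $(Y_1\vee\ldots\vee Y_n)^{n-1}$ and then invoke the Pr\"ufer bijection as a label-preserving map $\Gf\to\Gamma'$; the paper instead appeals directly to the equality of Hilbert--Poincar\'e polynomials (Proposition~6.1) together with the observation that $\rC(F)$, as an $\bAut(\bdG)$-module, depends only on the monomial $\prod_i x_i^{\out{i}}$ because the action is diagonal. These are two phrasings of the same fact: label-preservation of the Pr\"ufer code \emph{is} the statement that the bijection respects monomials, which in turn is exactly what makes the polynomials agree. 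Your version has the virtue of giving an explicit termwise matching; the paper's is shorter because it only needs equal multiplicities of each $H$-module isomorphism type, not a particular bijection. You also make explicit the check that the $\bAut(\bdG)$-action on $\bdY\Gf$ induces the correct conjugation action on $\FR(G)$, which the paper leaves implicit.
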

\begin{proof}
The diagonal complex product $\bdY\Gf$ is a classifying space for $\FR(G)$ and has an action of $\bAut(\bdG)$.
The homotopy quotient is then a classifying space for the pure automorphism group, $\PAut(G)=\FR(G)\rtimes\bAut(\bdG)$.

Let $\Gf$ have the labelling by $\sbrac{n}$ given by $l(i,j) = i$.
For each $i\in\sbrac{n}$ let $H_i=\Aut(G_i)$ be the automorphism group of $G$.
The direct product, $H$ of the $H_i$ acts on $\bdY\Gf$ via~\eqref{eq_Haction}.
This direct product is isomorphic to $\bAut(\bdG)$ and has the same action on $\bdY\Gf$.
Letting $\sym$ be the trivial group, then we may apply Theorem~\ref{thm_homotopyquotient} to calculate the homology of $\PAut(G)$, which gives the decomposition.

For the second part observe that for any planted forest $F\in\Gf$ with the same monomial as a term $U$ in the homology of $G^{n-1}$, the automorphism groups always act diagonally and so $\rC(F)\cong\rC(U)$ as $H$-modules.
Since the Hilbert-Poincar\'e polynomials are identical the corresponding sums of $H$-modules are isomorphic.
Hence applying Theorem~\ref{thm_homotopyquotient} for both groups $\PAut(G)$ and $G^{n-1}\rtimes\bAut(\bdG)$ we get the same homology.
\end{proof}

\subsection{The symmetric automorphism groups}\label{section_fullautos}
Let $n_1\clc n_k$ be the sizes of the isomorphism classes of $\bdG=(G_1\clc G_n)$.
Then write $\symG$ for the group
\begin{equation} \sym_{n_1}\tlt\sym_{n_k} \leq \sym_n \end{equation}
of symmetries of $\bdG$.
The symmetric automorphism group $\SAut(G)$ is the semidirect product of the pure automorphism group $\PAut(G)$ and $\symG$.

Remember that the diagonal complex $\Gf$ has a universal labelling $X_n\rightarrow\sbrac{n}$.  
So a map $L:\sbrac{n}\rightarrow Z=\sbrac{k}$ induces a labelling $l:X_n\rightarrow Z$.
Let $n_i$ be the size of the set $L^{-1}(i)$ for $i\in Z$.
We will call a planted forest $Z$-coloured if there is a vertex colouring by $Z$ and if for each $i\in Z$ there are $n_i$ vertices coloured $i$.
\begin{lemma}\label{lemma_symorbits}
The $\cbrac{1}$-labelled diagonal complex $\Gf$ carries an action of $\sym_n$.
The orbits are given by the isomorphism types of unlabelled planted forests.

The $Z$-labelled diagonal complex $\Gf$ carries an action of $\symG=\sym_{n_1}\tlt \sym_{n_k}$.
The orbits are given by the isomorphism types of $Z$-coloured planted forests.
\end{lemma}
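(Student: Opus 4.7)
The plan is to set up the action on $X_n$, push it to $\Gf$, and then match orbits with isomorphism classes by a straightforward unpacking.

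First I would define the $\sym_n$-action on $X_n=P_n\times P_n-\Delta P_n$ by the diagonal permutation $\sigma\cdot(i,j)=(\sigma(i),\sigma(j))$. This extends to an action on $\Pf X_n$ by taking images, and I would check that $\Gf\subseteq \Pf X_n$ is invariant: a subset $U\subseteq X_n$ lies in $\Gf$ exactly when it is the comparability set of a poset $(P_n,\le)$ satisfying the underset condition, and this condition is preserved by relabeling vertices. It remains to confirm that the partitioning map $\gf$ is $\sym_n$-equivariant. This is immediate from the construction: the map $\mu_U$ depends only on the poset structure of $U$ via maximal directed paths, and these paths are permuted by $\sigma$. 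Hence $\gf(\sigma\cdot U)=\sigma\cdot\gf(U)$, which says precisely that $\sym_n$ acts by automorphisms of the diagonal complex.

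Next I would identify the orbits in the unlabeled case. A planted forest with vertex set $\sbrac{n}$ is specified, as above, by the pair $(P_n,\le)$, and $\sigma\cdot U$ corresponds to the planted forest obtained by relabeling each vertex $i$ of $U$ by $\sigma(i)$. Two planted forests on $\sbrac{n}$ lie in the same $\sym_n$-orbit if and only if there is a bijection $\sbrac{n}\to\sbrac{n}$ carrying the edge set of one to the edge set of the other, which is exactly the definition of isomorphism of unlabeled planted forests. This gives the first statement.

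For the labelled statement I would observe that, by construction, $\symG=\sym_{n_1}\times\cdots\times\sym_{n_k}$ is the stabilizer in $\sym_n$ of the colouring $L:\sbrac{n}\to Z$, equivalently of the induced labelling $l:X_n\to Z$ given by $l(i,j)=L(i)$. Since $l$ is preserved, $\symG$ acts on $\Gf$ through label-fixing diagonal complex automorphisms, and $\sigma\cdot U$ is obtained from $U$ by relabelling vertices in a way that preserves colour. Two $Z$-coloured planted forests then lie in the same $\symG$-orbit iff there is a colour-preserving bijection of $\sbrac{n}$ carrying one edge set to the other, i.e.\ iff they are isomorphic as $Z$-coloured planted forests.

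The only even mildly subtle point is the compatibility of the action with $\gf$; everything else is essentially a rephrasing of the definitions. I do not expect any real obstacle, provided one is careful to phrase ``isomorphism type of ($Z$-coloured) planted forest'' as isomorphism of the underlying combinatorial structure on $\sbrac{n}$ modulo (colour-preserving) relabeling, which is exactly what the $\sym_n$- (resp.\ $\symG$-) orbit relation realises.
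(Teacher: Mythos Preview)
Your proof is correct and follows the same approach as the paper: define the permutation action on $X_n$, observe it restricts to $\Gf$, and check compatibility with the labelling. In fact your write-up is more complete than the paper's, which leaves both the $\gf$-equivariance and the orbit identification implicit.
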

\begin{proof}
The symmetric group $\sym_n$ has the permutation action on $\sbrac{n}$.  This gives an action on $X_n=\cbrac{(i,j)\mid i\neq j}$.
The induced action on $\Pf X_n$ restricts to $\Gf$:  
an element $\sigma\in\sym_n$ takes the edge $\edij$ of $U\in\Gf$ to $\ed{\sigma(i)\sigma(j)}$ of $\sigma(U)$.

The action of $\symG$ on $\sbrac{n}$ fixes the morphism $L:\sbrac{n}\rightarrow Z$ and hence the action on $\Gf$ satisfies
\begin{equation} l(\sigma(U)) = l(U), \end{equation}
for all $\sigma\in\symG$ and $U\in \Gf$.
\end{proof}
\begin{example}
Let $L:\cbrac{1,2,3}\rightarrow\cbrac{\circ,\dt}$ be defined by $l(1)=l(2)=\circ$ and $l(3)=\dt$.
Then the orbits are enumerated by the $Z$-coloured planted forests as follows:
\begin{equation}\smtree{\dt\\\circ\ar[u]\\\circ\ar[u]}\quad\smtree{\circ\\\dt\ar[u]\\\circ\ar[u]}\quad\smtree{\circ\\\circ\ar[u]\\\dt\ar[u]}\quad
\smtree{\circ&\circ\\\dt\ar[u]\ar[ur]&}\smtree{\dt&\circ\\\circ\ar[u]\ar[ur]&}
\smtree{\circ&\\\dt\ar[u]&\ar@{}[l]|-*{\circ}}\smtree{\dt&\\\circ\ar[u]&\ar@{}[l]|-*{\circ}}\smtree{\circ&\\\circ\ar[u]&.\ar@{}[l]|-*{\dt}}
\end{equation}
\end{example}
\begin{mainthm}\label{thm_aut}
Let $G=G_1\ala G_n$ where $G_i$ is neither freely decomposable nor infinite cyclic.  Then the homology of the automorphism group of $G$ is given by
\begin{equation}
H_\ast\bigl(\Aut(G),R\bigr) \cong H_\ast\bigl(\bAut(\bdG)\!\rtimes\!\symG,R\bigr)\oplus\!\!\!
\bigoplus_{f\in\smallcaps{Forests}_Z}\!\!\! H_\ast\brac{\bAut(\bdG)\!\rtimes\!\Aut(f), \rC(f)},
\end{equation}
where $\FZ$ is the set of $Z$-coloured planted forests and $\rC(f)$ is given by
\begin{equation} \bigotimes_{i\in\sbrac{n}} \rC(Y_{l(i)},R)^{\otimes \out{i}},\end{equation}
where $Y_i$ is a classifying space for $G_i$ for each $i=1\clc n$.
\end{mainthm}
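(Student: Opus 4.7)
The plan is to parallel the proof of Theorem~\ref{thm_pureauts} by forming a homotopy quotient of $\bdY\Gf$, now by the larger group $\bAut(\bdG) \rtimes \symG$, and then applying Theorem~\ref{thm_homotopyquotient}. Under the hypothesis that no $G_i$ is freely decomposable or infinite cyclic the identification $\Aut(G) = \SAut(G)$ noted in the introduction, together with equation~\eqref{eq_semidirect}, realises $\Aut(G)$ as the semidirect product $\FR(G) \rtimes \bAut(\bdG) \rtimes \symG$. Since $\bdY\Gf$ is a classifying space for $\FR(G)$ by Theorems~\ref{thm_YGFFR} and~\ref{thm_aspherical}, the homotopy quotient of $\bdY\Gf$ by $\bAut(\bdG) \rtimes \symG$ is a classifying space for $\Aut(G)$.

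Next I would set up the labelling required for Theorem~\ref{thm_homotopyquotient}. Take $Z$ to be the set of isomorphism classes of the $G_i$ and let $l:\sbrac{n}\to Z$ record the class; composing with the universal $\sbrac{n}$-labelling of $\Gf$ gives a $Z$-labelling factoring as in~\eqref{eq_factorisation}. Setting $H_i = \Aut(G_i)$ identifies the product $H = \prod_{i\in\sbrac{n}} H_{l(i)}$ with $\bAut(\bdG)$, acting diagonally on $\bdY\Gf$ via~\eqref{eq_Haction}; and by Lemma~\ref{lemma_symorbits} the group $\symG$ acts on $\Gf$ fixing the labelling. One must check that these two actions combine into the semidirect product action of $\bAut(\bdG)\rtimes\symG$ realising the pullback of the $\Aut(G)$-action on $\FR(G)$; this follows from the conjugation formulas for $\symG$ and $\bAut(\bdG)$ on partial conjugations at the end of Section~1.1. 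Theorem~\ref{thm_homotopyquotient} then yields
\begin{equation*}
H_\ast(\Aut(G), R) \cong H_\ast\bbrac{\bAut(\bdG)\rtimes\symG, R} \oplus \bigoplus_{\sbrac{U}\in\Gf/\symG} H_\ast\bbrac{\bAut(\bdG)\rtimes\text{Stab}_{\symG}(U),\, \rC(U)}.
\end{equation*}

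Finally I would translate the indexing set and the coefficient modules into the form of the theorem. By Lemma~\ref{lemma_symorbits}, orbits $\Gf/\symG$ are in bijection with isomorphism classes of $Z$-coloured planted forests $f\in\FZ$, and the $\symG$-stabiliser of a representative is exactly the $Z$-coloured automorphism group $\Aut(f)$. Unpacking $\rC(U)=\bigotimes_{U'\in\gf(U)}\rC(Y_{l(U')})$ and noting that the parts of $\gf(U)$ are naturally indexed by the edges of the planted forest, with the part at an edge $\edij$ carrying the label $l(i)$, one groups by source vertex to obtain
\begin{equation*}
\rC(U) \cong \bigotimes_{i\in\sbrac{n}} \rC\brac{Y_{l(i)},R}^{\otimes \out{i}},
\end{equation*}
which is precisely $\rC(f)$ in the statement.

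The main obstacle is the equivariance check in the second step: verifying that the chain-level action of $\text{Stab}_{\symG}(U)=\Aut(f)$ on $\rC(U)$ supplied by Theorem~\ref{thm_homotopyquotient} coincides with the permutation action on tensor factors induced by the permutations of outgoing edges at each vertex of $f$, and that this is compatible with the diagonal $\bAut(\bdG)$-action so that the full semidirect product $\bAut(\bdG)\rtimes\Aut(f)$ acts as claimed. This reduces to tracking that the quasi-isomorphism~\eqref{eq_quasiiso} underlying Theorem~\ref{thm_homotopyquotient} is natural in both the labelled permutations of $X_n$ induced by $\symG$ and the factorwise action of $\bAut(\bdG)$ on each summand.
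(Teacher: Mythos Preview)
Your proposal is correct and follows essentially the same route as the paper: set up the $Z$-labelling and the universal $\sbrac{n}$-labelling on $\Gf$, apply Theorem~\ref{thm_homotopyquotient} to the homotopy quotient of $\bdY\Gf$ by $\bAut(\bdG)\rtimes\symG$ (which is a $B\Aut(G)$ under the hypothesis), and then use Lemma~\ref{lemma_symorbits} to identify the orbits and stabilisers. Your additional care in unpacking $\rC(f)$ and checking the equivariance of the $\bAut(\bdG)\rtimes\Aut(f)$-action is appropriate but does not depart from the paper's argument.
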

\begin{proof}
There are two labellings of $\Gf$, the labelling by $Z$ coming from the isomorphism type of the $G_i$ and the universal labelling by $\sbrac{n}$.
The group $\symG$ acts on $\Gf$ preserving the first labelling.
Thus we may apply Theorem~\ref{thm_homotopyquotient} to compute the homology of the homotopy quotient of the action of $\bAut(\bdG)\rtimes\symG$ on $\bdY\Gf$.
This homotopy quotient is a classifying space for $\SAut(G)$.

By Lemma~\ref{lemma_symorbits} the orbits of $\Gf$ are given by $\smallcaps{Forests}_Z$ and the stabiliser of a $Z$-coloured forest $f$ is $\Aut(f)$.
\end{proof}
\begin{remark}[cohomological version of Theorem~\ref{thm_aut}]
Precisely the same arguments may be used to prove that given the hypotheses of Theorem~\ref{thm_aut} the cohomology of $\Aut(G)$ is given by
\begin{equation}
H^\ast\bigl(\Aut(G),R\bigr) \cong H^\ast\bigl(\bAut(\bdG)\!\rtimes\!\symG,R\bigr)\oplus\!\!\!\!
\bigoplus_{f\in\smallcaps{Forests}_Z}\!\!\!\! H^\ast\brac{\bAut(\bdG)\!\rtimes\!\Aut(f), \rcC(f)},
\end{equation}
where $\rcC(f)$ is given by
\begin{equation} \bigotimes_{i\in\sbrac{n}} \rcC(Y_{l(i)})^{\otimes \out{i}}.\end{equation}
\end{remark}
\begin{remark}
Let $G = G_1^{\ast n_1}\ala G_k^{\ast n_k}$ with each $G_i$ neither freely decomposable nor $\ZZ$, then $\Aut(G)\cong\PAut(G)\rtimes\symG$, so we have that the Euler characteristic of $\Aut(G)$ is
\begin{equation}\dfrac{1}{(n_1)!\ldots (n_k)!} \bbrac{1+n_1\chi(G_1)+\ldots+n_k\chi(G_k)}^{n-1}\prod_{i=1}^k \chi\bbrac{\Aut(G_i)}^{-n_i},\end{equation}
which is the same as that of $(G^{n-1}\rtimes \bAut(\bdG) )\rtimes\symG$.
However their integral homologies are not necessarily the same due to the presence of the groups $\Aut(f)$.
Hence the pattern 
\begin{align}
H_\ast(\FR(G),R) &\cong H_\ast(G^{n-1},R), \label{eq_isom1}\\
H_\ast(\Wh(G),R) &\cong H_\ast(G^{n-1}\rtimes \bInn(\bdG),R), \label{eq_isom2}\\
H_\ast(\PAut(G),R) &\cong H_\ast(G^{n-1}\rtimes\bAut(\bdG),R)\label{eq_isom3}
\end{align}
is broken when non-pure symmetric automorphisms are present.
\end{remark}
\begin{remark}\label{rem_discrepancy}
There is a discrepancy between the three isomorphisms we obtain in~\eqref{eq_isom1},~\eqref{eq_isom2} and~\eqref{eq_isom3} and the results of~\cite{Jensen2007euler} and~\cite{Berkove2010}.
Calculating the Euler characteristic we find that 
\begin{equation}\chi(\FR(G))=\chi(G^{n-1}),\end{equation}
whereas in~\cite{Jensen2007euler} it is calculated to be 
\begin{equation}\chi(G^{n-1}).\prod_i\chi(\Inn(G_i))^{-1}{}_.\end{equation}
Across all of the results there is a factor of $\chi(\bInn(\bdG))$ difference.
There is an analogous difference with the results of~\cite{Berkove2010}.
We attribute this to a misquoting of Proposition 5.1 of~\cite{McCullough1996} in each of the papers.
\end{remark}
\begin{example}
Let $G_i=\ZZ$ and consider the group $\SFR(F_n) \cong \FR(F_n)\rtimes \sym_n$.  Using Theorem~\ref{thm_aut} we may calculate the homology as
\begin{align}
 H_\ast\bbrac{\SFR(F_n),R} &= H_\ast(\sym_n,R)\oplus \bigoplus_{f\in\smallcaps{Forests}} H_\ast\bbrac{\Aut(f),\rC(f)}\\
 &\cong H_\ast(\sym_n,R)\oplus \bigoplus_{f\in\smallcaps{Forests}} t^{|E(f)|} \,H_\ast\bbrac{\Aut(f), R_f},
\end{align}
where $t$ is of degree one and $R_f$ is the one-dimensional `determinant' $\Aut(f)$-module: a factor of $-1$ is introduced every time two edges of $f$ are swapped.
\end{example}
\bibliographystyle{plain}
\bibliography{library}

\end{document}